\newtheorem{theorem}{Theorem}[section]
\newtheorem*{theorem*}{Theorem}
\newtheorem{assumption}{Assumption}
\newtheorem{corollary}[theorem]{Corollary}
\newtheorem{remark}[theorem]{Remark}
\newtheorem{lemma}[theorem]{Lemma}
\newtheorem{definition}[theorem]{Definition}
\newtheorem{proposition}[theorem]{Proposition}
\newtheorem{example}[theorem]{Example}
\newtheorem*{proposition*}{Proposition}
\newcommand{\R}{\mathbb{R}}
\newcommand{\be}{\begin{eqnarray*}}
\newcommand{\ee}{\end{eqnarray*}}
\newcommand{\ba}{\begin{align*}}
\newcommand{\bpm}{\begin{pmatrix}}
\newcommand{\epm}{\end{pmatrix}}
\begin{document}
%\linenumbers
%[Monogenic functions over real alternative $\ast$-algebras]
\title{Monogenic functions over real alternative $\ast$-algebras: fundamental results and applications}

\author
{Qinghai Huo$^1$\thanks{This work was partially supported by    NSFC (No. 12301097), the Fundamental Research Funds for the Central Universities (No. JZ2025HGTB0171) and  China Scholarship Council (No. 202506690055).}
,\ Guangbin Ren$^2$\thanks{This work was partially supported by NSFC (No. 12171448).}
,\ and Zhenghua Xu$^1$\thanks{This work was partially supported by  the Anhui Provincial Natural Science Foundation (No. 2308085MA04),
 the Fundamental Research Funds for the Central Universities (No. JZ2025HGTG0250) and China Scholarship Council (No. 202506690052).}
\\
\\
\emph{$^1$\small School of Mathematics, Hefei University of Technology,}\\
\emph{\small  Hefei, 230601, P. R. China}\\
\emph{\small E-mail address: hqh86@mail.ustc.edu.cn;}  \\
\emph{\small zhxu@hfut.edu.cn}
\\
\emph{\small $^2$School of Mathematical Sciences, University of Science and Technology of China, }\\
\emph{\small Hefei 230026, P. R. China} \\
\emph{\small E-mail address:   rengb@ustc.edu.cn}
}

%\date{}

\maketitle

\begin{abstract}
 The concept of   monogenic  functions over real alternative $\ast$-algebras has recently  been introduced to unify several classical monogenic (or regular) functions theories in hypercomplex analysis, including quaternionic, octonionic, and Clifford analysis as special cases. This paper explores the fundamental properties of these monogenic functions in hypercomplex subspaces, focusing on  the Cauchy-Pompeiu  integral formula and Taylor series expansion,  among which the  non-commutativity and  especially  non-associativity of multiplications demand   full considerations in terms of some new techniques.
The theory presented herein provides a robust framework for understanding monogenic functions in the context of real alternative $\ast$-algebras, shedding light on the interplay between algebraic structures and hypercomplex analysis.

 \end{abstract}
{\bf Keywords:}\quad Functions of a hypercomplex variable; Clifford analysis; monogenic functions; alternative algebras\\
{\bf MSC (2020):}\quad  Primary: 30G35;  Secondary: 32A30,   17D05
%%%%%%%%%%%%% Introduction %%%%%%%%%%%%%%
\section{Introduction}
The theory of \textit{monogenic functions} has its roots in the 1930s, with seminal contributions by Moisil \cite{Moisil} and Fueter \cite{Fueter}, who introduced \textit{regular functions} of a quaternionic variable as a part of the broader study of hypercomplex analysis. This theory, initially focused on quaternions, was later extended to higher dimensions, ultimately leading to the development of monogenic functions over Clifford algebras. There are extensive  references, far from completeness, we refer to  \cite{Brackx,Colombo-Sabadini-Sommen-Struppa-04,Delanghe-Sommen-Soucek-92,Gurlebeck}.  A distinguishing feature of all Clifford algebras is their associativity, which greatly simplifies the analysis of the corresponding function theories. In 1973, Dentoni and Sce \cite{Dentoni-Sce}  (see \cite{Colombo-Sabadini-Struppa-20} for an English translation) extended the notion of regular functions from quaternions to the non-associative algebra of octonions (also called Cayley numbers), marking a pivotal development in the theory of functions over hypercomplex numbers.
See for instance \cite{Colombo-Sabadini-Struppa-00,Li-Peng-01,Li-Peng-02,Li-Peng-Qian-08,Liao-Li-11,Nono} for more results on octonionic regular functions.

%Early, these function  theories of one  hypercomplex variable had been developed intensively but independently.

 However, a significant limitation of monogenic (or regular) functions is that powers of the hypercomplex variable are not themselves monogenic (or regular), hindering the development of a more comprehensive theory. To address this shortcoming, Gentili and Struppa \cite{Gentili-Struppa-07} in 2006 introduced the concept of \textit{slice regularity} for functions of one quaternionic variable. Inspired by a concept of Cullen, this approach allows for a generalization of the notion of regularity and effectively solved the issue of powers. Slice regularity was subsequently extended to Clifford algebras \cite{Colombo-Sabadini-Struppa-09,Gentili-Struppa-08} and octonions \cite{Gentili-Struppa-10}, which led to a unification of the existing function theories under a broader framework for slice regular functions.

 Based on  the well-known Fueter construction, the theory of slice regular functions was later generalized by Ghiloni and Perotti \cite{Ghiloni-Perotti} to real alternative $\ast$-algebras, a class of algebras that includes the octonions. In this context, the theory of slice regular functions has been significantly developed  with foundational results extending to real alternative $\ast$-algebras, e.g. \cite{Ghiloni-Perotti-14,Ghiloni-Perotti-Stoppato-17,Ghiloni-Perotti-Stoppato-17adv,Ghiloni-Recupero-16,Ghiloni-Recupero-18,Ren-16}. These developments are particularly important as they allow for the unification of the theory of slice monogenic and slice regular functions, providing a more general framework for hypercomplex analysis.

In recent years, the concept of \textit{generalized partial-slice monogenic functions} has emerged \cite{Xu-Sabadini}, offering a unified framework that encompasses both monogenic and slice monogenic functions over Clifford algebras. See   \cite{Ding-Xu,Ding,Huo-24,Xu-Sabadini-3,Xu-Sabadini-2,Xu-Sabadini-4} for more results in this setting. This approach has   further broadened   the scope of hypercomplex function theory, particularly in the context of real alternative  $\ast$-algebras \cite{Ghiloni-Stoppato-24-1,Ghiloni-Stoppato-24-2} where   the notion of \textit{$T$-regular functions}  was proposed. Although these generalized theories have been studied in depth within the context of associative algebras, they have been less explored in non-associative algebras such as the octonions.
Hence,   the authors in \cite{Xu-Sabadini-25-o} continued to investigate generalized partial-slice monogenic functions from  the associative case to the non-associative alternative algebra of octonions, which  sheds  light on the present work.

%where it was pointed out that \textit{the statements and   arguments hold more in general in real alternative algebras equipped with a notion of conjugation.}This observation   naturally
In 1984, Ryan \cite{Ryan-84}  extended  monogenic functions in  Clifford analysis to complex, finite-dimensional, but associative algebras with identity. In 2020, Alpay, Paiva, and Struppa \cite{Alpay} developed the theory of hyperholomorphic functions with values   in an associative (real or  complex) Banach algebra.   In the study of slice analysis,   Perotti \cite{Perotti-22} in 2022 introduced the   Cauchy-Riemann operator $\overline{\partial}_{\mathcal{B}}$ over  real alternative algebras.
More recently, in 2024,  Ghiloni and Stoppato \cite{Ghiloni-Stoppato-24-2} reintroduced  monogenic functions  on domains  in hypercomplex subspaces and valued in  real alternative $\ast$-algebras, although their discussion of foundational results was limited to the associative case.
In summary, hyperholomorphic (or monogenic) functions with values   in general  algebras has received  considerable attention, but   most studies have been restricted to associative algebras.

This paper extends the study of monogenic functions to any finite-dimensional space, from associative algebras to general real alternative $\ast$-algebras, including the non-associative case such as octonions. In particular, we develop fundamental properties for monogenic functions on
domains in hypercomplex subspaces with values in real alternative $\ast$-algebras, such as the Cauchy (and Cauchy-Pompeiu) integral formula and the Taylor series expansion. Naturally, the main difficulty in this paper is dealing with the non-commutativity and especially the non-associativity of multiplications.

The paper is structured as follows. In Section 2, we review the basic properties of real alternative $\ast$-algebras and provide an overview of monogenic functions on hypercomplex subspaces. In particular, we establish an identity theorem (Theorem \ref{Identity-theorem}) for monogenic   functions on hypercomplex subspaces  of real alternative $\ast$-algebras. Section 3 is dedicated to the development of the Cauchy-Pompeiu integral formula (Theorem \ref{CauchyPompeiuslice}), along with some consequences, including the mean value theorem and maximum modulus principle. As an application, in the setting of real alternative $\ast$-algebras,   we define the  Teodorescu transform and show it can be viewed as a right inverse of the   Cauchy-Riemann operator $\overline{\partial}_{\mathcal{B}}$; see Theorem \ref{Cauchy-Pompeiu-inverse}.
Finally, in Section 4, we present the Taylor series expansion for monogenic functions in Theorem \ref{Taylor-lemma-T}  by introducing monogenic Fueter polynomials and applying a Cauchy-Kovalevskaya extension of real analytic functions over real alternative $\ast$-algebras.

 \section{Preliminaries}
In this section, we first collect some preliminary results for alternative algebras and   recall   the notion  of  monogenic   functions on hypercomplex subspaces, with a discussion on the identity theorem. For more details, we refer to  \cite{Okubo,Schafer}   on  alternative algebras and \cite{Ghiloni-Stoppato-24-2,Perotti-22} on monogenic   functions on a hypercomplex subspace of real alternative $\ast$-algebras.

\subsection{ Real alternative $\ast$-algebras and hypercomplex subspaces}
\noindent Let $\mathbb{A}$ be a real algebra with a unity. A real algebra $\mathbb{A}$ is said to be alternative if the \textit{associator}
$$[a,b,c]:= (ab)c-a(bc)$$ is a trilinear, alternating function in  the variables $a,b,c\in \mathbb{A}$.

All  alternative algebras   obey some weakened associative laws:
\begin{itemize}
\item
A theorem of E. Artin asserting that
\textit{the subalgebra generated by two elements of $\mathbb  A$ is associative}.
\item
The Moufang identities:
$$a(b(ac)) = (aba)c,\qquad ((ab)c)b = a(bcb), \qquad (ab)(ca) = a(bc)a,$$
for  $a,b,c\in\mathbb A$.
\end{itemize}
A real algebra $\mathbb  A$  is called  $\ast$-algebra  if $\mathbb  A$  is equipped with an anti-involution (or called $\ast$-involution), which is a real linear map $^{c}:\mathbb  A \rightarrow \mathbb  A$,  $a\mapsto a^{c}$ satisfying  that $a^{c}= a$ for $a\in \mathbb{R} \subseteq \mathbb  A $ and
$$ (a^{c})^{c} = a,  \ (ab)^{c} =b^{c}a^{c},  \quad a,b \in\mathbb A.$$
\begin{assumption}
Assume that $(\mathbb A, +,  \cdot, ^{c})$ is an alternative real  $\ast$-algebra with a unity,  of finite dimension $d>1$ as   a real vector space, and  equipped with an anti-involution $^{c}$.
Additionally, we endow  $\mathbb A$   with the natural topology and differential structure as a real vector space.
\end{assumption}

\begin{example} [Division algebras]
The classical division algebras of  the complex numbers $\mathbb{C}$, quaternions $\mathbb{H}$ and  octonions $\mathbb{O}$ are real  $\ast$-algebras, where $\ast$-involutions are  the standard conjugations of  complex numbers, quaternions, and  octonions, respectively.
\end{example}
\begin{example} [Clifford algebras]
 The   Clifford algebra   $\mathbb{R}_{0,m}$ is an associative $\ast$-algebra,  which is generated  by the standard orthonormal basis $\{e_1,e_2,\ldots, e_m\}$ of the $m$-dimensional real Euclidean space  $\mathbb{R}^m$ by assuming   $$e_i e_j + e_j e_i= -2\delta_{ij}, \quad 1\leq i,j\leq m,$$
where    $\ast$-involution is given by the Clifford conjugation.
\end{example}
\begin{example}[Dual quaternions]
 The dual quaternions $\mathbb{DH}$    is an associative  $\ast$-algebra defined as $\mathbb{H}+\epsilon \mathbb{H}$  with
 $g^{c}=g_1^{c}+\epsilon g_2^{c}$ and
\begin{itemize}
\item
Addition:  $g+h=g_1+h_1+\epsilon (g_2+ h_2),$
\item
Multiplication: $gh=g_1h_1+   \epsilon (g_1h_2+g_2h_1 ),$
\end{itemize}
for all $g=g_1+\epsilon g_2, h=h_1+\epsilon h_2\in \mathbb{DH},$ where $ g_1, g_2,h_1,h_2\in \mathbb{H}$, $\ast$-involution is the standard conjugates of  quaternions, $\epsilon$ commutes with every element of $\mathbb{DH}$   and  $\epsilon^{2}=0$.
\end{example}

For each element $x$ in the $\ast$-algebra ${\mathbb A}$, its \textit{trace}  is  given by $t(x):= x+x^{c}\in {\mathbb A}$ and its  (squared) \textit{norm} is $n(x):= xx^{c}\in {\mathbb A}.$
 Denote  the  sphere  of the imaginary units of $\mathbb A$ compatible with the  $\ast$-algebra structure  by
 $$\mathbb{S}_{{\mathbb A}}:= \{x  \in \mathbb A : t(x)=0,  n(x)=1  \}.$$
\begin{assumption}
Assume  $\mathbb{S}_{{\mathbb A}}\neq \emptyset$.
\end{assumption}
For each $J \in \mathbb{S}_{{\mathbb A}}$,   denote by $$\mathbb{C}_{J}:=\langle1,J\rangle \cong \mathbb{C},$$ the subalgebra of ${\mathbb A}$ generated by $1$ and $J$.
Denote the quadratic cone of ${\mathbb A}$ by
$$Q_{{\mathbb A}}:= \mathbb{R} \cup \big\{x \in {\mathbb A} \mid t(x) \in  \mathbb{R}, \ n(x)\in \mathbb{R}, \  4\,  n(x)>t(x)^{2} \big\}.$$
It is known that
$$  Q_{\mathbb A}=\bigcup_{J\in\mathbb S_{\mathbb A}} \mathbb C_J,$$
 and
 $$\mathbb C_I \cap \mathbb C_J=\mathbb R, \qquad  I, J\in\mathbb S_{\mathbb A}, \ I\neq \pm J.$$

\begin{definition}
A fitted basis of $\mathbb A$ is an ordered basis $(w_0, w_1,  \ldots , w_d)$ of $\mathbb A$ such that $w_s^{c} =\pm w_s$
for all $s\in \{0, \ldots, d\}$.
\end{definition}

\begin{lemma} (\cite[Lemma 2.8]{Ghiloni-Stoppato-24-2})
Fix $m\geq 1$. If $v_1,  \ldots, v_m \in \mathbb{S}_{{\mathbb A}}$ are linearly independent, then $(1,v_1,  \ldots, v_m )$ can
be completed to a fitted basis of  $\mathbb A$.
\end{lemma}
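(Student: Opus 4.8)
The plan is to reduce the statement to elementary linear algebra by exploiting that the anti-involution $^{c}$ is, forgetting the multiplicative structure, a real-linear involution of $\mathbb{A}$, and that a fitted basis is nothing but a basis adapted to its eigenspace decomposition. Since $(a^{c})^{c}=a$, the map $^{c}$ squares to the identity, so as a real vector space
$$\mathbb{A}=\mathbb{A}^{+}\oplus\mathbb{A}^{-},\qquad \mathbb{A}^{\pm}:=\{a\in\mathbb{A}: a^{c}=\pm a\},$$
the sum of the $(+1)$- and $(-1)$-eigenspaces of $^{c}$. Every element of $\mathbb{A}^{+}\cup\mathbb{A}^{-}$ is fixed up to sign by $^{c}$, so any basis of $\mathbb{A}$ obtained by concatenating a basis of $\mathbb{A}^{+}$ with a basis of $\mathbb{A}^{-}$ is automatically fitted. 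This is the conceptual crux; once it is in place the argument becomes routine.

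First I would locate the prescribed vectors in this decomposition. For the unity, putting $b=1$ in the relation $(ab)^{c}=b^{c}a^{c}$ gives $a^{c}=1^{c}a^{c}$ for all $a$; since $^{c}$ is bijective, $a^{c}$ ranges over all of $\mathbb{A}$, so $1^{c}$ is a left unity, and in a unital algebra the left unity coincides with $1$. Hence $1^{c}=1$ and $1\in\mathbb{A}^{+}$. For each $v_i\in\mathbb{S}_{\mathbb{A}}$, the defining condition $t(v_i)=v_i+v_i^{c}=0$ yields $v_i^{c}=-v_i$, so $v_i\in\mathbb{A}^{-}$. I would then record that $(1,v_1,\ldots,v_m)$ is itself linearly independent: if $c_0\,1+\sum_{i=1}^{m}c_i v_i=0$ with $c_j\in\mathbb{R}$, applying $^{c}$ gives $c_0\,1-\sum_i c_i v_i=0$, and adding and subtracting forces $c_0=0$ and $\sum_i c_i v_i=0$, so all $c_i=0$ by the hypothesized independence of the $v_i$. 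Equivalently, $1\in\mathbb{A}^{+}\setminus\{0\}$ while the $v_i$ lie in $\mathbb{A}^{-}$, and $\mathbb{A}^{+}\cap\mathbb{A}^{-}=\{0\}$.

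Finally I would complete each piece separately within its own eigenspace. Extend the independent set $\{1\}\subset\mathbb{A}^{+}$ to a basis $(1,u_1,\ldots,u_r)$ of $\mathbb{A}^{+}$, and extend $\{v_1,\ldots,v_m\}\subset\mathbb{A}^{-}$ to a basis $(v_1,\ldots,v_m,v_{m+1},\ldots,v_s)$ of $\mathbb{A}^{-}$, both possible by the standard basis-completion theorem. Because $\mathbb{A}=\mathbb{A}^{+}\oplus\mathbb{A}^{-}$, the concatenation
$$(1,v_1,\ldots,v_m,u_1,\ldots,u_r,v_{m+1},\ldots,v_s)$$
is a basis of $\mathbb{A}$; reindexing it as $(w_0,w_1,\ldots)$ with $w_0=1$ and $w_1,\ldots,w_m=v_1,\ldots,v_m$, each $w_s$ lies in $\mathbb{A}^{+}$ or $\mathbb{A}^{-}$ and therefore satisfies $w_s^{c}=\pm w_s$. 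This is the required fitted basis completing $(1,v_1,\ldots,v_m)$. The only delicate points in the whole plan are the verification that $1^{c}=1$ and the recognition that the completion can be carried out eigenspace-by-eigenspace; no use of alternativity or of the Moufang identities is needed, only the anti-involution axioms together with $t(v_i)=0$.
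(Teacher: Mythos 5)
Your proof is correct, but note that the paper itself gives no proof of this lemma---it is imported verbatim from \cite[Lemma 2.8]{Ghiloni-Stoppato-24-2}---so the comparison is with that source, whose argument is essentially the one you give: split $\mathbb{A}$ into the $\pm 1$-eigenspaces of the real-linear involution $^{c}$, observe $1^{c}=1$ and $v_i^{c}=-v_i$ (the latter from $t(v_i)=0$; the condition $n(v_i)=1$ is indeed not needed), and complete a basis eigenspace-by-eigenspace. Your two auxiliary verifications---that $1^{c}$ is a left unity by surjectivity of $^{c}$ and hence equals $1$, and that $(1,v_1,\ldots,v_m)$ is automatically independent because $\mathbb{A}^{+}\cap\mathbb{A}^{-}=\{0\}$---are both sound, and you are right that no alternativity is used.
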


\begin{definition}
Let $M$ be a real vector subspace of our $\ast$-algebra $\mathbb A$. An ordered real vector
basis $(v_0,v_1,  \ldots, v_m)$  of $M $ is called a hypercomplex basis of $M$ if: $m \geq 1$; $v_0 =1$; $v_s \in \mathbb{S}_{{\mathbb A}}$
and $v_sv_t = -v_tv_s$ for all distinct $s, t\in \{1,  \ldots, m\}$. The subspace $M$ is called a hypercomplex
subspace of $\mathbb A$ if $\mathbb{R}  \subsetneq   M  \subseteq Q_{{\mathbb A}}$.
\end{definition}

Equivalently, a basis $(v_0=1,v_1,  \ldots, v_m)$ is a hypercomplex basis if, and only if, $t(v_s)=0, n(v_s)=1$
and $t(v_s v_t^{c})=0$ for all distinct $s, t\in \{1,\ldots , m\}$.

\begin{assumption}\label{ass} Within the alternative $\ast$-algebra $\mathbb A$, we fix a hypercomplex subspace $M$, with a fixed hypercomplex basis  $\mathcal{B}=(v_0,v_1,  \ldots, v_m)$. Moreover, $\mathcal{B}$ is completed to a real vector basis $\mathcal{B}' =(v_0, v_1,  \ldots , v_d)$ of  $\mathbb A$ and  $\mathbb A$ is endowed with the standard Euclidean scalar product $\langle\cdot,\cdot\rangle$ and norm $ | \cdot |$ associated to $\mathcal{B}'$. If this is the case, then we have for all $x, y\in M$
$$t(xy^{c})=t(y^{c}x)=2 \langle x,y\rangle,$$
$$n(x)=n(x^c)=|x|^{2}.$$
 \end{assumption}
 We remark that every hypercomplex subspace  admits a hypercomplex basis   \cite[\S \ 3]{Perotti-22} and refer the reader to \cite[Theorem 2.4]{Ghiloni-Stoppato-24-1} for the rationality of Assumption \ref{ass}.

\begin{example}\label{example:strong-regular}  The best-known examples of    hypercomplex subspaces   in real alternative $\ast$-algebras   are given by
\begin{eqnarray*}
M=
\begin{cases}
\mathbb H, \qquad  \quad \qquad   \, \mathbb A=\mathbb H, \qquad      Q_{\mathbb A}=\mathbb H,
\\
\mathbb O, \qquad  \quad \qquad  \, \mathbb A=\mathbb O, \qquad     Q_{\mathbb A}=\mathbb O,
\\  \mathbb{R}^{m+1}, \qquad  \quad \ \,  \mathbb A=\mathbb{R}_{0, m}, \ \ \  Q_{\mathbb A}\supseteq\mathbb R^{m+1}.
\end{cases}
\end{eqnarray*}
This allows to  unify the  theory of monogenic functions in quaternionic analysis, octonionic
analysis, and Clifford analysis. Furthermore, this theory includes the $\psi$-hyperholomorphic functions   when  the hypercomplex subspaces   in real alternative $\ast$-algebras $\mathbb{A}$, such as $\mathbb{H},  \mathbb{R}_{0,m}$, is chosen to be the so-called \textit{structural set} $\psi$; see  \cite{Shapiro} and references therein.

In addition, we can consider more  monogenic functions  theories, such as  $M=\mathbb H$   when $ \mathbb A=\mathbb{DH}$, where $Q_{\mathbb A}$ is a $6$-dimensional semialgebraic subset of $\mathbb{DH}$.
\end{example}

In this paper, we shall make use of the following useful properties  which are  immediate consequences of the Artin  theorem.
\begin{proposition}\label{artin-inverse}(\cite[p. 38]{Schafer})
For any $x, y \in \mathbb{A},$ if x is invertible, then it holds that
$$[x^{-1},x,y]=0.$$
\end{proposition}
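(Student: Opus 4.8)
The plan is to reduce the claim to Artin's theorem by showing that $x^{-1}$, $x$, and $y$ all lie inside a single associative subalgebra of $\mathbb{A}$. Since the associator $[a,b,c]=(ab)c-a(bc)$ vanishes identically whenever $a,b,c$ belong to an associative subalgebra, it suffices to exhibit such a subalgebra containing all three elements. The natural candidate is the subalgebra generated by the two elements $x$ and $y$, which is associative by Artin's theorem.

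The only point requiring work is the claim that $x^{-1}$ also lies in this subalgebra; equivalently, that $x^{-1}$ belongs to the subalgebra $\langle x\rangle$ generated by $x$ alone. Granting this, the subalgebra generated by $\{x^{-1},x,y\}$ coincides with the one generated by $\{x,y\}$, so Artin's theorem applies and $[x^{-1},x,y]=0$ follows at once, since the entries of the associator all lie in an associative subalgebra.

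To prove $x^{-1}\in\langle x\rangle$, I would exploit the finite-dimensionality of $\mathbb{A}$ together with the fact that alternativity makes $\langle x\rangle$ a commutative, associative, power-associative algebra, so that powers of $x$ are unambiguous. Because $\dim_{\mathbb{R}}\mathbb{A}=d<\infty$, the powers $1,x,x^2,\dots$ are linearly dependent, hence $x$ satisfies a nontrivial relation $\sum_{k=0}^{n}c_k x^k=0$ of least degree. Were the constant term $c_0$ zero, one could factor out $x$ and realize $x$ as a zero divisor in $\langle x\rangle$, contradicting its invertibility; thus $c_0\neq 0$, and rearranging gives $x^{-1}=-c_0^{-1}\sum_{k=1}^{n}c_k x^{k-1}\in\langle x\rangle$, with the uniqueness of inverses guaranteeing that this polynomial coincides with the given $x^{-1}$.

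I expect this last step to be the main obstacle: invertibility of $x$ is a priori only a statement about the full algebra $\mathbb{A}$, whereas Artin's theorem needs the inverse to be detectable within a two-generated subalgebra. The minimal-relation argument bridges this gap by locating $x^{-1}$ inside $\langle x\rangle$, after which the conclusion is immediate. One could alternatively avoid finite-dimensionality and argue purely from the alternating property of the associator and the Moufang identities in the style of \cite{Schafer}, but the Artin-based route above is the most economical given the hypotheses in force here.
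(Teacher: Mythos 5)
Your overall strategy is exactly the reduction the paper has in mind: the authors give no proof at all, stating the proposition with the citation \cite[p.~38]{Schafer} and the remark that it is an immediate consequence of Artin's theorem, and your argument supplies precisely the missing content of that remark, namely that $x^{-1}$ lies in the subalgebra $\langle x\rangle$ generated by $x$, so that $x^{-1},x,y$ all sit inside the associative subalgebra $\langle x,y\rangle$. The minimal-polynomial argument is legitimate here because the standing Assumption makes $\mathbb{A}$ finite-dimensional, and $\langle x\rangle$ is indeed associative and commutative (a special case of Artin), so powers of $x$ are unambiguous.

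Two of your steps, however, silently invoke instances of the very identity being proved, and they need a patch. First, the claim that $xq(x)=0$ with $q(x)\neq 0$ ``contradicts invertibility'': in a nonassociative algebra, cancellation $xa=0\Rightarrow a=0$ is not free, since it amounts to $a=(x^{-1}x)a=x^{-1}(xa)$, i.e., to $[x^{-1},x,a]=0$ --- the proposition itself. The rescue is that here $a=q(x)\in\langle x\rangle\subseteq\langle x^{-1},x\rangle$, and the latter subalgebra is generated by the \emph{two} elements $x^{-1}$ and $x$, hence associative by Artin; therefore $q(x)=(x^{-1}x)q(x)=x^{-1}\bigl(xq(x)\bigr)=0$, a genuine contradiction. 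Second, your appeal to ``uniqueness of inverses'' has the same hazard and the same fix: your candidate $u=-c_0^{-1}\sum_{k\geq 1}c_kx^{k-1}$ lies in $\langle x\rangle\subseteq\langle x,x^{-1}\rangle$, so associativity of $\langle x,x^{-1}\rangle$ gives $u=u(xx^{-1})=(ux)x^{-1}=x^{-1}$, using $ux=1$ from commutativity of $\langle x\rangle$. With these two sentences inserted your proof is complete and correct. For comparison, the proof in Schafer that the paper cites proceeds instead through the Moufang identities (e.g.\ $x\bigl(x^{-1}(xc)\bigr)=(xx^{-1}x)c$) and the resulting operator identities $L_{x^{-1}}=L_x^{-1}$ and $R_{x^{-1}}=R_x^{-1}$; that route is valid in an arbitrary alternative ring with no dimension hypothesis, whereas your route is more elementary but genuinely tied to $\dim_{\mathbb{R}}\mathbb{A}<\infty$ --- which is acceptable here, since the paper assumes it throughout.
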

\begin{proposition}\label{real}
For any $r\in \mathbb{R}$ and $x, y \in \mathbb{A},$ it holds that
$$[r,x,y]=0.$$
\end{proposition}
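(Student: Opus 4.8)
The plan is to reduce the claim to a single computation of the associator at the unity and then invoke linearity. The conceptual starting point is that throughout this setting a real number $r \in \mathbb{R}$ is identified with the algebra element $r \cdot 1 \in \mathbb{A}$, where $1$ denotes the unity of $\mathbb{A}$; this is precisely the canonical unital embedding $\mathbb{R} \hookrightarrow \mathbb{A}$. Under this reading the symbol $[r,x,y]$ means $[r\cdot 1, x, y]$, and the whole proof will rest only on the $\mathbb{R}$-bilinearity of the product together with the defining property of the unity.

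First I would record that the associator $[a,b,c]=(ab)c-a(bc)$ is $\mathbb{R}$-trilinear. This is not special to the alternative hypothesis: it is an immediate consequence of the product $\mathbb{A}\times\mathbb{A}\to\mathbb{A}$ being $\mathbb{R}$-bilinear, since distributivity and scalar homogeneity in each factor pass termwise to both summands $(ab)c$ and $a(bc)$. In particular the associator is $\mathbb{R}$-linear in its first argument, so $[r\cdot 1, x, y]=r\,[1,x,y]$ for all $r\in\mathbb{R}$ and $x,y\in\mathbb{A}$. Next I would evaluate the associator at the unity: using that $1$ is a two-sided unity, $[1,x,y]=(1\cdot x)y-1\cdot(xy)=xy-xy=0$. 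Combining the two steps yields $[r,x,y]=[r\cdot 1,x,y]=r\,[1,x,y]=r\cdot 0=0$, which is the assertion.

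As for obstacles, there are essentially none of a technical character: the statement holds in any unital real algebra and uses neither the \emph{alternating} property of the associator nor the Artin theorem underlying Proposition \ref{artin-inverse}. The only point deserving attention is conceptual, namely making explicit the identification $r\leftrightarrow r\cdot 1$ so that $[r,x,y]$ is genuinely the associator of three elements of $\mathbb{A}$; once this is fixed, the conclusion is forced by bilinearity and the unity axiom alone.
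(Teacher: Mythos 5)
Your proof is correct: the paper states Proposition \ref{real} without proof, treating it as immediate, and your argument---$\mathbb{R}$-trilinearity of the associator plus $[1,x,y]=(1\cdot x)y-1\cdot(xy)=0$ under the identification $r\leftrightarrow r\cdot 1$---is exactly the standard justification being taken for granted there. You are also right that the claim uses only unitality and bilinearity, not alternativity or the Artin theorem.
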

\begin{proposition}\label{artin}
For any $x\in M$ and $y \in \mathbb{A},$ it holds that
$$[x,x,y]=[x^{c},x,y]=0.$$
\end{proposition}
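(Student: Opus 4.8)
The plan is to read off both identities directly from the algebraic structure already assembled in the preliminaries, with essentially no computation. The first equality $[x,x,y]=0$ is immediate: by definition the associator $[a,b,c]=(ab)c-a(bc)$ is a trilinear \emph{alternating} function of its three arguments, and an alternating multilinear map over $\mathbb R$ vanishes whenever two of its arguments coincide. Taking the first two arguments equal to $x$ therefore gives $[x,x,y]=0$ for every $x$ and every $y\in\mathbb A$; note that this half in fact needs no hypothesis on $x$ beyond membership in $\mathbb A$.

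For the second equality the only additional ingredient I would invoke is the explicit form of the conjugate on $M$. Writing $x=\sum_{s=0}^m x_s v_s$ with respect to the fixed hypercomplex basis $\mathcal B=(v_0=1,v_1,\dots,v_m)$, and recalling that $t(v_0)=2$ while $t(v_s)=0$ for $s\geq 1$, the real-linearity of the trace yields $t(x)=2x_0\in\mathbb R$. Since $t(x)=x+x^c$, this gives the key reduction $x^c=t(x)-x$: the conjugate of an element of $M$ differs from $-x$ only by a real scalar.

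With this in hand the remaining step is a one-line manipulation. Using the real-linearity of the associator in its first slot,
$$[x^c,x,y]=[t(x)-x,\,x,\,y]=[t(x),x,y]-[x,x,y].$$
Here the first term on the right vanishes by Proposition \ref{real}, because $t(x)\in\mathbb R$, while the second term vanishes by the alternating property invoked above. Hence $[x^c,x,y]=0$, and both claimed identities are established.

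I do not anticipate any genuine obstacle: the statement is a structural consequence of alternativity combined with the defining feature of a hypercomplex subspace. The only point meriting care is that the reduction $x^c=t(x)-x$ truly relies on $x\in M$, so that every imaginary basis vector $v_s$ has vanishing trace and $t(x)$ lands in $\mathbb R$; for a general $x\in\mathbb A$ the trace need not be real and this route would break down.
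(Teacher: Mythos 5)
Your proof is correct: $[x,x,y]=0$ follows immediately from the associator being alternating, and the reduction $x^c=t(x)-x$ with $t(x)=2x_0\in\mathbb{R}$ (valid precisely because $x\in M$, as you note), combined with trilinearity and Proposition \ref{real}, yields $[x^c,x,y]=0$. The paper states this proposition without proof, as an immediate consequence of the surrounding structure, and your argument is exactly the intended one.
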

 At the end of this subsection, we recall a useful result from \cite[Remark 2.27]{Ghiloni-Stoppato-24-2}.
\begin{proposition}\label{norm}
  There exists some $C\geq1$ such that
$$|xy|\leq C | x |  | y |, \quad x\in M,y \in \mathbb{A}.$$
\end{proposition}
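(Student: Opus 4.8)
The plan is to recognize that this proposition simply asserts that the (bilinear) multiplication map on the finite-dimensional space $\mathbb A$ is bounded, and that neither associativity nor the restriction $x\in M$ plays any essential role; in fact I would prove the stronger bound valid for all $x,y\in\mathbb A$. First I would write both arguments in coordinates with respect to the fitted basis $\mathcal B'=(v_0,\ldots,v_d)$, say $x=\sum_{s=0}^d x_s v_s$ and $y=\sum_{t=0}^d y_t v_t$ with $x_s,y_t\in\mathbb R$. By distributivity (bilinearity) of the product, $xy=\sum_{s,t} x_s y_t\, v_sv_t$.

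Next I would control the finitely many structure products $v_sv_t$. Setting $K:=\max_{0\le s,t\le d}|v_sv_t|<\infty$, the triangle inequality yields
\[
|xy|\le \sum_{s,t}|x_s|\,|y_t|\,|v_sv_t|\le K\Big(\sum_{s=0}^d|x_s|\Big)\Big(\sum_{t=0}^d|y_t|\Big).
\]
Because $|\cdot|$ is precisely the Euclidean norm associated to $\mathcal B'$ (as fixed in the Assumption), one has $|x|^{2}=\sum_s x_s^{2}$, so the Cauchy--Schwarz inequality gives $\sum_s|x_s|\le\sqrt{d+1}\,|x|$ and likewise $\sum_t|y_t|\le\sqrt{d+1}\,|y|$. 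Combining these, $|xy|\le K(d+1)\,|x|\,|y|$. Taking $C:=\max\{1,K(d+1)\}$ then secures both $C\ge 1$ and the desired inequality, a fortiori in the special case $x\in M$.

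The only step demanding a word of care, rather than presenting a genuine obstacle, is the passage from the $\ell^{1}$-type estimate to the stated Euclidean norm: it is exactly here that the hypothesis that $|\cdot|$ is the Euclidean norm of the coordinates in $\mathcal B'$ is invoked. Everything else---finite dimensionality of $\mathbb A$ and distributivity of the product---is automatic, and in particular the alternative, non-associative nature of $\mathbb A$ is entirely irrelevant to this estimate. One could equally phrase the whole argument abstractly as ``a bilinear map between finite-dimensional normed spaces is bounded,'' but recording the explicit constant $K(d+1)$ is convenient for the later analytic estimates.
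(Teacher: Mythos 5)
Your proof is correct, and it matches the paper in the only sense possible: the paper states this proposition without any proof, implicitly invoking exactly the standard fact you establish, namely that multiplication, being bilinear on the finite-dimensional space $\mathbb{A}$, is bounded with respect to the Euclidean norm attached to $\mathcal{B}'$. Your coordinate estimate via $K=\max_{0\le s,t\le d}|v_sv_t|$ and Cauchy--Schwarz, yielding $C=\max\{1,K(d+1)\}$, correctly supplies the omitted details, including the (harmless) strengthening from $x\in M$ to arbitrary $x\in\mathbb{A}$.
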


\subsection{Monogenic functions on hypercomplex subspaces}
 Recall that $\mathcal{B}=(v_0=1,v_1,  \ldots, v_m)$ is  a fixed hypercomplex basis of  the hypercomplex subspace $M$.  Throughout this paper,  an element $(x_0,x_1,\ldots,x_m)\in \R^{m+1}$ will be identified with an element in $M$,  via
$$(x_0,x_1,\ldots,x_m)  \mapsto  x= \sum_{s=0}^{m}x_sv_s =\sum_{s=0}^{m}v_s x_s. $$
 We consider the functions  $f:\Omega\longrightarrow  \mathbb{A}$, where $\Omega\subseteq \R^{m+1}$ is a domain (i.e., connected open set). As usual, denote $\mathbb{N}=\{0,1,2,\ldots\}$.
 For $k\in \mathbb{N}\cup \{\infty\}$,    denote by $C^{k}(\Omega,\mathbb{A})$ the set of all functions  $f (x)=\sum_{s=0}^{d}  v_{s}   f_{s}(x)$ with real-valued $f_{s}(x)\in  C^{k}(\Omega)$.

In this  subsection,   we  recall the definition of  monogenic functions.

\begin{definition}[Monogenic  function] \label{monogenic-Clifford}
Let $\Omega $ be a domain   in $M$ and let  $f\in C^{1}(\Omega,\mathbb{A})$.  The function $f (x)=\sum_{s=0}^{d}  v_{s}   f_{s}(x)$ is called  left monogenic  with respect to $\mathcal{B}$ in $\Omega $ if
$$ \overline{\partial}_{\mathcal{B}}f(x):=\sum _{s=0}^{m}v_{s} \frac{\partial f}{\partial x_{s}}(x)
= \sum _{s =0}^{m} \sum _{t=0}^{d}   v_{s}v_{t} \frac{\partial f_{t}}{\partial x_{s}}(x)=0, \quad x\in \Omega. $$
Similarly, the function $f$ is called  right  monogenic  with respect to $\mathcal{B}$  in $\Omega $ if
$$ f(x)\overline{\partial}_{\mathcal{B}}:=\sum _{s=0}^{m}  \frac{\partial f}{\partial x_{s}}(x)v_{s}= \sum _{s =0}^{m} \sum _{t=0}^{d}   v_{t}v_{s}
\frac{\partial f_{t}}{\partial x_{s}}(x)=0, \quad x\in \Omega. $$
\end{definition}
Denote by $\mathcal {M}(\Omega, \mathbb{A})$ or $\mathcal {M}^{L}(\Omega, \mathbb{A})$  (resp. $\mathcal {M}^{R}(\Omega, \mathbb{A})$)   the function class of  all  left  (resp. right) monogenic functions   $f:\Omega \rightarrow \mathbb{A}$.

 When $\mathbb{A}$ is  associative, such as   $\mathbb{A}=\mathbb{R}_{0,m}$,      $\mathcal {M}^{L}(\Omega, \mathbb{A})$ is   a right $\mathbb{R}_{0,m}$-module.
Meanwhile, when  $\mathbb{A}$ is     non-associative, such as $\mathbb{A}=\mathbb{O}$,  $\mathcal{M}^{L}(\Omega, \mathbb{A})$ is generally  not a right   $\mathbb{O}$-module.

The  partial differential operator with constant coefficients  in Definition  \ref{monogenic-Clifford}
$$\overline{\partial}_{\mathcal{B}}=\overline{\partial}_{x,\mathcal{B}}= \sum_{s=0}^{m}v_s\partial_{s},
\quad \partial_{s}=\frac{\partial}{\partial x_s},$$
is called the   Cauchy-Riemann operator  induced by $\mathcal{B}$.

The conjugated Cauchy-Riemann operator induced by  $\mathcal{B}$ is defined as
$$  \partial_{\mathcal{B}} f(x):=\partial_{0}f (x)- \sum _{s=1}^{m}v_{s}\partial_{s} f (x), \quad f\in C^{1}(\Omega,\mathbb{A}),$$
and
$$ f(x) \partial_{\mathcal{B}}:=\partial_{0}f (x)- \sum _{s=1}^{m}\partial_{s} f (x)v_{s}, \quad f\in C^{1}(\Omega,\mathbb{A}).$$
Define  the Laplacian operator on $M$  induced by  $\mathcal{B}$ as
$$   \Delta_{\mathcal{B}} f(x):=  \sum _{s=0}^{m} \partial_{s}^{2} f (x), \quad f\in C^{2}(\Omega,\mathbb{A}),$$
which is independent of  the choice of the  hypercomplex basis $\mathcal{B}$ of $M$.

\begin{example}\label{Cauchy-kernel-example}
Consider the Cauchy kernel
$$E(x):=\frac{1}{\sigma_{m}}\frac{x^{c} }{|x|^{m+1}}, \quad x \in\Omega=M\setminus \{0\},$$
where $\sigma_{m}=2\frac{\Gamma^{m+1}(\frac{1}{2}) }{\Gamma (\frac{m+1}{2})} $ is the surface area of the unit ball in $\mathbb{R}^{m+1}$. \\
Then $E \in \mathcal {M}^{L}(\Omega,  \mathbb{A}) \cap{\mathcal{M}}^{R}(\Omega,  \mathbb{A}).$
\end{example}

The proof of the left (and right) monogenicity of the Cauchy kernel is  omitted here
since it  could be included in   \cite[Lemma 3.13]{Ghiloni-Stoppato-24-2}, where the product of at most two non-real elements does not involve associativity.

\begin{remark}\label{C1-C00}{\rm
All functions  in $\mathcal {M}(\Omega,  \mathbb{A})$ are $ C^{\infty}(\Omega,  \mathbb{A})$ by Theorem \ref{Cauchy-slice} below, whose proof   is independent of what follows in the rest of this section.}  \end{remark}

For $\mathrm{k}=(k_0,k_1,\ldots,k_{m})\in\mathbb{N}^{m+1}$,  denote $|\mathrm{k}|:=k_0+k_1+\cdots+k_{m}$ and $\mathrm{k}!=k_0!k_1!\cdots k_m!$.  For $f \in C^{|\mathrm{k}|}(\Omega,\mathbb{A})$,  define
\begin{equation*}
\partial_{\mathrm{k}}f( x)= \frac{\partial^{|\mathrm{k}|} }{\partial_{x_{0}}^{k_0}\partial_{x_{1}}^{k_1}\cdots\partial_{x_{m}}^{k_m} }f( x).
\end{equation*}

By definition and Remark \ref{C1-C00}, we  have
\begin{proposition}\label{preserving-monogenic}
Let $\Omega\subseteq M$ be a  domain and $f\in \mathcal{M}(\Omega,  \mathbb{A})$.  Then, for any $y\in M$ and  $\mathrm{k}\in\mathbb{N}^{m+1}$, we have
$$f_{y}(\cdot):=f(y-\cdot) \in \mathcal {M}(\Omega_{y},  \mathbb{A}), \quad \partial_{\mathrm{k}}f \in \mathcal {M}(\Omega,  \mathbb{A}),$$
where $\Omega_{y}=\{x\in M:   y-x \in \Omega\}$.
\end{proposition}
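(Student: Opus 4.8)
The plan is to exploit the fact that the Cauchy-Riemann operator $\overline{\partial}_{\mathcal{B}}=\sum_{s=0}^{m}v_s\partial_s$ has constant coefficients, so that it is invariant under translations and commutes with every partial derivative $\partial_{\mathrm{k}}$. The only analytic input required is the smoothness of monogenic functions, guaranteed by Remark \ref{C1-C00}: since $f\in\mathcal{M}(\Omega,\mathbb{A})$ is automatically $C^{\infty}$, all partial derivatives appearing below exist, and mixed derivatives commute by Schwarz's theorem.

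First I would treat the translation. Set $g(x):=f(x-y)$, where $y\in\R^{m+1}$ is identified with the constant element $\sum_{s=0}^{m}y_sv_s\in M$; since translation preserves connectedness and openness, $\Omega+\{y\}$ is again a domain. The chain rule gives $\partial_s g(x)=(\partial_s f)(x-y)$ for each $s$, and because the coefficients $v_s$ are independent of $x$, summing yields
$$\overline{\partial}_{\mathcal{B}}g(x)=\sum_{s=0}^{m}v_s(\partial_s f)(x-y)=(\overline{\partial}_{\mathcal{B}}f)(x-y)=0$$
for every $x\in\Omega+\{y\}$, since $x-y\in\Omega$ and $f$ is monogenic there. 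Hence $f(\cdot-y)\in\mathcal{M}(\Omega+\{y\},\mathbb{A})$.

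Next I would handle the derivatives. The key elementary observation is that left multiplication by a fixed $v_s$ commutes with each $\partial_j$: writing $f=\sum_{t=0}^{d}v_t f_t$ with real-valued components $f_t$, we have $v_s f=\sum_{t=0}^{d}(v_sv_t)f_t$, and since every product $v_sv_t$ is a constant element of $\mathbb{A}$, differentiation moves only past these constants, giving $\partial_j(v_s f)=v_s(\partial_j f)$. Consequently $\overline{\partial}_{\mathcal{B}}$ commutes with $\partial_{\mathrm{k}}$, and
$$\overline{\partial}_{\mathcal{B}}(\partial_{\mathrm{k}}f)=\partial_{\mathrm{k}}(\overline{\partial}_{\mathcal{B}}f)=0,$$
so that $\partial_{\mathrm{k}}f\in\mathcal{M}(\Omega,\mathbb{A})$.

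The computation is essentially routine once smoothness is in place; the only point demanding care is that all manipulations stay at the level of the real-valued components $f_t$, so that the non-associativity of $\mathbb{A}$ never intervenes: every algebra element ($v_s$, and each product $v_sv_t$) is a fixed constant, and only scalar derivatives of the $f_t$ are commuted past them. The main (mild) obstacle is therefore merely confirming that $f$ is regular enough for $\partial_{\mathrm{k}}f$ to be defined and for the order of differentiation to be interchangeable, which is precisely the content of Remark \ref{C1-C00}.
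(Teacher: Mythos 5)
Your proof is correct and follows exactly the route the paper intends: the paper gives no written proof, asserting the proposition ``by definition and Remark \ref{C1-C00},'' and your argument simply fills in those details --- translation invariance of the constant-coefficient operator $\overline{\partial}_{\mathcal{B}}$ via the chain rule, and commutation of $\overline{\partial}_{\mathcal{B}}$ with $\partial_{\mathrm{k}}$ via Schwarz's theorem applied to the real-valued components $f_t$, with smoothness supplied by Remark \ref{C1-C00}. Your explicit observation that non-associativity never intervenes, because only real scalar derivatives are moved past the fixed constants $v_s v_t$, is a welcome clarification but does not constitute a different approach.
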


\begin{proposition}\label{GSM-Harm}
A function $f\in \mathcal {M}(\Omega, \mathbb{A})$  is necessarily harmonic in $\Omega$, hence   real  analytic.
\end{proposition}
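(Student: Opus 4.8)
The plan is to factor the Laplacian through the two first-order operators, establishing the operator identity $\partial_{\mathcal{B}}\,\overline{\partial}_{\mathcal{B}} = \Delta_{\mathcal{B}}$ on $C^2(\Omega,\mathbb{A})$, and then to apply it to a monogenic $f$. Since Remark \ref{C1-C00} already guarantees $f\in C^{\infty}(\Omega,\mathbb{A})$, the composition is well defined, and from $\overline{\partial}_{\mathcal{B}}f=0$ one reads off at once that $\Delta_{\mathcal{B}}f=\partial_{\mathcal{B}}(\overline{\partial}_{\mathcal{B}}f)=\partial_{\mathcal{B}}(0)=0$, i.e.\ $f$ is harmonic.

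First I would expand $\partial_{\mathcal{B}}\,\overline{\partial}_{\mathcal{B}}f$. Writing $\overline{\partial}_{\mathcal{B}}f=\partial_0 f+\sum_{t=1}^{m}v_t\partial_t f$ and applying $\partial_{\mathcal{B}}=\partial_0-\sum_{s=1}^{m}v_s\partial_s$, the $v_s$ being constant and the partial derivatives commuting, the mixed first-order terms involving $\partial_0\partial_t$ and $\partial_s\partial_0$ cancel after relabeling. What survives is $\partial_0^2 f-\sum_{s,t=1}^{m}v_s(v_t\,\partial_s\partial_t f)$, so the whole identity reduces to showing that this double sum equals $-\sum_{s=1}^{m}\partial_s^2 f$.

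The main obstacle, and exactly where non-associativity enters, is the evaluation of $\sum_{s,t=1}^{m}v_s(v_t w_{st})$ with $w_{st}=\partial_s\partial_t f\in\mathbb{A}$. On the diagonal $s=t$, Proposition \ref{artin} gives $[v_s,v_s,w]=0$, hence $v_s(v_s w)=v_s^2 w=-w$, where $v_s^2=-1$ follows from $t(v_s)=0$ (so $v_s^{c}=-v_s$) and $n(v_s)=v_s v_s^{c}=1$. Off the diagonal I would pair the $(s,t)$ and $(t,s)$ terms: using the symmetry $w_{st}=w_{ts}$ of second derivatives together with $v_s(v_t w)=(v_s v_t)w-[v_s,v_t,w]$ and the same for $v_t(v_s w)$, one gets $v_s(v_t w)+v_t(v_s w)=(v_s v_t+v_t v_s)w-[v_s,v_t,w]-[v_t,v_s,w]$. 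Here the anticommutation $v_s v_t=-v_t v_s$ of the hypercomplex basis kills the first summand, and the alternating property of the associator (from the definition of an alternative algebra) gives $[v_t,v_s,w]=-[v_s,v_t,w]$, so the remaining terms cancel as well. Thus every off-diagonal contribution vanishes in pairs, the double sum collapses to $-\sum_{s=1}^{m}\partial_s^2 f$, and the factorization $\partial_{\mathcal{B}}\,\overline{\partial}_{\mathcal{B}}=\Delta_{\mathcal{B}}$ is proved.

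Finally, $\Delta_{\mathcal{B}}f=0$ means that each real component $f_s$ of $f=\sum_{s=0}^{d}v_s f_s$ satisfies the classical Laplace equation on the domain $\Omega\subseteq\mathbb{R}^{m+1}$; by the standard regularity theory for the Euclidean Laplacian, harmonic functions are real analytic, so each $f_s$, and therefore $f$ itself, is real analytic. This concluding step is entirely classical and requires no further algebraic input, the only delicate point of the whole argument being the associator bookkeeping in the factorization step above.
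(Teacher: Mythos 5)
Your proof is correct and takes essentially the same route as the paper: the paper's proof is exactly the one-line factorization $\Delta_{\mathcal{B}}f=(\partial_{\mathcal{B}}\overline{\partial}_{\mathcal{B}})f=\partial_{\mathcal{B}}(\overline{\partial}_{\mathcal{B}}f)=0$, justified by Remark \ref{C1-C00} and Proposition \ref{artin}. Your diagonal/off-diagonal associator bookkeeping is simply the polarized, spelled-out version of the alternativity argument the paper leaves implicit, and your concluding regularity step matches the paper's appeal to classical harmonic-function theory.
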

\begin{proof}
Let $f\in \mathcal {M}(\Omega, \mathbb{A})$.  By Remark \ref{C1-C00}, we have $f\in C^{2}(\Omega,  \mathbb{A})$,  then by \cite[Proposition 5]{Perotti-22}
$$   \Delta_{\mathcal{B}} f(x)= (\partial_{\mathcal{B}}  \overline{\partial}_{\mathcal{B}})f(x)=
\partial_{\mathcal{B}} (\overline{\partial}_{\mathcal{B}}f(x))=0,$$
which completes the proof.
\end{proof}
The identity theorem for real  analytic  functions   gives the following:
\begin{proposition}\label{Identity-theorem-monogenic-harmonic}
Let $\Omega\subseteq M$ be a  domain and $f\in \mathcal{M}(\Omega, \mathbb{A})$.
If $f$ equals to zero in a ball  of $M$, then $f\equiv 0$ in  $\Omega$.
\end{proposition}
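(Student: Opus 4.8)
The plan is to leverage Proposition \ref{GSM-Harm}, which tells us that any $f \in \mathcal{M}(\Omega, \mathbb{A})$ is real analytic in $\Omega$. This reduces the statement to the classical identity theorem for real analytic functions, which is the tool explicitly invoked in the sentence immediately preceding the proposition. Concretely, I would first decompose $f = \sum_{s=0}^{d} v_s f_s$ into its real-valued component functions $f_s \in C^\infty(\Omega)$. Since $f$ is real analytic as an $\mathbb{A}$-valued map with respect to the basis $\mathcal{B}'$, each scalar component $f_s : \Omega \to \mathbb{R}$ is real analytic in the ordinary sense.

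Next I would apply the classical identity theorem componentwise. By hypothesis $f$ vanishes on a ball $B \subseteq M$; since $\Omega \subseteq M$ is identified with a domain in $\mathbb{R}^{m+1}$, this means each $f_s$ vanishes on the corresponding open ball in $\mathbb{R}^{m+1}$. The standard identity theorem for real analytic functions on a connected open set states that if a real analytic function vanishes on a nonempty open subset, then it vanishes identically on the whole connected domain. Applying this to each $f_s$ for $s = 0, 1, \ldots, d$, I conclude $f_s \equiv 0$ throughout $\Omega$, and hence $f = \sum_{s=0}^d v_s f_s \equiv 0$ on $\Omega$.

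The only genuine content beyond bookkeeping is ensuring the passage from ``$\mathbb{A}$-valued real analytic'' to ``each real component real analytic,'' which is immediate because real analyticity for vector-valued maps is defined coordinatewise with respect to a fixed basis, and the connectedness of $\Omega$ (built into the definition of domain) is what makes the identity theorem applicable. I expect no real obstacle here: the non-commutativity and non-associativity of $\mathbb{A}$ play no role since the argument operates purely on the scalar components $f_s$, and all the algebraic subtleties have already been absorbed into establishing harmonicity and hence analyticity in Proposition \ref{GSM-Harm}. Thus the proof is essentially a one-line reduction to a well-known classical result.
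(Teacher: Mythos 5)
Your proposal is correct and matches the paper's argument exactly: the paper also derives this proposition as an immediate consequence of Proposition \ref{GSM-Harm} (monogenicity implies harmonicity, hence real analyticity) together with the classical identity theorem for real analytic functions, which is precisely the one-line reduction you describe. Your componentwise elaboration via the fitted basis $\mathcal{B}'$ is a faithful filling-in of the details the paper leaves implicit.
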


Denote by $\mathcal{Z}_{f}(\Omega)$  the zero set of the function $f:\Omega\subseteq M\rightarrow \mathbb{A}$.
\begin{theorem} \label{Identity-lemma}
Let $\Omega\subseteq M $ be a  domain and $f: \Omega\rightarrow \mathbb{A}$ be a   monogenic function.
If   $\mathcal{Z}_{f}(\Omega)$ is a   $m$-dimensional smooth manifold $N$, then $f\equiv0$ in  $\Omega$.
\end{theorem}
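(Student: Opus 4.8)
The plan is to show that $f$ together with \emph{all} of its partial derivatives vanishes identically on the zero set $N$, and then to conclude via real analyticity (Proposition~\ref{GSM-Harm}) and the identity theorem (Proposition~\ref{Identity-theorem-monogenic-harmonic}). I assume $N\neq\emptyset$, since otherwise the hypothesis is vacuous. Fix a point $p\in N$; because $N$ is an $m$-dimensional, hence codimension-one, smooth submanifold of the $(m+1)$-dimensional domain $\Omega\subseteq M$, let $\nu=(\nu_0,\ldots,\nu_m)$ be a unit normal to $N$ at $p$. The whole argument rests on a single differential identity, which I would isolate as a lemma: \emph{if $h\in\mathcal{M}(\Omega,\mathbb{A})$ and $h\equiv 0$ on $N$, then $\partial_s h\equiv 0$ on $N$ for every $s\in\{0,\ldots,m\}$.}

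To prove the lemma, write $h=\sum_{t=0}^{d}v_t h_t$ with real components $h_t$. Since each $h_t$ vanishes on $N$, differentiating along curves in $N$ gives $\nabla h_t(p)\perp T_pN$, so by codimension one $\nabla h_t(p)=c_t\,\nu$ for a real scalar $c_t$. Hence $\partial_s h(p)=\nu_s\,g$ with $g:=\sum_{t=0}^{d}v_t c_t\in\mathbb{A}$. Feeding this into the monogenicity equation, and using that the $\nu_s$ are real scalars so they may be moved across the product (cf.\ Proposition~\ref{real}), I obtain
$$0=\overline{\partial}_{\mathcal{B}}h(p)=\sum_{s=0}^{m}v_s\,\partial_s h(p)=\Big(\sum_{s=0}^{m}\nu_s v_s\Big)g=Vg,\qquad V:=\sum_{s=0}^{m}\nu_s v_s\in M.$$
Because $(v_0,\ldots,v_m)$ is orthonormal, $n(V)=|V|^2=\sum_s\nu_s^2=1$, so $V$ lies in $M\subseteq Q_{{\mathbb A}}$ and is invertible with $V^{-1}=V^{c}$. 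The non-associative subtlety is then handled by Proposition~\ref{artin-inverse}: from $[V^{-1},V,g]=0$ we get $g=(V^{-1}V)g=V^{-1}(Vg)=0$. Linear independence of the fitted basis forces all $c_t=0$, that is $\nabla h_t(p)=0$ for every $t$, which is exactly $\partial_s h(p)=0$ for all $s$. As $p\in N$ was arbitrary, the lemma follows.

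With the lemma available I would conclude by induction on $|\mathrm{k}|$ that $\partial_{\mathrm{k}}f\equiv 0$ on $N$ for every $\mathrm{k}\in\mathbb{N}^{m+1}$. The base case is the hypothesis $f\equiv 0$ on $N$ (here Remark~\ref{C1-C00} guarantees $f\in C^{\infty}$, so all derivatives exist). For the inductive step, any derivative of order $|\mathrm{k}|+1$ has the form $\partial_s(\partial_{\mathrm{k}}f)$ for some $s\in\{0,\ldots,m\}$; by Proposition~\ref{preserving-monogenic} the function $\partial_{\mathrm{k}}f$ is again monogenic, and by the inductive hypothesis it vanishes on $N$, so the lemma yields $\partial_s(\partial_{\mathrm{k}}f)\equiv 0$ on $N$. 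Therefore every Taylor coefficient of $f$ at $p$ vanishes; since $f$ is real analytic, $f$ vanishes on an entire ball about $p$, and Proposition~\ref{Identity-theorem-monogenic-harmonic} then gives $f\equiv 0$ on the connected domain $\Omega$.

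The individual steps are short, and the one genuinely delicate point is the implication $Vg=0\Rightarrow g=0$, where neither commutativity nor associativity may be used freely. This is precisely where the alternative $\ast$-structure is essential: membership $V\in M\subseteq Q_{{\mathbb A}}$ makes $V$ invertible with conjugate inverse, and Artin's theorem (through Proposition~\ref{artin-inverse}) licenses cancelling $V$ on the left. A secondary point to verify is that the gradient-normality step applies verbatim to every $\partial_{\mathrm{k}}f$; this works because at each stage I only need the function to vanish on $N$, not that $N$ be its full zero set.
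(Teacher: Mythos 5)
Your proposal is correct and takes essentially the same route as the paper: both arguments show that all partial derivatives of $f$ vanish on $N$ by observing that at each point the gradients of the real components are proportional to a single normal direction, so that $\overline{\partial}_{\mathcal{B}}f=0$ reduces to $Vg=0$ for a nonzero $V\in M$, which is cancelled using invertibility in $M$ and Artin's theorem, after which real analyticity and Proposition~\ref{Identity-theorem-monogenic-harmonic} finish the proof. Your unit-normal formulation $\partial_s h(p)=\nu_s g$ is a slightly cleaner, coordinate-free rendering of the paper's parametrization-and-rank argument (it avoids the paper's ``without loss of generality'' normalization to $\partial f/\partial x_0$), but the underlying mechanism is identical.
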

\begin{proof}
We can follow  the strategy as in  Clifford case; see e.g. \cite[Theorem 9.27]{Gurlebeck}. For completeness, we give its details here.
Let $y$ be an arbitrary point in $N \subseteq \Omega$ and
$$x( \theta )  =x( \theta_1,\ldots,\theta_m)=\sum _{s=0}^{m} x_{s}(\theta) v_s$$
  be a parametrization of $N$ in a neighborhood of $y$ with
 $x(0)=y$. Due to that  $ f(x(\theta))=0$ for all $\theta$ and   that the functions $x_s(\theta)$ are real-valued, we have
  \begin{equation*}
  \sum _{s=0}^{m} \frac{\partial x_{s}}{\partial \theta_{t}} \frac{\partial f }{\partial x_{s}}(y)=0, \quad t=1,\ldots,m.
  \end{equation*}
Since $N$ is $m$-dimensional, we  assume that  rank$(\frac{\partial x_{s}}{\partial\theta_{t}})=m.$  Hence, without loss of generality, there exist some  $r_1,\ldots, r_m\in \mathbb{R}$ such that
 $$ \frac{\partial f}{\partial x_{s}}(y)=r_s \frac{\partial f}{\partial x_{0}}(y), \quad s=1,\ldots,m.   $$
Hence
 $$\overline{\partial}_{\mathcal{B}} f(y)= (1+ \sum _{s=1}^{m}  r_s v_s  )\frac{\partial f }{\partial x_{0}}(y)=0.$$
Since $ 1+ \sum _{s=1}^{m}  r_s v_s \in M\setminus \{0\} $ has an inverse, we have  $\frac{\partial f}{\partial x_{0}}(y)=0$, and then
 $$\frac{\partial f}{\partial x_{s}}( y)=0, \quad s=1,\ldots,m.$$
which shows  that all first-order derivatives of the function $f$ vanish in $N$.
Considering   $\frac{\partial f }{\partial x_{s}}$ $(s=0,1,\ldots,m)$ and  repeating the process above, we infer  that all derivatives of $f$ vanish in $N$. Consequently,   all  Taylor coefficients   for the real analytic function $f$ at some point $y$ vanish,  so that $f$   equals to zero in a  ball  in $\Omega$. Finally,  Proposition \ref{Identity-theorem-monogenic-harmonic} concludes that $f=0$ in  $\Omega$, as desired.
 \end{proof}

Theorem \ref{Identity-lemma} can be reformulated as follows.
\begin{theorem}  {\bf(Identity theorem)}\label{Identity-theorem}
Let $\Omega\subseteq M$ be a   domain and $f,g:\Omega\rightarrow \mathbb{A}$ be    monogenic functions.
If   $f=g$ on a   $m$-dimensional smooth manifold in $\Omega$, then $f\equiv g$ in  $\Omega$.
\end{theorem}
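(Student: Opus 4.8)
The plan is to reduce the statement to Theorem \ref{Identity-lemma} by passing to the difference $h := f - g$. First I would observe that the Cauchy-Riemann operator $\overline{\partial}_{\mathcal{B}} = \sum_{s=0}^{m} v_s \partial_s$ is $\mathbb{R}$-linear: it acts componentwise on the real coordinate functions $f_t$ before left multiplication by the fixed basis vectors $v_s$, so that $\overline{\partial}_{\mathcal{B}} h = \overline{\partial}_{\mathcal{B}} f - \overline{\partial}_{\mathcal{B}} g$. Since $f, g \in \mathcal{M}(\Omega, \mathbb{A})$, the right-hand side vanishes, and hence $h$ is again monogenic on $\Omega$. This step uses no associativity and is immediate from Definition \ref{monogenic-Clifford}.

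Next I would note that the hypothesis ``$f = g$ on an $m$-dimensional smooth manifold $N \subseteq \Omega$'' translates precisely into ``$h$ vanishes on $N$'', i.e. $N \subseteq \mathcal{Z}_{h}(\Omega)$. This is formally weaker than the hypothesis of Theorem \ref{Identity-lemma}, which requires the \emph{entire} zero set $\mathcal{Z}_{h}(\Omega)$ to coincide with an $m$-dimensional manifold. The key point, however, is that the proof of Theorem \ref{Identity-lemma} never uses this stronger statement: it fixes an arbitrary $y \in N$, employs only a local parametrization $x(\theta)$ of $N$ together with the vanishing $h(x(\theta)) = 0$ to force, via \eqref{xjti} and the invertibility of $1 + \sum_{s=1}^{m} r_s v_s \in M$, that all first-order derivatives of $h$ vanish at $y$, and then iterates this argument on the derivatives $\partial f / \partial x_s$. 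Thus the entire argument applies verbatim to $h$ under the weaker hypothesis $N \subseteq \mathcal{Z}_{h}(\Omega)$.

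Running that argument for $h$ shows that all partial derivatives of $h$ vanish at $y \in N$; since $h$ is real analytic by Proposition \ref{GSM-Harm}, its Taylor coefficients at $y$ all vanish, so $h \equiv 0$ on a ball around $y$, and then Proposition \ref{Identity-theorem-monogenic-harmonic} upgrades this to $h \equiv 0$ on the whole domain $\Omega$, i.e. $f \equiv g$. The only delicate point, and the sole obstacle, is justifying that the proof of Theorem \ref{Identity-lemma} indeed needs merely the vanishing of the monogenic function along $N$ rather than exhaustion of its zero set by $N$; once this reading is granted, the reformulation follows at once.
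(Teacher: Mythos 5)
Your proposal is correct and coincides with the paper's own proof: the paper simply declares the identity theorem to be a reformulation of Theorem \ref{Identity-lemma}, i.e.\ it applies that result to the monogenic difference $h=f-g$ exactly as you do. Your additional observation --- that the hypothesis of Theorem \ref{Identity-lemma} literally requires $\mathcal{Z}_h(\Omega)$ to \emph{be} the manifold $N$, whereas here one only knows $N\subseteq\mathcal{Z}_h(\Omega)$, and that the lemma's proof uses nothing beyond the vanishing of $h$ along a local parametrization of $N$ --- is a valid point of care that the paper passes over in silence.
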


\subsection{Integrals  on a hypercomplex subspace}
In  this subsection,   we   fix some notations on  integrals over   real alternative $\ast$-algebras, which shall be used in next section.
\begin{definition}\label{integral}
Let    $\Omega $ be a   domain in $M$. For $f=\sum_{t=0}^{d}f_tv_t :  \Omega $  $\rightarrow \mathbb{A}  $ with real-valued integrable  components  $f_t$, define
$$\int_{\Omega} f dV:=\sum_{t=0}^{d}v_t \big(\int_{\Omega}f_t dV\big),  $$
where $dV$ stands  for  the   classical   Lebesgue  volume element  in $\mathbb{R}^{m+1}$.
\end{definition}
Note that, in the case of the associative  algebras, Definition \ref{integral}  has been given by Ghiloni and Stoppato in \cite[Definition 3.7]{Ghiloni-Stoppato-24-2}. By definition, the integral  in Definition \ref{integral}  has basic  properties below, whose proof      is the same as that  in     \cite[Proposition 3.8]{Ghiloni-Stoppato-24-2} and is omitted  here.
\begin{proposition}\label{absolute-domain}
Let $f, g$ be as in Definition \ref{integral}. The following properties hold true for all $a, b \in \mathbb{A}$:\\
(i)  $\int_{\Omega} f dV=\int_{\Omega_1} f dV+\int_{\Omega_2} f dV$, where $ \Omega=\Omega_1 \sqcup \Omega_2$;\\
(ii) $\int_{\Omega} (fa+gb) dV=(\int_{\Omega} f dV) a+(\int_{\Omega} gdV) b;$\\
(iii) $\int_{\Omega} (af+bg) dV=a(\int_{\Omega} f dV)  +b(\int_{\Omega} gdV);$\\
(iv)  $|\int_{\Omega} f dV| \leq\int_{\Omega} |f| dV$.
\end{proposition}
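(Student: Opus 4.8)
The plan is to reduce all four assertions to a single observation about Definition \ref{integral}: the map $f\mapsto\int_\Omega f\,dV$ is simply the componentwise real Lebesgue integral followed by reassembly against the basis $\mathcal{B}'$, so it is $\mathbb{R}$-linear and, crucially, it commutes with the pointwise application of any fixed $\mathbb{R}$-linear endomorphism of $\mathbb{A}$. Concretely, if $L:\mathbb{A}\to\mathbb{A}$ is $\mathbb{R}$-linear with real matrix $(\ell_{st})$ relative to $\mathcal{B}'$, then writing $f=\sum_t f_t v_t$ one has $L\circ f=\sum_s(\sum_t\ell_{st}f_t)v_s$, and pulling the real constants $\ell_{st}$ through the scalar integral gives $\int_\Omega(L\circ f)\,dV=L\big(\int_\Omega f\,dV\big)$. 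I would record this as the central lemma, noting that the same identity holds verbatim for any $\mathbb{R}$-linear \emph{functional} $\mathbb{A}\to\mathbb{R}$.

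Part (i) is then immediate: for each real component $f_t$ the additivity $\int_\Omega f_t=\int_{\Omega_1}f_t+\int_{\Omega_2}f_t$ over the disjoint decomposition $\Omega=\Omega_1\sqcup\Omega_2$ is the classical additivity of the Lebesgue integral, and multiplying by $v_t$ and summing yields the claim.

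For parts (ii) and (iii) I would take $L$ to be right multiplication $R_a:x\mapsto xa$ and left multiplication $L_a:x\mapsto ax$ respectively, both of which are $\mathbb{R}$-linear. The one point demanding care — and the place where non-associativity could a priori intrude — is the identity $\int_\Omega fa\,dV=(\int_\Omega f\,dV)a$. Here the components $f_t$ are \emph{real}, so Proposition \ref{real} guarantees $(f_t v_t)a=f_t(v_t a)$ and $(v_t F_t)a=F_t(v_t a)$ with $F_t:=\int_\Omega f_t\,dV\in\mathbb{R}$; thus the scalar factors may be moved freely across the product and the associator never obstructs the computation. Combining $R_a\circ f=fa$ and $L_a\circ f=af$ with the central lemma and with $\mathbb{R}$-linearity of the integral (to split $fa+gb$ and $af+bg$) delivers both (ii) and (iii).

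For part (iv) I would fix $I:=\int_\Omega f\,dV$, dispose of the trivial case $I=0$, and introduce the $\mathbb{R}$-linear functional $\lambda(x):=\langle x,I\rangle/|I|$. By Cauchy--Schwarz $|\lambda(x)|\le|x|$ for all $x$, while $\lambda(I)=|I|$. Applying the central lemma to $\lambda$ gives $|I|=\lambda(I)=\int_\Omega\lambda(f)\,dV$, and since $\lambda(f)$ is real-valued the ordinary triangle inequality for the real integral yields $\int_\Omega\lambda(f)\,dV\le\int_\Omega|\lambda(f)|\,dV\le\int_\Omega|f|\,dV$, the desired estimate. I do not expect a genuine obstacle anywhere: the entire difficulty of the non-associative setting is absorbed by the fact that the components $f_t$ are real and hence central and associative by Proposition \ref{real}, so that every product encountered factors through an $\mathbb{R}$-linear map to which the componentwise integral is transparent.
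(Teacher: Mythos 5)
Your proof is correct, and there is nothing in the paper to diverge from: the paper explicitly omits the argument for this proposition, stating that it is the same as in the associative case of \cite[Proposition 3.8]{Ghiloni-Stoppato-24-2}. Your route is precisely that standard argument --- componentwise reduction via the fitted basis for (i)--(iii) (where, as you note, only real scalars $f_t(x)$ ever cross a product, so Proposition \ref{real} suffices and associativity of $\mathbb{A}$ never enters) and the duality functional $\lambda(x)=\langle x, I\rangle/|I|$ for (iv), which is the classical proof of the triangle inequality for vector-valued integrals; your ``central lemma'' that the integral commutes with every fixed $\mathbb{R}$-linear map is a clean unification of these steps, not a different method.
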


Similarly, we have
 \begin{definition}\label{integral-surface}
Let $\Gamma$ be a  surface in $M$. For $f=\sum_{t=0}^{d}f_tv_t : \Gamma\rightarrow \mathbb{A}  $ with real-valued integrable  components  $f_t$, define
$$\int_{\Gamma} f dS:=\sum_{t=0}^{d}v_t \big(\int_{\Gamma}f_t dS\big),  $$
where $dS$ stands  for  the   classical   Lebesgue  surface element  in $\mathbb{R}^{m+1}$.
\end{definition}
\begin{proposition}\label{absolute-boundary}
Let $f, g$ be as in Definition \ref{integral-surface}. The following properties hold true for all $a, b \in \mathbb{A}$:\\
(i)  $\int_{\Gamma} f dS=\int_{\Gamma_1} f dS+\int_{\Gamma_2} f dS$, where $\Gamma=\Gamma_1 \sqcup \Gamma_2$;\\
(ii) $\int_{\Gamma} (fa+gb) dS=(\int_{\Gamma} f dS) a+(\int_{\Gamma} gdS) b;$\\
(iii) $\int_{\Gamma} (af+bg) dS=a(\int_{\Gamma} f dS)  +b(\int_{\Gamma} gdS);$\\
(iv)  $  |\int_{\Gamma} f dS | \leq \int_{\Gamma} |f| dS.$
\end{proposition}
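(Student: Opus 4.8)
The plan is to reduce every assertion to the componentwise definition of the surface integral in Definition \ref{integral-surface} and to exploit the fact that the integrals of the real-valued components are real scalars, which by Proposition \ref{real} are central and associate freely with every element of $\mathbb{A}$. Throughout I would write $f=\sum_{t=0}^{d}f_tv_t$, $g=\sum_{t=0}^{d}g_tv_t$ and set $c_t:=\int_{\Gamma}f_t\,dS\in\mathbb{R}$. Item (i) then follows at once from the additivity of the scalar surface integral over the disjoint decomposition $\Gamma=\Gamma_1\sqcup\Gamma_2$ applied to each real component $f_t$, followed by multiplication by $v_t$ and summation over $t$.

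For the linearity statements (ii) and (iii) I would expand the products in the basis $\mathcal{B}'$. For the right multiplication $fa$, writing $v_ta=\sum_{s}(v_ta)_sv_s$ with real coefficients $(v_ta)_s$, one gets $fa=\sum_{s}\big(\sum_t f_t(v_ta)_s\big)v_s$, whence $\int_{\Gamma}(fa)\,dS=\sum_{s}v_s\sum_t (v_ta)_sc_t$; on the other side $\big(\int_{\Gamma}f\,dS\big)a=\sum_s v_s\sum_t c_t(v_ta)_s$, and the two coincide because the $c_t$ and $(v_ta)_s$ are real numbers. The left-multiplication factor $af$ and the additive contributions from $g$ are treated identically, giving (ii) and (iii).

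For (iv) I would pass to the vector picture. Since $|\cdot|$ is the Euclidean norm associated to the orthonormal basis $\mathcal{B}'$, one has $\big|\int_{\Gamma}f\,dS\big|^2=\sum_t c_t^2$ and $|f|=\big(\sum_t f_t^2\big)^{1/2}$. Setting $\mathbf{c}=(c_0,\ldots,c_d)$ and pairing it with itself,
$$|\mathbf{c}|^2=\sum_t c_t\int_{\Gamma}f_t\,dS=\int_{\Gamma}\Big(\sum_t c_tf_t\Big)\,dS\le\int_{\Gamma}|\mathbf{c}|\,|f|\,dS=|\mathbf{c}|\int_{\Gamma}|f|\,dS,$$
using the linearity of the scalar integral and the pointwise Cauchy--Schwarz inequality $\sum_t c_tf_t\le|\mathbf{c}|\,|f|$. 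Dividing by $|\mathbf{c}|$ when it is nonzero (the case $\mathbf{c}=0$ being trivial) yields the desired estimate.

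I expect the only genuine obstacle to be conceptual rather than computational: one must check that the non-associativity of $\mathbb{A}$ does not corrupt the linearity in (ii) and (iii). The resolution is exactly that the scalar integrals $c_t$ are real, so that the associators $[c_t,v_t,a]$ and $[a,c_t,v_t]$ vanish by Proposition \ref{real}, allowing the real factors to be pulled across the products at will. Once this is noted, the whole argument runs entirely parallel to the volume-integral case of Proposition \ref{absolute-domain}.
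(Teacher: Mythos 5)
Your proof is correct, and it is essentially the argument the paper has in mind: the paper omits the proof entirely, noting it is identical to the volume-integral case (Proposition \ref{absolute-domain}, deferred to \cite[Proposition 3.8]{Ghiloni-Stoppato-24-2}), and that standard proof is exactly your componentwise reduction, with Proposition \ref{real} neutralizing non-associativity for the real scalar coefficients and the Cauchy--Schwarz duality trick yielding (iv). Nothing further is needed, though you might note in passing that the $dV$ appearing in item (ii) of the statement is a typo for $dS$.
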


\section{Cauchy-Pompeiu integral formula}
\label{Sect4}
To formulate the Cauchy-Pompeiu integral formula over the alternative  algebras, we need to establish   some technical lemmas.
In the  octonionic setting, we mention that the corresponding  results  in following Lemmas \ref{E-lemma},  \ref{Cauchy-formula-lemma-2-0}, \ref{Cauchy-formula-lemma}, and \ref{Cauchy-formula-lemma-E} were obtained in \cite{Xu-Sabadini-25-o},   but in a very general   partial-slice form.

\begin{lemma}\label{E-lemma}
Let    $\Omega $ be a   domain in $M$. If $\phi(x)=\sum_{s=0}^m \phi_s(x) v_s \in C^1(\Omega, M)$ satisfies
\begin{equation}\label{i-j-E-1}
\partial_{ s} \phi_t= \partial_{ t}\phi_s, \quad 1\leq s,t\leq m,
\end{equation}
then for all $a\in \mathbb{A}$
$$ \overline{\partial}_{\mathcal{B}} (\phi  a)=(\overline{\partial}_{\mathcal{B}} \phi )a.$$
\begin{proof}
Recall that for all $a, b\in \mathbb{A}$,
$$[v_0, b, a]=0, $$
thus we have
\begin{eqnarray*}
[\overline{\partial}_{\mathcal{B}}, \phi, a]
&=&\sum_{s=1}^{m}[v_s, \partial_{ s} \phi, a]
 \\
 &=&  \sum_{s,t=1}^{m}[v_s, v_t, a] \partial_{ s} \phi_{t}
 \\
 &=& \sum_{1\leq s < t \leq m} [v_s, v_t, a] ( \partial_{ s}\phi_{t}-\partial_{ t} \phi_{s})
 \\
  &=& 0,
\end{eqnarray*}
where  the last equality  follows from   (\ref{i-j-E-1}).
Hence,
 $$ (\overline{\partial}_{\mathcal{B}}  \phi)  a- \overline{\partial}_{\mathcal{B}} (\phi )a=0,$$
 i.e.
$$ \overline{\partial}_{\mathcal{B}} (\phi  a)=(\overline{\partial}_{\mathcal{B}} \phi )a.$$
The proof is complete.
\end{proof}
\end{lemma}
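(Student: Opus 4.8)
The plan is to expand the associator $[\overline{\partial}_{\mathcal{B}}, \phi, a]$ term by term, exploit the fact that $\phi$ and $a$ appear with constant coefficients $v_s$, and reduce everything to the elementary associators $[v_s, v_t, a]$. More precisely, since $\overline{\partial}_{\mathcal{B}} = \sum_{s=0}^m v_s \partial_s$ is a differential operator with the constant coefficients $v_s$, I would first argue that for the purposes of the associator identity the operator behaves like ``left multiplication'' by the formal symbol $\sum_s v_s \partial_s$, so that
$$[\overline{\partial}_{\mathcal{B}}, \phi, a] := (\overline{\partial}_{\mathcal{B}}\phi)a - \overline{\partial}_{\mathcal{B}}(\phi a)$$
splits as a sum $\sum_{s=0}^m [v_s, \partial_s \phi, a]$, where each $\partial_s \phi = \sum_{t=0}^m (\partial_s \phi_t) v_t$ is again $M$-valued. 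The key point justifying this is that the real-valued derivatives $\partial_s \phi_t$ can be pulled out of the associator by Proposition~\ref{real} (associators with a real entry vanish, so these scalars factor through), leaving $[v_s, \partial_s\phi, a] = \sum_{t=0}^m [v_s, v_t, a]\,\partial_s\phi_t$.

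Next I would eliminate the terms involving $v_0 = 1$. The $s=0$ summand vanishes because $[v_0, \partial_0\phi, a] = [1, \partial_0\phi, a] = 0$ again by Proposition~\ref{real} (or directly since $1$ associates with everything). Within the remaining double sum $\sum_{s,t=1}^m [v_s, v_t, a]\,\partial_s\phi_t$, the terms with $t=0$ likewise drop out since $[v_s, v_0, a] = [v_s, 1, a] = 0$, so only $1 \le s,t \le m$ survive, which is precisely the range where the hypothesis~(\ref{i-j-E-1}) applies.

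The decisive step is then to use the \emph{alternating} property of the associator on an alternative algebra: $[v_s, v_t, a]$ is antisymmetric in its first two slots, so $[v_s, v_s, a] = 0$ kills the diagonal and $[v_t, v_s, a] = -[v_s, v_t, a]$ lets me pair off-diagonal terms. Grouping $s<t$ with $t<s$ gives
$$\sum_{s,t=1}^m [v_s, v_t, a]\,\partial_s\phi_t = \sum_{1\le s<t\le m} [v_s, v_t, a]\bigl(\partial_s\phi_t - \partial_t\phi_s\bigr),$$
and each bracket vanishes by the compatibility condition~(\ref{i-j-E-1}). Hence $[\overline{\partial}_{\mathcal{B}}, \phi, a] = 0$, which rearranges to the claimed identity $\overline{\partial}_{\mathcal{B}}(\phi a) = (\overline{\partial}_{\mathcal{B}}\phi)a$.

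I expect the main obstacle to be conceptual rather than computational: making rigorous the idea that a \emph{differential operator} can be inserted into an algebraic associator. The cleanest way around this is to note that $a$ is a constant element of $\mathbb{A}$ and each $v_s$ is a constant, so by the Leibniz rule $\overline{\partial}_{\mathcal{B}}(\phi a) = \sum_s v_s (\partial_s \phi) a$ while $(\overline{\partial}_{\mathcal{B}}\phi)a = \sum_s (v_s \partial_s\phi)a$; subtracting gives exactly $\sum_s [v_s, \partial_s\phi, a]$ as an honest pointwise algebraic expression at each $x \in \Omega$, at which point the alternating-associator argument applies verbatim. Everything else is a routine bookkeeping of indices, relying only on the alternating property built into the definition of an alternative algebra together with Propositions~\ref{real} and~\ref{artin}.
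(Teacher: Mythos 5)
Your proposal is correct and follows essentially the same route as the paper: expand $[\overline{\partial}_{\mathcal{B}},\phi,a]=(\overline{\partial}_{\mathcal{B}}\phi)a-\overline{\partial}_{\mathcal{B}}(\phi a)$ into $\sum_{s}[v_s,\partial_s\phi,a]$, pull out the real scalars $\partial_s\phi_t$ to reduce to $\sum_{s,t=1}^{m}[v_s,v_t,a]\,\partial_s\phi_t$ (the $v_0=1$ terms vanishing), and then kill the sum by pairing $s<t$ with $t<s$ via the alternating property of the associator together with the symmetry hypothesis (\ref{i-j-E-1}). Your explicit Leibniz-rule justification for inserting the differential operator into the associator is a point the paper leaves implicit, but it is the same argument.
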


\begin{lemma}\label{Cauchy-formula-lemma-2-0}
  For any $a\in \mathbb{A}$, it holds that
$$ \overline{\partial}_{\mathcal{B}} (E(x)a)=0, \quad  x \neq 0.$$
\end{lemma}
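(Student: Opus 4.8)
The plan is to recognize this statement as a direct consequence of Lemma \ref{E-lemma} applied to the Cauchy kernel $\phi = E$, together with the left monogenicity of $E$ recorded in Example \ref{Cauchy-kernel-example}. The essential point is that non-associativity forbids the naive manipulation $\overline{\partial}_{\mathcal{B}}(E(x)a) = (\overline{\partial}_{\mathcal{B}} E(x))a$ in general, since pulling $a$ out across the factors $v_s v_t$ would generate associators $[v_s, v_t, a]$; Lemma \ref{E-lemma} isolates precisely the hypothesis (\ref{i-j-E-1}) under which these associators cancel and the pull-out becomes legitimate.

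First I would check that $E$ is an $M$-valued $C^1$ function on $\Omega = M \setminus \{0\}$, so that it qualifies as an admissible $\phi$ in Lemma \ref{E-lemma}. Since $v_0 = 1$ and $v_s^{c} = -v_s$ for $s = 1, \ldots, m$ (because $t(v_s) = v_s + v_s^{c} = 0$), for $x = \sum_{s=0}^m x_s v_s \in M$ one has $x^{c} = x_0 - \sum_{s=1}^m x_s v_s \in M$. Hence $E(x) = \frac{1}{\sigma_m} x^{c}/|x|^{m+1}$ indeed takes values in $M$ and is smooth away from the origin.

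Next I would verify the compatibility condition (\ref{i-j-E-1}) for the components of $E$. Writing $E = \sum_{s=0}^m E_s v_s$, one reads off $E_0 = \frac{1}{\sigma_m}\, x_0/|x|^{m+1}$ and $E_s = -\frac{1}{\sigma_m}\, x_s/|x|^{m+1}$ for $s \geq 1$. A short differentiation gives, for $1 \leq s, t \leq m$,
$$\partial_{s} E_t = -\frac{1}{\sigma_m}\left( \frac{\delta_{st}}{|x|^{m+1}} - (m+1)\frac{x_s x_t}{|x|^{m+3}} \right),$$
which is manifestly symmetric in $s$ and $t$; hence $\partial_{s} E_t = \partial_{t} E_s$ and the hypothesis of Lemma \ref{E-lemma} holds on $\Omega$.

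With these two checks in place, Lemma \ref{E-lemma} yields $\overline{\partial}_{\mathcal{B}}(E(x)a) = (\overline{\partial}_{\mathcal{B}} E(x))a$ for every $a \in \mathbb{A}$, and since $E \in \mathcal{M}^{L}(\Omega, \mathbb{A})$ by Example \ref{Cauchy-kernel-example}, the right-hand side vanishes, giving the claim. I do not anticipate a genuine obstacle: the only points requiring care are confirming the two hypotheses of Lemma \ref{E-lemma}, namely that $E$ is $M$-valued and that its imaginary components have symmetric mixed first derivatives, since the associativity difficulties that would otherwise block the argument are already absorbed into that lemma.
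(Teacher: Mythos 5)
Your proposal is correct and takes essentially the same route as the paper: both verify the symmetry condition (\ref{i-j-E-1}) for the components of the Cauchy kernel (your formula for $\partial_s E_t$ agrees with the paper's (\ref{Eij})) and then apply Lemma \ref{E-lemma} to pull $a$ out, concluding from $\overline{\partial}_{\mathcal{B}}E=0$. Your additional checks that $E$ is $M$-valued and your explicit appeal to Example \ref{Cauchy-kernel-example} for the monogenicity of $E$ merely spell out hypotheses the paper's proof leaves implicit.
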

\begin{proof}
 Recall the Cauchy kernel in Example \ref{Cauchy-kernel-example}
 $$E(x)=E_{0}(x)-\sum_{s=1}^{m}E_{s}(x)v_s, \quad E_{s}(x)=\frac{1}{\sigma_{m}}\frac{x_s}{| x|^{m+1}}, \ s=0,1,\ldots,m.$$
It is immediate that
 \begin{equation}\label{Eij}
\partial_{ t} E_{s}= \frac{1}{\sigma_{m}}(\frac{\delta_{st}}{|x|^{m+1}}-(m+1)\frac{x_tx_s}{| x|^{m+3}})=\partial_{ s} E_{t},\ s,t=1,\ldots,m,\end{equation}
which   satisfy (\ref{i-j-E-1}).
Hence   Lemma \ref{E-lemma} completes the proof. \end{proof}

\begin{lemma}\label{Cauchy-formula-lemma}
Let   $\Omega $ be a   domain in $M$. If $\phi =\sum_{s=0}^m \phi_s  v_s \in C^1(\Omega, M) $  satisfies
\begin{equation}\label{i-j}
\partial_{s}\phi_t= \partial_{t}\phi_s, \quad 1\leq s,t\leq m,
\end{equation}
 then  for any $a\in \mathbb{A}$
$$\sum_{s=0}^{m}[v_s, \overline{\partial}_{\mathcal{B}} \phi_s, a]=0.$$
\end{lemma}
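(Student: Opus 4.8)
The plan is to expand the associators in the fixed basis $(v_0,\ldots,v_m)$ and then reduce the sum, by means of the algebraic facts in Propositions \ref{real} and \ref{artin} together with the alternating property of the associator, to a sum over unordered pairs of distinct indices in $\{1,\ldots,m\}$, each weighted by a difference $\partial_r\phi_s-\partial_s\phi_r$; the hypothesis (\ref{i-j}) then annihilates every such difference. This parallels the computation in the proof of Lemma \ref{E-lemma}, the essential difference being that the asymmetric index range in (\ref{i-j}) is tailored precisely to the pairs that actually survive.

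First I would write $\overline{\partial}_{\mathcal{B}}\phi_s=\sum_{r=0}^m v_r\,\partial_r\phi_s$, recalling that the components $\phi_s$ are real-valued. Substituting this and using the trilinearity of the associator together with the fact that real scalars factor through it (a consequence of Proposition \ref{real} and the centrality of $\mathbb{R}$), I obtain
$$\sum_{s=0}^m[v_s,\overline{\partial}_{\mathcal{B}}\phi_s,a]=\sum_{s=0}^m\sum_{r=0}^m[v_s,v_r,a]\,\partial_r\phi_s.$$

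Next I would discard the vanishing terms. When $s=0$ or $r=0$, one argument of the associator is $v_0=1\in\mathbb{R}$, so the term vanishes by Proposition \ref{real}; when $s=r$, the term vanishes by Proposition \ref{artin} (equivalently, by the alternating property, since $v_s\in M$). This leaves $\sum_{1\leq s\neq r\leq m}[v_s,v_r,a]\,\partial_r\phi_s$. Pairing the contributions indexed by $(s,r)$ and $(r,s)$ for each $s<r$ and invoking the alternating identity $[v_r,v_s,a]=-[v_s,v_r,a]$, I collapse the double sum to
$$\sum_{1\leq s<r\leq m}[v_s,v_r,a]\,(\partial_r\phi_s-\partial_s\phi_r).$$

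Finally I would apply the hypothesis. For every pair with $1\leq s<r\leq m$ one has $s\leq m-1$, so choosing the indices $(r,s)$ in (\ref{i-j}) is legitimate (since $r\leq m$ and $s\leq m-1$) and yields $\partial_r\phi_s=\partial_s\phi_r$; hence each summand, and therefore the whole expression, vanishes. I expect the only genuinely delicate point to be this final matching: one must verify that the asymmetric range $1\leq s\leq m$, $1\leq t\leq m-1$ of (\ref{i-j}) covers exactly the pairs left after antisymmetrization, namely those whose smaller index is at most $m-1$. Everything else is routine manipulation of the multilinear, alternating associator.
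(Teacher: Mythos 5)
Your proof is correct and follows essentially the same route as the paper's: expand the associators over the basis, discard the terms involving $v_0$ and the diagonal via Propositions \ref{real} and \ref{artin}, antisymmetrize in the first two slots, and kill each difference $\partial_r\phi_s-\partial_s\phi_r$ using (\ref{i-j}), noting that the smaller index of any surviving pair is at most $m-1$. The only cosmetic difference is that you antisymmetrize the full sum over $1\leq s<r\leq m$ in one pass, whereas the paper first separates the block $1\leq s,t\leq m-1$ from the cross terms with index $m$ and treats the two pieces in turn; the underlying cancellation is identical.
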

\begin{proof}
Recall that $[v_0, b, a]=0, $ for all $a, b\in \mathbb{A}$ so that
\begin{eqnarray*}
\sum_{s=0}^{m}[v_s, \overline{\partial}_{\mathcal{B}} \phi_s, a]
&=&\sum_{s=1}^{m}[v_s, \overline{\partial}_{\mathcal{B}} \phi_s, a]
 \\
 &=&  \sum_{s =1}^{m} \sum_{ t=0}^{m-1} [v_s, v_t, a] \partial_{ t} \phi_{s} +\sum_{s=1}^{m}[v_s, v_m, a]\partial_{m} \phi_{s}
 \\
 &=&  \sum_{s =1}^{m} \sum_{ t=1}^{m-1} [v_s, v_t, a] \partial_{ t} \phi_{s} +\sum_{s=1}^{m-1}[v_s, v_m, a]\partial_{m} \phi_{s}.
 \end{eqnarray*}
In view of (\ref{i-j}), it holds that
$$\sum_{1\leq s,t\leq m-1}   [v_s,v_t, a]\partial_{t} \phi_s =0.$$
Hence, we have
\begin{eqnarray*}
\sum_{s=0}^{m}[v_s, \overline{\partial}_{\mathcal{B}} \phi_s, a]
&=&  \sum_{ t=1}^{m-1} [v_m, v_t, a] \partial_{ t} \phi_{m} +\sum_{s=1}^{m-1}[v_s, v_m, a]\partial_{m} \phi_{s}
\\
&=&    \sum_{ s=1}^{m-1} [v_m, v_s, a] \partial_{s} \phi_{m} +\sum_{s=1}^{m-1}[v_s, v_m, a]\partial_{m} \phi_{s}
\\
 &=&\sum_{ s=1}^{m-1} [v_m, v_s, a]( \partial_{s} \phi_{m}-\partial_{m} \phi_{s})
 \\
 &=&0,\end{eqnarray*}
which completes from  (\ref{i-j}) the proof.
\end{proof}

\begin{lemma}\label{Cauchy-formula-lemma-E}
  For any $a\in \mathbb{A}$, it holds that
$$\sum_{s=0}^{m}[v_s, \overline{\partial}_{\mathcal{B}} E_s(x), a]=0, \quad  x\neq 0.$$
\end{lemma}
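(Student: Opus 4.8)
The plan is to recognize this statement as nothing more than the specialization of Lemma \ref{Cauchy-formula-lemma} to the particular $M$-valued field furnished by the Cauchy kernel, so that no computation beyond what was already recorded in the proof of Lemma \ref{Cauchy-formula-lemma-2-0} is needed. Concretely, I would set $\phi = E$ on the domain $\Omega = M\setminus\{0\}$ (which is connected since $\dim M = m+1\geq 2$) and check that $E$ meets the two standing hypotheses of Lemma \ref{Cauchy-formula-lemma}: that it is an $M$-valued $C^1$ map, and that its real components obey the symmetry condition (\ref{i-j}).

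First I would confirm that $E$ is genuinely $M$-valued, so that Lemma \ref{Cauchy-formula-lemma} applies verbatim. Since $v_0^{c}=v_0$ and $v_s^{c}=-v_s$ for $s\geq 1$, one has $x^{c}=x_0-\sum_{s=1}^{m}x_s v_s\in M$ for every $x\in M$; hence $E(x)=\frac{1}{\sigma_m}\frac{x^{c}}{|x|^{m+1}}$ lies in $M$, with real components $E_0(x)=\frac{1}{\sigma_m}\frac{x_0}{|x|^{m+1}}$ and $E_s(x)=\frac{-1}{\sigma_m}\frac{x_s}{|x|^{m+1}}$ for $s=1,\ldots,m$. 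Each $E_s$ is smooth, hence $C^1$, on $M\setminus\{0\}$, so indeed $E\in C^1(\Omega,M)$.

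The only substantive point is the symmetry hypothesis, and this has already been established: the identity (\ref{Eij}) proved for Lemma \ref{Cauchy-formula-lemma-2-0} gives $\partial_s E_t=\partial_t E_s$ for all $1\leq s,t\leq m$. In particular it holds on the restricted index range $1\leq s\leq m$, $1\leq t\leq m-1$ demanded by (\ref{i-j}), so $\phi=E$ satisfies (\ref{i-j}). With both hypotheses confirmed, I would invoke Lemma \ref{Cauchy-formula-lemma} directly to conclude that, for every $a\in\mathbb{A}$,
$$\sum_{s=0}^{m}[v_s, \overline{\partial}_{\mathcal{B}} E_s(x), a]=0, \qquad x\neq 0,$$
which is exactly the asserted identity. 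There is essentially no obstacle here; the sole care required is bookkeeping, namely verifying that the already-proved relation (\ref{Eij}) covers the weaker index range of the hypothesis of Lemma \ref{Cauchy-formula-lemma} and that the explicit kernel is $M$-valued, after which the lemma closes the argument immediately.
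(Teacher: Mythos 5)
Your proposal is correct and follows exactly the paper's own route: the paper likewise proves this lemma by noting that the kernel components satisfy the symmetry relations (\ref{Eij}), which cover the index range required by (\ref{i-j}), and then invoking Lemma \ref{Cauchy-formula-lemma}. Your additional verifications (that $E$ is $M$-valued and $C^1$ on $M\setminus\{0\}$) are sound bookkeeping that the paper leaves implicit.
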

\begin{proof}
Recalling  that the Cauchy kernel $E$   satisfies (\ref{Eij}),  we
 can conclude the proof by  Lemma \ref{Cauchy-formula-lemma}.
\end{proof}

Now we can describe  a Gauss theorem  over   real alternative $\ast$-algebras, which generalizes    \cite[Theorem 3.11]{Ghiloni-Stoppato-24-2} from associative algebras to alternative  algebras.
\begin{lemma}[Gauss]\label{Cauchy-formula-lemma-2}
Let     $\Omega$ be a bounded  domain in $M$ with smooth boundary $\partial \Omega$. If  $\phi=\sum_{s=0}^{m}\phi_sv_s \in C^1( \overline{\Omega},  M)$ and $f\in C^1( \overline{\Omega},\mathbb{A})$, then
 $$\int_{\partial \Omega}   \phi    (n f)dS= \int_{\Omega} \Big((\phi \overline{\partial}_{\mathcal{B}}  )f + \phi (\overline{\partial}_{\mathcal{B}} f)
  - \sum_{s=0}^{m} [v_s, \overline{\partial}_{\mathcal{B}} \phi_s, f]\Big)dV,
$$
where  $n$ is the unit exterior normal to $\partial \Omega$, $dS$ and  $dV$ stand  for  the   classical   Lebesgue surface  element and volume element  in $\mathbb{R}^{m+1}$, respectively.
\end{lemma}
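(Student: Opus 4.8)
The plan is to reduce everything to the classical divergence theorem in $\mathbb{R}^{m+1}$ applied componentwise, and then to track the failure of associativity through the associator. First I would observe that, writing $n=\sum_{s=0}^m n_s v_s$ with real-valued components $n_s$, the integrand on the left is $\phi(nf)=\sum_{s=0}^m n_s\,\phi(v_s f)$, since the $n_s$ are real scalars. Viewing each $\mathbb{A}$-valued function $\phi(v_s f)$ through its real components and invoking the linearity of the surface and volume integrals (Propositions \ref{absolute-boundary} and \ref{absolute-domain}), the ordinary divergence theorem applied coordinatewise gives
$$\int_{\partial\Omega}\phi(nf)\,dS=\int_\Omega\sum_{s=0}^m\partial_s\big[\phi(v_s f)\big]\,dV.$$

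Next I would expand the divergence. Since multiplication in $\mathbb{A}$ is $\mathbb{R}$-bilinear, $\partial_s$ acts as a derivation for it, and $v_s$ is constant, so
$$\sum_{s=0}^m\partial_s\big[\phi(v_s f)\big]=\sum_{s=0}^m(\partial_s\phi)(v_s f)+\sum_{s=0}^m\phi(v_s\partial_s f).$$
The second sum is immediate: left multiplication by $\phi$ is $\mathbb{R}$-linear, so it equals $\phi\big(\sum_{s=0}^m v_s\partial_s f\big)=\phi(\overline{\partial}_{\mathcal{B}}f)$. For the first sum I would introduce the associator to regroup, using $(\partial_s\phi)(v_s f)=\big((\partial_s\phi)v_s\big)f-[(\partial_s\phi),v_s,f]$. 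Summing over $s$ and using that right multiplication by $f$ is $\mathbb{R}$-linear turns $\sum_s\big((\partial_s\phi)v_s\big)f$ into $\big(\sum_s(\partial_s\phi)v_s\big)f=(\phi\overline{\partial}_{\mathcal{B}})f$, leaving
$$\sum_{s=0}^m(\partial_s\phi)(v_s f)=(\phi\overline{\partial}_{\mathcal{B}})f-\sum_{s=0}^m[(\partial_s\phi),v_s,f].$$

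The final step, and the only place where care with non-associativity really matters, is matching this correction term with the one in the statement. Writing $\phi=\sum_{t=0}^m\phi_t v_t$ with real-valued $\phi_t$ and using the trilinearity of the associator together with Proposition \ref{real} (so that real scalars pull freely out of every slot), I obtain $\sum_s[(\partial_s\phi),v_s,f]=\sum_{s,t}(\partial_s\phi_t)[v_t,v_s,f]$; on the other side, $\overline{\partial}_{\mathcal{B}}\phi_s=\sum_t v_t\partial_t\phi_s$ yields $\sum_s[v_s,\overline{\partial}_{\mathcal{B}}\phi_s,f]=\sum_{s,t}(\partial_t\phi_s)[v_s,v_t,f]$. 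These two double sums coincide after the relabeling $s\leftrightarrow t$, so the two correction terms are identical. Substituting back and integrating over $\Omega$ then gives exactly the claimed formula.

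The main obstacle is entirely bookkeeping rather than conceptual: one must be scrupulous to regroup products only through the genuinely $\mathbb{R}$-linear operations that appear (left multiplication by $\phi$, right multiplication by $f$, and extraction of real scalar factors), absorbing every other regrouping into a well-controlled associator. It is worth noting that, unlike in Lemmas \ref{E-lemma} and \ref{Cauchy-formula-lemma}, the alternating property of the associator is not invoked here at all — only its trilinearity and its vanishing on real arguments are used — so the result holds verbatim in any real alternative $\ast$-algebra.
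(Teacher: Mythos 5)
Your proof is correct and takes essentially the same route as the paper's: both reduce the identity to the classical divergence theorem applied componentwise, use bilinearity of the product so that $\sum_s\phi(v_s\partial_s f)=\phi(\overline{\partial}_{\mathcal{B}}f)$ and $\sum_s\bigl((\partial_s\phi)v_s\bigr)f=(\phi\overline{\partial}_{\mathcal{B}})f$, and absorb the failure of associativity into a single associator regrouping using only trilinearity and Proposition \ref{real}. The only cosmetic difference is that the paper reassembles the scalar identities for $\phi_sf_t$ (multiplying by $v_k$, then $v_t$ on the right, then $v_s$ on the left) so as to land directly on the correction term $\sum_{s}[v_s,\overline{\partial}_{\mathcal{B}}\phi_s,f]$, whereas you first obtain $\sum_s[\partial_s\phi,v_s,f]$ and convert it to that form by the relabeling $s\leftrightarrow t$ --- a valid extra step that changes nothing substantive, and your closing observation that alternativity is never actually invoked in this lemma is accurate.
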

\begin{proof}
Let $\phi=\sum_{s=0}^{m}\phi_sv_s \in C^1( \overline{\Omega},  M)$ and $f=\sum_{t=0}^{d}f_tv_t \in C^1(  \overline{\Omega},  \mathbb{A})$.
By the divergence theorem, we have for all real-valued functions $\phi_{s}, f_t\in C^1( \overline{\Omega})$
$$\int_{\partial \Omega}   \phi_{s}f_t   n_{k} dS
=\int_{\Omega}(   (\partial_{_k}\phi_{s} )f_t + \phi_{s} (\partial_{k}f_t) ) dV,\quad k=0,1,\ldots,m. $$
By multiplying by $v_k, k=0,1,\ldots,m,$  on both sides of the above formula, respectively, and  taking summation over $k$, we obtain
$$\int_{\partial \Omega}   \phi_{s}f_t  n dS =
\int_{ \Omega} ( (\overline{\partial}_{\mathcal{B}}  \phi_{s} )f_t+ \phi_{s} (\overline{\partial}_{\mathcal{B}}  f_t) )dV.$$
Then, by multiplying by $v_t, t=0,1,\ldots,d,$  from the right side,  we get
$$\int_{\partial \Omega}   \phi_{s}    n fdS =
\int_{\Omega}  ( (\overline{\partial}_{\mathcal{B}}  \phi_{s} )f  + \phi_{s} (\overline{\partial}_{\mathcal{B}}  f ) )dV,$$
which implies, by multiplying by $v_s, s=0,1,\ldots,m,$ from the left side and then taking  summation over $s$,
$$\int_{ \partial \Omega}   \phi    ( n f)dS
=\int_{ \Omega} \Big( \sum_{s=0}^{m} v_s ((\overline{\partial}_{\mathcal{B}} \phi_{s} )f) + \phi (\overline{\partial}_{\mathcal{B}} f)\Big) dV,$$
which gives
$$\int_{\partial \Omega}   \phi    (n f)dS
=\int_{ \Omega}\Big( (\phi \overline{\partial}_{\mathcal{B}}  )f  + \phi (\overline{\partial}_{\mathcal{B}} f) -
\sum_{s=0}^{m} [v_s,  \overline{\partial}_{\mathcal{B}}  \phi_s, f]\Big)dV,$$
 as desired.
 \end{proof}

 Now we can  prove  the Cauchy-Pompeiu integral  formula, which  subsumes \cite[Theorem 7.8]{Gurlebeck}    and
  \cite[Theorem 3.14]{Ghiloni-Stoppato-24-2}.
\begin{theorem}[Cauchy-Pompeiu]\label{CauchyPompeiuslice}
Let    $\Omega$ be a bounded domain in $M$ with smooth boundary $\partial \Omega$. If $f\in C^1( \overline{\Omega}, \mathbb{A}) $, then
 $$ f(x)=\int_{\partial \Omega}  E_{y}(x) (n(y)f(y)) dS(y)-
\int_{\Omega} E_{y}(x)  (\overline{\partial}_{\mathcal{B}} f(y)) dV(y), \quad  x \in \Omega,   $$
where  $n(y)$ is the unit exterior normal to $\partial \Omega$ at $y$,
 $dS$ and  $dV$ stand  for  the   classical   Lebesgue surface  element and volume element  in $\mathbb{R}^{m+1}$, respectively.
\end{theorem}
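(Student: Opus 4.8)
The plan is to adapt the classical Clifford-analytic excision argument (as in \cite[Theorem 7.8]{Gurlebeck}) by isolating the singularity of the Cauchy kernel and invoking the Gauss theorem (Lemma \ref{Cauchy-formula-lemma-2}), with the non-associative corrections absorbed by the preliminary lemmas. Fix $x\in\Omega$ and choose $\epsilon>0$ so small that $\overline{B(x,\epsilon)}\subseteq\Omega$; set $\Omega_\epsilon:=\Omega\setminus\overline{B(x,\epsilon)}$, a bounded domain whose smooth boundary splits as $\partial\Omega\sqcup S_\epsilon$ with $S_\epsilon=\partial B(x,\epsilon)$. Regard the translated Cauchy kernel $\phi(y):=E_y(x)$ as a function of $y$ on $\overline{\Omega_\epsilon}$. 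Since $x^c\in M$ whenever $x\in M$, $\phi$ is $M$-valued and lies in $C^1(\Omega_\epsilon,M)\cap C(\overline{\Omega_\epsilon},M)$, so Lemma \ref{Cauchy-formula-lemma-2} applies to the pair $(\phi,f)$.

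First I would check that, away from its pole $y=x$, $\phi$ inherits from $E$ the two properties that kill two of the three volume terms in the Gauss identity. Because $E$ is two-sided monogenic (Example \ref{Cauchy-kernel-example}), a direct chain-rule computation in $y$ gives $\phi\,\overline{\partial}_{\mathcal{B}}=0$, so the term $(\phi\,\overline{\partial}_{\mathcal{B}})f$ drops out. Moreover the real components of $\phi$ satisfy condition (\ref{i-j}) of Lemma \ref{Cauchy-formula-lemma}, inherited from (\ref{Eij}) through the chain rule (the sign introduced by differentiating in $y$ is absorbed by the trilinearity of the associator), whence $\sum_{s=0}^m[v_s,\overline{\partial}_{\mathcal{B}}\phi_s,f]=0$ by Lemma \ref{Cauchy-formula-lemma} (equivalently Lemma \ref{Cauchy-formula-lemma-E}). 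The Gauss identity therefore collapses to
$$\int_{\partial\Omega_\epsilon}\phi\,(nf)\,dS=\int_{\Omega_\epsilon}\phi\,(\overline{\partial}_{\mathcal{B}}f)\,dV,$$
and splitting the left-hand boundary integral over $\partial\Omega$ and $S_\epsilon$ isolates the contribution of the pole.

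Next I would pass to the limit $\epsilon\to0$. On the volume side, $|\phi(y)|$ behaves like $|x-y|^{-m}$ near the interior point $x$, which is integrable against the locally bounded factor $\overline{\partial}_{\mathcal{B}}f$ (using Proposition \ref{norm} to bound $|\phi(\overline{\partial}_{\mathcal{B}}f)|\le C\,|\phi|\,|\overline{\partial}_{\mathcal{B}}f|$), so $\int_{\Omega_\epsilon}\phi(\overline{\partial}_{\mathcal{B}}f)\,dV\to\int_{\Omega}\phi(\overline{\partial}_{\mathcal{B}}f)\,dV$. On $S_\epsilon$ the outward normal of $\Omega_\epsilon$ points toward $x$, namely $n(y)=(x-y)/\epsilon$; the crucial point is that the triple product $(x-y)^c\big((x-y)f(y)\big)$ reassociates to $\big((x-y)^c(x-y)\big)f(y)=|x-y|^2 f(y)=\epsilon^2 f(y)$ by Proposition \ref{artin} (with $x-y\in M$) together with the norm identity $(x-y)^c(x-y)=|x-y|^2$. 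Hence $\phi(y)(n(y)f(y))$ reduces, up to the kernel's sign, to $\sigma_m^{-1}\epsilon^{-m}f(y)$, and since the surface area of $S_\epsilon$ equals $\sigma_m\epsilon^m$, the spherical average $\sigma_m^{-1}\epsilon^{-m}\int_{S_\epsilon}f\,dS\to f(x)$ by continuity of $f$. Assembling the pieces and tracking the orientation of $n$ on $S_\epsilon$ fixes the signs in front of the boundary and volume integrals, giving the asserted formula.

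I expect the main obstacle to be exactly the interaction of non-associativity with the singular limit. In the associative case both reductions are automatic, whereas here one must verify separately that (i) the genuinely non-associative term $\sum_s[v_s,\overline{\partial}_{\mathcal{B}}\phi_s,f]$ vanishes, which is the content of Lemmas \ref{Cauchy-formula-lemma} and \ref{Cauchy-formula-lemma-E}, and (ii) the unbracketed product $(x-y)^c\big((x-y)f(y)\big)$ at the pole may be reassociated, which is precisely Proposition \ref{artin}. Once these two algebraic facts are secured, what remains is the standard excision analysis.
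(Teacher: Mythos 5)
Your proposal is correct and follows essentially the same route as the paper's own proof: excise $B(x,\epsilon)$, apply the Gauss theorem (Lemma \ref{Cauchy-formula-lemma-2}) to the pair $(\phi,f)$ with $\phi(y)=E_y(x)$, kill the extra volume terms via the right-monogenicity of $E$ and Lemma \ref{Cauchy-formula-lemma-E}, reassociate $(y-x)^{c}\bigl((y-x)f(y)\bigr)$ on the small sphere by Proposition \ref{artin}, and pass to the limit $\epsilon\to 0$. Your added remarks (integrability of $E_y(x)(\overline{\partial}_{\mathcal{B}}f)$ via Proposition \ref{norm}, and the explicit orientation bookkeeping on $S_\epsilon$) are consistent refinements of the paper's argument, not a different method.
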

\begin{proof}
Given $x \in \Omega$,  denote $B(x,\epsilon):=\{y \in M: | y- x|<\epsilon\}$. Let $\phi( y)=E_{ x}( y)=E( x- y)=-E_{ y}( x)$. Then it holds that
$$\phi( y) \overline{\partial}_{y, \mathcal{B}} =0, \quad   y\in M\backslash \{x\}.$$
For $\epsilon$ small enough, we have by  Lemmas \ref{Cauchy-formula-lemma-E} and  \ref{Cauchy-formula-lemma-2} for $f\in C^1( \overline{\Omega}, \mathbb{A}) $
 \begin{eqnarray*}
&& \int_{\partial \Omega } \phi    ( n f)dS - \int_{\partial B( x,\epsilon)}   \phi    ( n f)dS\\
&=& \int_{ \Omega \setminus B( x,\epsilon)}\Big( (\phi \overline{\partial}_{\mathcal{B}}  )f + \phi (\overline{\partial}_{\mathcal{B}}f) -
\sum_{s=0}^{m} [v_s,  \overline{\partial}_{\mathcal{B}} \phi_s, f]\Big)dV
\\
&=&\int_{ \Omega \setminus B(x,\epsilon)} \phi (\overline{\partial}_{\mathcal{B}}f)   dV.
 \end{eqnarray*}
Recalling Proposition \ref{artin}, it follows that
 \begin{eqnarray*}
 \int_{\partial B( x,\epsilon) }   \phi    ( n f)dS
&=&   \frac{1}{\sigma_{m}}\int_{\partial B( x,\epsilon) }  \frac{  (y- x)^{c}}{| y- x|^{m+1}}   (\frac{ y- x}{| y- x|} f( y))dS( y)
\\
&=& \frac{1}{\sigma_{m}}\int_{\partial B( x,\epsilon) } \Big( \frac{(y- x)^{c}}{| y- x|^{m+1}}   \frac{ y- x}{| y- x|}\Big) f( y)dS( y)
\\
&=& \frac{1}{\epsilon^{m}\sigma_{m}}\int_{\partial B( x,\epsilon) }   f( y)dS( y)
\\
&\rightarrow &   f( x), \ \epsilon\rightarrow0.
 \end{eqnarray*}
Combining these two facts above, we infer
 \begin{eqnarray*}
 \int_{\partial \Omega}   \phi    ( n f)dS-f( x)
=\lim_{\epsilon\rightarrow0} \int_{\Omega  \setminus B( x,\epsilon)} \phi (\overline{\partial}_{\mathcal{B}}f)   dV
=\int_{\Omega  } \phi (\overline{\partial}_{\mathcal{B}}f)   dV,
\end{eqnarray*}
i.e.
 $$f( x)=\int_{\partial \Omega}  E_{ y}( x) ( n( y)f( y)) dS( y)-
\int_{\Omega} E_{ y}( x)  (\overline{\partial}_{\mathcal{B}}f( y)) dV( y).$$
 The proof is complete.
\end{proof}

\begin{definition}\label{Teodorescu}
Let    $\Omega$ be a bounded domain in $M$ with  smooth boundary $\partial \Omega$ and    $f\in C^1( \overline{\Omega}, \mathbb{A})$. Define the Teodorescu transform  as
 $$T[f](x)=-\int_{  \Omega}  E_{y}(x) f(y) dV(y), \quad  x \in \Omega,   $$
where  the integral   is   singular and exists in the   sense of  Cauchy's principal value,
i.e.
$$T[f](x)=- \lim_{\epsilon\rightarrow 0}\int_{  \Omega  \setminus B( x,\epsilon) }  E_{y}(x) f(y) dV(y).$$
Define  the Cauchy transform  as
 $$ C[f](x)=\int_{\partial \Omega}  E_{y}(x) (n(y)f(y)) dS(y),\quad  x\notin \partial \Omega.$$
\end{definition}

The $\Pi$-operator  plays a   major  role in complex and hypercomplex  analysis,  especially  in the theory of
generalized analytic functions in the sense of Vekua.  In  real alternative $\ast$-algebras,
we can also define $\Pi$-operator by
$$\Pi[f] (x)= \partial_{\mathcal{B}}  T[ f] (x),$$
 which   subsumes the Clifford analysis case \cite[Definition 1]{Gurlebeck-96} and the octonionic case \cite[Definition 4.1]{Krau-24}.
 The relevant properties of the $\Pi$-operator shall be discussed in details in a forthcoming  paper.

The Cauchy-Pompeiu integral  formula in Theorem \ref{CauchyPompeiuslice}   reads as
$$ C[f](x)+T[\overline{\partial}_{\mathcal{B}} f] (x) =f(x),\quad x\in \Omega.$$

In fact, the  Teodorescu transform can be viewed as  a right inverse of the   Cauchy-Riemann operator $\overline{\partial}_{\mathcal{B}}$,
 which  subsumes \cite[Theorem 8.2]{Gurlebeck}, \cite[Theorem 4.2]{Wang} and  \cite[Theorem 3.7]{Krau-24}.
\begin{theorem}\label{Cauchy-Pompeiu-inverse}
Let    $\Omega$ be a bounded domain in $M$ with  smooth boundary $\partial \Omega$ and    $f\in C^1( \overline{\Omega}, \mathbb{A})$. Then $T[ f] (x)$ is also differentiable in $\Omega$ with
 \begin{equation}\label{T-s}
\partial_s T[ f] (x)=-\int_{  \Omega} \partial_s E_{y}(x) f(y) dV(y)+v_s^{c} \frac{f(x)}{m+1}, \ s=0,1,\ldots,m.\end{equation}
In particular, we have
$$\overline{\partial}_{\mathcal{B}} T[ f] (x) =f(x),\quad x\in \Omega.$$
\end{theorem}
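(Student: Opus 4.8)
The plan is to prove the differentiation formula \eqref{T-s} first, since the final identity $\overline{\partial}_{\mathcal{B}} T[f](x) = f(x)$ follows from it by multiplying each relation by $v_s$ on the left and summing over $s = 0, 1, \ldots, m$. Starting from the definition of the Teodorescu transform as a Cauchy principal value, I would write
$$\partial_s T[f](x) = -\partial_s \lim_{\epsilon \to 0} \int_{\Omega \setminus B(x,\epsilon)} E_y(x) f(y)\, dV(y),$$
and the central task is to differentiate this singular integral rigorously. Because $E_y(x) = E(x - y)$ has a singularity of order $|x-y|^{-m}$ at $y = x$, one cannot simply move $\partial_s$ inside the integral; instead I would excise the ball $B(x,\epsilon)$, differentiate the resulting regular integral where the integrand is smooth, and then carefully track the boundary term that arises from the $x$-dependence of the domain of integration.

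The key computation is that differentiating $\int_{\Omega \setminus B(x,\epsilon)} E_y(x) f(y)\, dV(y)$ with respect to $x_s$ produces two contributions: the volume integral $\int_{\Omega \setminus B(x,\epsilon)} \partial_s E_y(x)\, f(y)\, dV(y)$, and a surface term coming from the moving inner boundary $\partial B(x,\epsilon)$. For the surface term I would compute the flux of $E_y(x) f(y)$ through $\partial B(x,\epsilon)$ in the direction $-v_s$; using the explicit form $E(x-y) = \frac{1}{\sigma_m}\frac{(x-y)^c}{|x-y|^{m+1}}$ and the fact that on $\partial B(x,\epsilon)$ the normal is $(y-x)/\epsilon$, the leading behavior as $\epsilon \to 0$ should isolate a constant multiple of $v_s^c f(x)$. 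The normalization constant $\sigma_m$ together with the surface area $\epsilon^m \sigma_m$ of the sphere of radius $\epsilon$ is exactly what yields the coefficient $\frac{1}{m+1}$; the appearance of $v_s^c$ rather than $v_s$ reflects that it is $(x-y)^c$, not $(x-y)$, that sits in the kernel. Here I must use Proposition \ref{artin} to justify regrouping the product $\frac{(x-y)^c}{|x-y|^{m+1}}\cdot\frac{y-x}{|y-x|}$, exactly as in the proof of Theorem \ref{CauchyPompeiuslice}, since associativity fails in general.

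Once \eqref{T-s} is established, the final step is clean: summing $v_s\, \partial_s T[f](x)$ over $s=0,\ldots,m$ gives
$$\overline{\partial}_{\mathcal{B}} T[f](x) = -\int_\Omega \overline{\partial}_{\mathcal{B}} E_y(x)\, f(y)\, dV(y) + \Big(\sum_{s=0}^m v_s v_s^c\Big)\frac{f(x)}{m+1}.$$
The first integral vanishes because $E_y(x) = E(x-y)$ is monogenic away from $y=x$ (by Example \ref{Cauchy-kernel-example} and Proposition \ref{preserving-monogenic}), and since each $v_s \in \mathbb{S}_{\mathbb A}$ satisfies $n(v_s) = v_s v_s^c = 1$, the sum $\sum_{s=0}^m v_s v_s^c = m+1$, so the second term collapses to exactly $f(x)$. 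The main obstacle I anticipate is the rigorous justification of differentiating under the principal-value integral sign and controlling the limit of the inner boundary term; this requires splitting off a smooth remainder, verifying uniform integrability of $\partial_s E_y(x)$ near the singularity, and carrying out the $\epsilon \to 0$ limit of the spherical flux integral with the non-associative grouping handled via Proposition \ref{artin}. The algebraic bookkeeping of left/right multiplications and conjugations, though routine in the associative case, demands the careful associator arguments developed in the earlier lemmas.
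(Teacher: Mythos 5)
Your overall architecture coincides with the paper's: first establish the differentiation formula \eqref{T-s} for the principal-value integral, then obtain $\overline{\partial}_{\mathcal{B}}T[f]=f$ by left-multiplying by $v_s$ and summing over $s$. For \eqref{T-s} itself the paper does not redo the hard analysis: it quotes the scalar case from the proof of \cite[Theorem 8.2]{Gurlebeck} for real-valued $u\in C^1(\Omega)\cap C(\overline{\Omega})$ and extends to $\mathbb{A}$-valued $f$ componentwise by additivity, precisely because no associativity issue arises at that stage; your plan to re-derive the moving-boundary computation directly (with the spherical average of $\omega_s\omega_t$ producing the coefficient $\tfrac{v_s^c}{m+1}$) is exactly the content of that citation and is sound. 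Note, however, that your appeal to Proposition~\ref{artin} in the flux term is misplaced: the extra factor appearing there is the real component $n_s$ of the normal, so Proposition~\ref{real} already handles the grouping, and no Artin-type argument is needed at that point.

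The genuine gap is in your final display. Writing
$$\overline{\partial}_{\mathcal{B}} T[f](x) = -\int_\Omega \big(\overline{\partial}_{\mathcal{B}} E_y(x)\big) f(y)\,dV(y) + \Big(\sum_{s=0}^m v_s v_s^c\Big)\frac{f(x)}{m+1}$$
presupposes two regroupings that fail in a general non-associative algebra: $\sum_s v_s\big(\partial_s E_y(x) f(y)\big)=\big(\sum_s v_s\,\partial_s E_y(x)\big)f(y)$ and $\sum_s v_s\big(v_s^c f(x)\big)=\big(\sum_s v_sv_s^c\big)f(x)$. The second is quickly repaired by Proposition~\ref{artin}, which the paper invokes exactly here. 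The first is the crux of the theorem in the alternative setting and does \emph{not} follow from Artin's theorem, since $v_s$, $\partial_s E_y(x)$ and $f(y)$ are three unrelated elements; the paper obtains it from Lemma~\ref{E-lemma}, applied to $\phi=E_y(\cdot)$ (whose components satisfy the symmetry \eqref{i-j-E-1} by \eqref{Eij}) with the constant $a=f(y)$, yielding $\sum_{s=0}^m[v_s,\partial_s E_y(x),f(y)]=0$ by pairwise cancellation of the associators. Without this kernel-specific identity, your justification ``the first integral vanishes because $E_y(x)$ is monogenic away from $y=x$'' is a non sequitur: monogenicity gives $\sum_s v_s\,\partial_s E_y(x)=0$, but by itself says nothing about $\sum_s v_s\big(\partial_s E_y(x) f(y)\big)$. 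Your closing sentence gestures at ``the careful associator arguments developed in the earlier lemmas,'' but the specific tool required is Lemma~\ref{E-lemma}, which you never invoke, while the one place you did invoke an associator argument is the one place it was unnecessary.
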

\begin{proof}
From the proof of \cite[Theorem 8.2]{Gurlebeck}, it holds that for any real-valued function $u\in C^1(  \overline{\Omega})$
$$\partial_s T[u] (x)=-\int_{  \Omega} \partial_s E_{y}(x) u(y) dV(y)+ \frac{v_s^{c}}{m+1}u(x), \ s=0,1,\ldots,m.$$
Then the general case (\ref{T-s}) of $\mathbb{A}$-valued functions can be done by addition due to that this is no issue of the associativity.

Now we use (\ref{T-s}) to show the Teodorescu transform is a right inverse of $\overline{\partial}_{\mathcal{B}}$.
To  this  end, we first note that by Proposition \ref{artin}
$$v_s(v_s^{c} f(x))=(v_sv_s^{c}) f(x)=f(x), \ s=0,1,\ldots,m,$$
and   that $E_{y}(x)$ satisfies (\ref{i-j-E-1}), and hence Lemma \ref{E-lemma} gives that
$$\sum_{s=0}^{m} [v_s,  \partial_s E_{y}(x), f(y)]=0.$$
Hence,
 \begin{eqnarray*}
 \overline{\partial}_{\mathcal{B}} T[ f] (x)
&=&-\int_{  \Omega} \sum_{s=0}^{m} v_s (\partial_s E_{y}(x) f(y)) dV(y)+\frac{1}{m+1}\sum_{s=0}^{m} v_s(v_s^{c} f(x))
\\
   &=&-\int_{  \Omega} \sum_{s=0}^{m} (v_s \partial_s E_{y}(x)) f(y)dV(y)+\frac{1}{m+1}\sum_{s=0}^{m}   f(x)
   \\
   &=&-\int_{  \Omega} \sum_{s=0}^{m} (v_s \partial_s E_{y}(x)) f(y)dV(y)+ f(x)
   \\
   &=&f(x),
\end{eqnarray*}
 which completes the proof.

\end{proof}

By Theorem  \ref{CauchyPompeiuslice}, we obtain
\begin{theorem}[Cauchy]\label{Cauchy-slice}
Let    $\Omega $ be a bounded domain in $M$ with smooth boundary $\partial \Omega $.  If $ f\in \mathcal{M}( \overline{\Omega}, \mathbb{A})$, then
 $$f(x)=\int_{\partial \Omega}  E_{y}(x) (n(y)f(y)) dS(y), \quad  x \in \Omega,   $$
where  $n(y)$ is the unit exterior normal to $\partial \Omega$ at $y$,
 $dS$ and  $dV$ stand  for  the   classical   Lebesgue surface  element and volume element  in $\mathbb{R}^{m+1}$, respectively.
\end{theorem}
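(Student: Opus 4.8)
The plan is to derive the Cauchy integral formula directly from the Cauchy--Pompeiu integral formula established in Theorem~\ref{CauchyPompeiuslice}. The key observation is that for a left monogenic function $f$, the defining relation $\overline{\partial}_{\mathcal{B}} f \equiv 0$ forces the volume integral to vanish identically, leaving only the boundary integral.

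First I would recall that $f \in \mathcal{M}(\overline{\Omega}, \mathbb{A})$ means $f$ is left monogenic on a neighborhood of $\overline{\Omega}$, so in particular $f \in C^1(\Omega, \mathbb{A}) \cap C(\overline{\Omega}, \mathbb{A})$ and the hypotheses of Theorem~\ref{CauchyPompeiuslice} are met. Applying that theorem gives, for each $x \in \Omega$,
$$
f(x) = \int_{\partial \Omega} E_y(x)\,(n(y) f(y))\, dS(y) - \int_{\Omega} E_y(x)\,(\overline{\partial}_{\mathcal{B}} f(y))\, dV(y).
$$
Since $f$ is left monogenic, $\overline{\partial}_{\mathcal{B}} f(y) = 0$ for every $y \in \Omega$, so the integrand of the volume integral is $E_y(x)\cdot 0 = 0$ pointwise, and hence the entire volume term vanishes. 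This leaves exactly the asserted boundary formula.

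The argument is essentially a one-line specialization, so there is no serious analytic obstacle here; the substantive work was already done in proving the Cauchy--Pompeiu formula and in establishing (via Remark~\ref{C1-C00} and Proposition~\ref{GSM-Harm}) that monogenic functions possess the requisite regularity. The only points warranting care are that the non-associativity of $\mathbb{A}$ does not interfere: the volume integrand is handled by the grouping $E_y(x)(\overline{\partial}_{\mathcal{B}} f(y))$ already fixed in Theorem~\ref{CauchyPompeiuslice}, and since $\overline{\partial}_{\mathcal{B}} f \equiv 0$ we never need to rearrange any triple product. I would therefore simply invoke Theorem~\ref{CauchyPompeiuslice}, note that the monogenicity of $f$ annihilates the volume term, and conclude.
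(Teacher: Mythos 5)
Your proposal is correct and matches the paper exactly: the paper derives Theorem~\ref{Cauchy-slice} as an immediate consequence of the Cauchy--Pompeiu formula (Theorem~\ref{CauchyPompeiuslice}), with the volume integral annihilated by $\overline{\partial}_{\mathcal{B}} f \equiv 0$. Your added remarks on regularity and on why non-associativity causes no trouble are accurate but not needed beyond what you state.
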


For   $\mathrm{k} \in \mathbb{N}^{m+1}$, define
$$\mathcal{Q}_{ \mathrm{k}}  (x):=(-1)^{|\mathrm{k}|}  \partial_{\mathrm{k}} E(x).$$
\begin{corollary} \label{Cauchy-slice-high}
Let    $\Omega \subseteq M$ be a bounded domain  containing $0$ with smooth boundary $\partial \Omega $.  If $ f\in \mathcal{M}(  \overline{\Omega}, \mathbb{A})$, then
for all  $\mathrm{k} \in \mathbb{N}^{m+1}$
  $$ \partial_{\mathrm{k}} f(0)=\int_{\partial \Omega} \mathcal{Q}_{ \mathrm{k}}(y) (n(y)f(y)) dS(y),   $$
where  $n(y)$ is the unit exterior normal to $\partial \Omega$ at $y$,
 $dS$ and  $dV$ stand  for  the   classical   Lebesgue surface  element and volume element  in $\mathbb{R}^{m+1}$, respectively.
\end{corollary}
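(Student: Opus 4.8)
The plan is to differentiate the Cauchy integral representation of Theorem~\ref{Cauchy-slice} in the free variable $x$ and then evaluate at $x=0$. Reading the hypothesis so that Theorem~\ref{Cauchy-slice} applies (that is, $f$ monogenic up to $\partial\Omega$), we have
$$f(x)=\int_{\partial\Omega}E_y(x)\,\big(n(y)f(y)\big)\,dS(y),\quad x\in\Omega,$$
where $E_y(x)=E(y-x)$, as in the proof of Theorem~\ref{CauchyPompeiuslice}. Since $f$ is real analytic (Proposition~\ref{GSM-Harm}), hence $C^\infty$, every derivative $\partial_{\mathrm{k}}f$ exists near $0$, and differentiating this identity in $x$ and setting $x=0$ will produce the asserted formula.

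First I would differentiate under the integral sign. For $x$ in a compact neighborhood $K$ of $0$ with $K\subset\Omega$ and $y\in\partial\Omega$, one has $|y-x|\geq\delta>0$, so $E_y(x)=E(y-x)$ and all of its $x$-derivatives are smooth and uniformly bounded on $K\times\partial\Omega$; as the surface integral is defined componentwise (Definition~\ref{integral-surface}), the classical theorem on differentiation under the integral applies to each real component. The key algebraic observation is that $n(y)f(y)$ does not depend on $x$: for fixed $c\in\mathbb{A}$ the map $a\mapsto ac$ is $\mathbb{R}$-linear, so
$$\partial_{x_s}\big(E_y(x)\,(n(y)f(y))\big)=\big(\partial_{x_s}E_y(x)\big)\,(n(y)f(y)),$$
which uses only bilinearity of the product, so no associativity issue arises here. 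Iterating gives $\partial_{\mathrm{k}}f(x)=\int_{\partial\Omega}\big(\partial_{\mathrm{k},x}E_y(x)\big)\,(n(y)f(y))\,dS(y)$.

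Next comes the chain-rule bookkeeping. Writing $E_y(x)=E(y-x)$, each application of $\partial_{x_s}$ contributes a factor $-1$ from the inner derivative of $y-x$, so $\partial_{\mathrm{k},x}E(y-x)=(-1)^{|\mathrm{k}|}(\partial_{\mathrm{k}}E)(y-x)$. Setting $x=0$ yields exactly $(-1)^{|\mathrm{k}|}(\partial_{\mathrm{k}}E)(y)=\mathcal{Q}_{\mathrm{k}}(y)$ by the definition of $\mathcal{Q}_{\mathrm{k}}$, whence
$$\partial_{\mathrm{k}}f(0)=\int_{\partial\Omega}\mathcal{Q}_{\mathrm{k}}(y)\,\big(n(y)f(y)\big)\,dS(y),$$
as desired.

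The hard part will be the two justificatory points rather than any algebra. The sign bookkeeping is delicate: the minus sign from the chain rule must combine correctly with the $(-1)^{|\mathrm{k}|}$ built into $\mathcal{Q}_{\mathrm{k}}$, and this hinges on the identification $E_y(x)=E(y-x)$, so pinning down that convention is essential. Moreover, if one insists on $f$ being monogenic only on the open set, the Cauchy representation is first obtained on a smooth subdomain $\Omega'$ with $0\in\Omega'$ and $\overline{\Omega'}\subset\Omega$, and the passage from $\partial\Omega'$ back to $\partial\Omega$ must be justified by a surface-independence argument. This follows from the Gauss Lemma~\ref{Cauchy-formula-lemma-2} applied on the region between the two boundaries, once one notes that $\mathcal{Q}_{\mathrm{k}}$ is right monogenic and that its associator term vanishes because the components of $\partial_{\mathrm{k}}E$ inherit the symmetry~(\ref{Eij}) under differentiation.
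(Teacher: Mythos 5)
Your proposal is correct and is essentially the argument the paper intends (the corollary is stated without explicit proof precisely because it follows from Theorem \ref{Cauchy-slice} by differentiating under the integral sign in $x$ and evaluating at $x=0$): the convention $E_y(x)=E(y-x)$ makes the chain-rule factor $(-1)^{|\mathrm{k}|}$ match the definition of $\mathcal{Q}_{\mathrm{k}}$, and your observation that only $\mathbb{R}$-bilinearity of the product (not associativity) is needed to pull $\partial_{x_s}$ past the constant factor $n(y)f(y)$ is exactly the right point. Your closing remarks correctly flag the paper's slight looseness in writing $f\in\mathcal{M}(\Omega,\mathbb{A})$ rather than $\mathcal{M}(\overline{\Omega},\mathbb{A})$, and your proposed repair via an interior subdomain plus the Gauss lemma (with the associator term vanishing since the components of $\partial_{\mathrm{k}}E$ inherit the symmetry (\ref{Eij})) is sound.
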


In view of Lemma \ref{Cauchy-formula-lemma-2-0},   an inverse of the Cauchy  integral formula holds.
\begin{theorem}\label{Cauchy-slice-inverse}
Let     $\Omega$ be a bounded domain in $M$ with smooth boundary $\partial \Omega$ and
 $f\in C( \partial \Omega, \mathbb{A}) $. Then
  $C[f] \in \mathcal{M}(M \setminus \partial \Omega,  \mathbb{A})$.
\end{theorem}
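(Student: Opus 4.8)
\section*{Proof proposal for Theorem \ref{Cauchy-slice-inverse}}

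The plan is to differentiate $C[f]$ under the integral sign and to show that, for each fixed boundary point $y$, the $x$-integrand is annihilated by $\overline{\partial}_{\mathcal{B}}$; the proof of left monogenicity on $M\setminus\partial\Omega$ then follows at once (monogenicity being a local property, it suffices to argue on each connected component). First I would fix a compact set $K\subseteq M\setminus\partial\Omega$. Since $\partial\Omega$ is compact we have $\delta:=\operatorname{dist}(K,\partial\Omega)>0$, so $|x-y|\geq\delta$ for all $x\in K$ and $y\in\partial\Omega$; hence $E_{y}(x)$ and all its $x$-derivatives are continuous and uniformly bounded on $K\times\partial\Omega$, while $n$ and $f$ are bounded on the compact set $\partial\Omega$. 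The standard theorem on differentiation under the integral sign therefore applies and yields, for every $x\in M\setminus\partial\Omega$,
$$\overline{\partial}_{\mathcal{B}}\,C[f](x)=\int_{\partial\Omega}\overline{\partial}_{x,\mathcal{B}}\big(E_{y}(x)\,(n(y)f(y))\big)\,dS(y).$$
In particular $C[f]\in C^{\infty}(M\setminus\partial\Omega,\mathbb{A})$, so the $C^{1}$-requirement in the definition of $\mathcal{M}$ is automatic.

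The heart of the matter is that the integrand vanishes identically. Fix $y\in\partial\Omega$ and set $a:=n(y)f(y)\in\mathbb{A}$, a constant with respect to $x$. The genuine obstacle here is non-associativity: one is \emph{not} entitled to write $\overline{\partial}_{x,\mathcal{B}}(E_{y}(x)a)=(\overline{\partial}_{x,\mathcal{B}}E_{y}(x))a$ for free, and this is exactly the point quarantined by the preparatory lemmas. Recalling from the proof of Theorem \ref{CauchyPompeiuslice} that $E_{y}(x)=-E(x-y)$, I would put $G(x):=E(x)a$, so that $x\mapsto E_{y}(x)a$ equals $-G(x-y)$. By Lemma \ref{Cauchy-formula-lemma-2-0} the function $G$ is left monogenic on $M\setminus\{0\}$, and by the translation invariance recorded in Proposition \ref{preserving-monogenic} so is $x\mapsto G(x-y)$ on $M\setminus\{y\}$. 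Using that $\overline{\partial}_{x,\mathcal{B}}$ is $\mathbb{R}$-linear, this gives
$$\overline{\partial}_{x,\mathcal{B}}\big(E_{y}(x)\,(n(y)f(y))\big)=-\,\overline{\partial}_{x,\mathcal{B}}\big(G(x-y)\big)=0,\qquad x\neq y.$$
Equivalently, one may apply Lemma \ref{E-lemma} directly to $\phi(x):=E_{y}(x)$: this $\phi$ is $M$-valued and $C^{1}$ on $M\setminus\{y\}$, it is left monogenic since $E$ is (Example \ref{Cauchy-kernel-example}), and the symmetry relations (\ref{Eij}) for the components of $E$ transfer by the chain rule to the relations (\ref{i-j-E-1}) for the components of $E_{y}(\cdot)$ in the $x$-variable, whence $\overline{\partial}_{x,\mathcal{B}}(E_{y}(x)a)=(\overline{\partial}_{x,\mathcal{B}}E_{y}(x))a=0$.

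Substituting this vanishing integrand back into the formula of the first step gives $\overline{\partial}_{\mathcal{B}}\,C[f](x)=0$ for every $x\in M\setminus\partial\Omega$. Combined with the smoothness established above, $C[f]$ is left monogenic on each connected component of $M\setminus\partial\Omega$, that is, $C[f]\in\mathcal{M}(M\setminus\partial\Omega,\mathbb{A})$, as claimed. I expect the only delicate step to be the handling of right multiplication by $n(y)f(y)$ in the second paragraph, where associativity fails; once this is routed through Lemma \ref{Cauchy-formula-lemma-2-0} (equivalently Lemma \ref{E-lemma}), the remaining ingredient, differentiation under the integral sign, is routine because $x$ is kept at a fixed positive distance from the compact set $\partial\Omega$.
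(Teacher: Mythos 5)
Your proposal is correct and follows exactly the route the paper intends: the paper proves this theorem simply by invoking Lemma \ref{Cauchy-formula-lemma-2-0} (that $\overline{\partial}_{\mathcal{B}}(E(x)a)=0$ for $x\neq 0$), with differentiation under the integral sign left implicit, and your write-up supplies precisely those details, including the correct quarantining of the non-associativity issue through Lemma \ref{Cauchy-formula-lemma-2-0} (equivalently Lemma \ref{E-lemma}) and the translation step via Proposition \ref{preserving-monogenic}. Nothing in your argument deviates from or adds risk to the paper's approach; it is a faithful, fully detailed version of it.
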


 As  in the classical case of monogenic functions,  Theorem \ref{Cauchy-slice} allows  to obtain several consequences, such as mean value theorem and maximum modulus principle. The proof can be given as in \cite{Xu-Sabadini-25-o}, we omit it here.
\begin{theorem}[Mean value theorem]\label{mean}
Let      $\Omega$ be a bounded  domain in $M$.  If $f\in \mathcal{M}(\Omega, \mathbb{A})$, then
 $$f(x)=\frac{1}{\sigma_{m}  \epsilon^{m}} \int_{\partial B(x,\epsilon) }  f(y) dS(y), \quad x \in B( x,\epsilon) ,  $$
where $B( x,\epsilon)  \subset \Omega$.
\end{theorem}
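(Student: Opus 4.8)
The plan is to derive the mean value property directly from the Cauchy integral formula (Theorem \ref{Cauchy-slice}), by specializing the integration domain to a sphere $\partial B(x,\epsilon)$ and exploiting the explicit form of the Cauchy kernel together with the weak associativity recorded in Proposition \ref{artin}. First I would observe that, since $f\in\mathcal{M}(\Omega,\mathbb{A})$ and the closure of $B(x,\epsilon)$ lies in $\Omega$, the restriction of $f$ to $\overline{B(x,\epsilon)}$ belongs to $\mathcal{M}(\overline{B(x,\epsilon)},\mathbb{A})$; hence Theorem \ref{Cauchy-slice} applies with $\Omega$ replaced by the ball and gives
$$f(x)=\int_{\partial B(x,\epsilon)} E_{y}(x)\,(n(y)f(y))\,dS(y).$$
On the sphere $\partial B(x,\epsilon)$ the outward unit normal is $n(y)=(y-x)/\epsilon$ and $|y-x|=\epsilon$, so the kernel reduces to $E_{y}(x)=\frac{1}{\sigma_{m}}\frac{(y-x)^{c}}{\epsilon^{m+1}}$.

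The decisive step, and the only place where non-associativity intervenes, is the regrouping of the integrand. Since $y-x\in M$, Proposition \ref{artin} yields $[(y-x)^{c},y-x,f(y)]=0$, whence
$$(y-x)^{c}\big((y-x)f(y)\big)=\big((y-x)^{c}(y-x)\big)f(y)=|y-x|^{2}f(y)=\epsilon^{2}f(y),$$
using that $(y-x)^{c}(y-x)=|y-x|^{2}$ for $y-x\in M$ by the norm identity. Combining this with the expression for the normal and the kernel collapses the integrand on $\partial B(x,\epsilon)$ to
$$E_{y}(x)\,(n(y)f(y))=\frac{1}{\sigma_{m}\epsilon^{m+1}}\cdot\frac{1}{\epsilon}\cdot\epsilon^{2}\,f(y)=\frac{1}{\sigma_{m}\epsilon^{m}}f(y),$$
and substituting back into the Cauchy formula produces exactly the claimed identity.

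The main obstacle is precisely this regrouping: in a non-associative algebra one cannot a priori move the real scalar $(y-x)^{c}(y-x)=|y-x|^{2}$ past the product $(y-x)f(y)$, which is what makes the spherical integrand constant in its algebraic part. This difficulty is resolved by invoking the alternating property of the associator on $M$ (Proposition \ref{artin}), exactly as in the boundary-term computation in the proof of Theorem \ref{CauchyPompeiuslice}; all remaining manipulations are routine and identical to the classical Clifford case, so no further complication arises.
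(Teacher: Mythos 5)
Your proposal is correct and follows essentially the route the paper intends: it specializes Theorem \ref{Cauchy-slice} to the ball and collapses the spherical integrand via $[(y-x)^{c},y-x,f(y)]=0$ (Proposition \ref{artin}), which is exactly the boundary-term computation already carried out in the proof of Theorem \ref{CauchyPompeiuslice} and the argument the paper delegates to \cite{Xu-Sabadini-25-o}. The only pedantic caveat, shared with the paper's statement, is that applying the Cauchy formula on $\partial B(x,\epsilon)$ tacitly uses $\overline{B(x,\epsilon)}\subset\Omega$ (or a limit over slightly smaller radii), which you may wish to state explicitly.
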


\begin{theorem}{\bf (Maximum modulus principle)}
Let      $\Omega$ be a bounded domain in $M$ and  $f\in \mathcal{M}(\Omega, \mathbb{A})$.   If $|f|$ has a relative maximum at some point in $\Omega$, then $f$ is constant.
\end{theorem}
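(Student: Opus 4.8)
The plan is to combine the mean value theorem (Theorem \ref{mean}) with the harmonicity of monogenic functions (Proposition \ref{GSM-Harm}) and then globalize via the identity theorem. Suppose $|f|$ attains a relative maximum $M:=|f(x_0)|$ at some $x_0\in\Omega$, so that $|f(y)|\leq M$ for all $y$ in some ball $B(x_0,r)\subseteq\Omega$. For every $\epsilon\in(0,r]$, Theorem \ref{mean} gives $f(x_0)=\frac{1}{\sigma_m\epsilon^m}\int_{\partial B(x_0,\epsilon)}f(y)\,dS(y)$, and taking norms together with property (iv) of Proposition \ref{absolute-boundary} yields
$$M=|f(x_0)|\leq \frac{1}{\sigma_m\epsilon^m}\int_{\partial B(x_0,\epsilon)}|f(y)|\,dS(y)\leq \frac{1}{\sigma_m\epsilon^m}\int_{\partial B(x_0,\epsilon)}M\,dS(y)=M,$$
since $\partial B(x_0,\epsilon)$ has surface area $\sigma_m\epsilon^m$. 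Hence equality holds throughout, so $\int_{\partial B(x_0,\epsilon)}(M-|f(y)|)\,dS(y)=0$ with a nonnegative continuous integrand; this forces $|f(y)|=M$ on each sphere $\partial B(x_0,\epsilon)$, and letting $\epsilon$ range over $(0,r]$ I obtain $|f|\equiv M$ on the whole ball $B(x_0,r)$.

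The next step is to upgrade \emph{$|f|$ constant} to \emph{$f$ constant} on $B(x_0,r)$. Writing $f=\sum_{t=0}^d v_t f_t$ with real components $f_t$, the Euclidean norm associated to $\mathcal{B}'$ gives $|f|^2=\sum_{t=0}^d f_t^2$. Since $f$ is monogenic it is harmonic (Proposition \ref{GSM-Harm}), and because $\Delta_{\mathcal{B}}=\sum_{s=0}^m\partial_s^2$ acts componentwise on the $v_t$-components, each real component satisfies $\Delta_{\mathcal{B}}f_t=0$. Applying $\Delta_{\mathcal{B}}$ to the constant function $|f|^2$ and using $\Delta_{\mathcal{B}}(f_t^2)=2f_t\Delta_{\mathcal{B}}f_t+2|\nabla f_t|^2=2|\nabla f_t|^2$, I get
$$0=\Delta_{\mathcal{B}}|f|^2=2\sum_{t=0}^d|\nabla f_t|^2\quad\text{on }B(x_0,r),$$
so $\nabla f_t\equiv 0$ for every $t$, and $f$ is constant on the connected ball $B(x_0,r)$, say $f\equiv c:=f(x_0)$ there.

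Finally I would globalize. The constant function $c$ is monogenic, so $g:=f-c$ is monogenic on $\Omega$ and vanishes on $B(x_0,r)$; Proposition \ref{Identity-theorem-monogenic-harmonic} then yields $g\equiv0$, i.e. $f\equiv c$ throughout $\Omega$. Note that the argument is essentially scalar — it manipulates $|f|^2$ as a sum of real squares of harmonic components — so the non-commutativity and non-associativity of $\mathbb{A}$ never enter directly. The only mildly delicate point is the passage from $|f|$ constant to $f$ constant, which the harmonicity of the components handles cleanly; I therefore expect no serious obstacle, the proof being a faithful transcription of the classical Clifford-analytic one.
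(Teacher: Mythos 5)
Your proof is correct, and it is essentially the argument the paper has in mind: the paper omits the proof, deferring to \cite{Xu-Sabadini-25-o}, where the same route is taken — the mean value theorem plus the norm estimate forces $|f|$ to be constant near the maximum point, harmonicity of the real components (via $\Delta_{\mathcal{B}}|f|^2=2\sum_t|\nabla f_t|^2$) upgrades this to $f$ locally constant, and the identity principle (Proposition~\ref{Identity-theorem-monogenic-harmonic}) globalizes. Your observation that the argument is purely componentwise, so non-commutativity and non-associativity never intervene, is exactly why the paper could safely omit it.
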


\section{Fueter polynomials and Taylor series expansion}
In this section, we  introduce some monogenic Fueter polynomials which are   the building blocks of the Taylor expansion for monogenic functions
on hypercomplex subspaces of real alternative $\ast$-algebras.

First, we introduce  the Fueter variables   as in the classical Clifford case.
\begin{definition}\label{Fueter-variables}
 The so-called  Fueter variables are defined as
 $$ z_{\ell}( x)=   x_{\ell}-x_0 v_{\ell},  \quad \ell=1,2,\ldots,m.$$
  \end{definition}
By using Proposition \ref{artin}, direct calculation shows that
$$z_{\ell} ( x)\in \mathcal{ M}^{L}(M,M)\cap \mathcal{ M}^{R}(M,M),  \quad \ell=1,2,\ldots,m. $$

A natural idea is to use  Fueter variables in Definition \ref{Fueter-variables} to construct   monogenic Fueter polynomials as in the associative  case.  However, in view of the lack of associativity for general  alternative algebras $\mathbb{A}$, we need one useful result  concerning the multiplication of  ordered $n$ elements.

Given an  alignment $(a_1, a_2,\ldots, a_n)\in \mathbb{A}^{n}$, it is known that the multiplication of  ordered $n$ $(\geq2)$ elements
$a_1 a_2\cdots a_n$  has $ \frac{(2n-2)!}{n!(n-1)!}$  different associative orders. Denote by
$(a_{1}  a_{2} \cdots  a_{n})_{\otimes_{n}}$   the product of the ordered $n$ elements $(a_1, a_2,\ldots ,a_n)$ in a fixed associative order $\otimes_{n}$. In particular, denote the multiplication  from left to right by
$$(a_1 a_2 \cdots a_n)_{L}:=(\cdots((a_1 a_2)a_3) \cdots )a_n,$$
and the multiplication  from  right to left by
$$(a_1 a_2 \cdots a_n)_{R}:=a_1(  \cdots (a_{n-2}(a_{n-1} a_n))\cdots ).$$

\begin{proposition}\label{no-order}
Let $a, a_1,a_2,  \ldots ,a_m\in \mathbb{A} $ and $(j_1,j_2,\ldots, j_k) \in \{1,2,\ldots,m\}^{k}$,  repetitions being allowed. Then the following sum is independent of the chosen associative order $\otimes_{(k+1)}$
\begin{eqnarray}\label{sum}
  \sum_{(i_1,i_2, \ldots, i_k)\in \sigma} (a_{i_1} a_{i_2} \cdots a_{i_k}a)_{\otimes_{(k+1)}},
   \end{eqnarray}
where the sum runs over  all distinguishable permutations $\sigma$ of $(j_1,j_2,\ldots, j_k)$. \\
Hence, in particular, we have
$$\sum_{(i_1,i_2, \ldots, i_k)\in \sigma} (a_{i_1}  a_{i_2} \cdots  a_{i_k}a)_L
=\sum_{(i_1,i_2, \ldots, i_k)\in \sigma} (a_{i_1}  a_{i_2} \cdots  a_{i_k}a)_{R}.$$
\end{proposition}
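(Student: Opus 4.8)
The plan is to prove the slightly stronger statement that the product is unchanged under an \emph{arbitrary} re-bracketing, working with the full symmetric group instead of distinguishable permutations: replacing the sum over distinguishable permutations of $(j_1,\dots,j_k)$ by the sum over all $\tau\in S_k$ only multiplies the expression by the order of the stabilizer of the multiset, so the two formulations of ``independence of $\otimes_{(k+1)}$'' are equivalent. Writing $y_\ell=x_{j_\ell}$, it therefore suffices to show that $\Phi(\otimes):=\sum_{\tau\in S_k}(y_{\tau(1)}\cdots y_{\tau(k)}\,a)_{\otimes}$ does not depend on the bracketing $\otimes$ of the $k+1$ factors. First I would use that any two full bracketings of $k+1$ ordered factors are joined by a finite chain of elementary re-associations (tree rotations), so that it is enough to compare two bracketings differing by one move $(XY)Z\rightsquigarrow X(YZ)$ applied to three consecutive sub-products $X,Y,Z$ sitting inside a common surrounding context. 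Since multiplication is $\R$-bilinear and each factor occurs exactly once, this context is an $\R$-linear map $G$ of the one slot it acts on; hence for each $\tau$ the two products differ precisely by $G_\tau\big([X_\tau,Y_\tau,Z_\tau]\big)$, and $\Phi(\otimes)-\Phi(\otimes')=\sum_{\tau\in S_k}G_\tau([X_\tau,Y_\tau,Z_\tau])$, which I must show vanishes.

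Next I would reduce this to a purely symmetrized associator. The map $G_\tau$ depends on $\tau$ only through the factors occupying the context positions, while $X_\tau,Y_\tau,Z_\tau$ depend only on those occupying the three blocks. Grouping the permutations by their restriction to the context positions freezes $G$ to a fixed linear map and leaves a fixed multiset $\mathcal L$ of factors to be distributed, in every possible way, among the block positions; pulling $G$ out by linearity, it suffices to prove the following \textbf{Key Lemma}: for blocks of prescribed sizes and fixed internal bracketings, $\sum[X,Y,Z]=0$, the sum running over all placements of $\mathcal L$ into the block positions. Equivalently, the total symmetrization over the free variables of any associator $[X,Y,Z]$ of three bracketed monomials (one slot possibly holding the fixed element $a$) is zero.

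To prove the Key Lemma I would argue by induction on the number $N$ of free variables, the two tools being the alternating property of $[\,\cdot\,,\cdot\,,\cdot\,]$ and the Teichmüller identity
\[
[wx,y,z]-[w,xy,z]+[w,x,yz]=w[x,y,z]+[w,x,y]z,
\]
which holds in every nonassociative algebra (check by expanding $[\,\cdot\,,\cdot\,,\cdot\,]=(\cdot\,\cdot)\cdot-\cdot(\cdot\,\cdot)$) and so is available here for free. The base case $N=3$ with three singleton slots is $\sum_{\sigma\in S_3}[b_{\sigma(1)},b_{\sigma(2)},b_{\sigma(3)}]=\big(\sum_\sigma \mathrm{sgn}\,\sigma\big)[b_1,b_2,b_3]=0$. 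For the inductive step, if the middle block is a product $Y=Y_1Y_2$, symmetrizing the Teichmüller identity annihilates its right-hand side — each term there is a single block times an associator on fewer than $N$ variables, whose symmetrization vanishes by the inductive hypothesis once the multiplying block is held fixed — leaving $\mathrm{Sym}[X,Y_1Y_2,Z]=\mathrm{Sym}[XY_1,Y_2,Z]+\mathrm{Sym}[X,Y_1,Y_2Z]$. Telescoping this reduces the middle block to a single variable, and using the antisymmetry of the associator to move blocks between slots and reapplying the same device collapses everything onto the genuinely three-term identity $\sum_{\mathrm{perm}}[p,qr,a]=0$; that last identity I would settle directly, by applying Teichmüller to $[p,qr,a]$ and summing over the six orderings of $(p,q,r)$, whereupon the context terms and the third-slot product term cancel by the pairing above and one is left with $\sum_{\mathrm{perm}}[p,qr,a]=\sum_{\mathrm{perm}}[pq,r,a]=-\sum_{\mathrm{perm}}[p,qr,a]$, forcing it to be $0$.

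The hard part is entirely the Key Lemma: non-associativity means that re-bracketing really does change each monomial, and only after summation over permutations do the discrepancies cancel. The delicate point I expect to fight is that the obvious sign-reversing involution — swapping two of the associator's blocks — is legitimate only when those blocks have equal size and identical internal bracketing, so the general case cannot be closed by symmetry alone; the Teichmüller identity is exactly the device that trades a product in one slot for products in the others and thereby drives the induction down to the alternating base identity. Once $\Phi(\otimes)$ is shown independent of $\otimes$, the displayed equality $\sum(\cdots)_L=\sum(\cdots)_R$ is just the special case comparing the two extreme (left-nested and right-nested) bracketings.
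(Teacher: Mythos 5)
Your proposal is correct in outline but takes a genuinely different, and much heavier, route than the paper. The paper disposes of the whole statement by a polarization trick: set $x=\sum_{i=1}^{k}t_i x_{j_i}$ with $t_i\in\mathbb{R}$; since the subalgebra generated by the two elements $x$ and $a$ is associative (Artin's theorem, via Proposition \ref{artin}), every bracketing yields $(\,x x\cdots x\,a)_{\otimes_{(k+1)}}=x^k a$, and the sum (\ref{sum}) is, up to the fixed factor $k_1!\cdots k_m!$, the coefficient of $t_1t_2\cdots t_k$ in the multilinear expansion of this product, hence independent of $\otimes_{(k+1)}$. Note that the same one-line trick proves your Key Lemma directly: $[x^p,x^q,x^r a]=0$ by Artin's theorem, and polarizing extracts exactly the symmetrized associator. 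So your entire Teichm\"{u}ller induction can be short-circuited. What your longer route buys is structural information the paper's argument hides: it localizes the cancellation at each elementary re-association (associahedron edge), identifies the vanishing of fully symmetrized associators as the underlying mechanism, and uses only the alternating property of the associator plus the universally valid Teichm\"{u}ller identity, without invoking Artin's theorem.

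Two points in your sketch need repair before the argument closes. First, the induction \emph{on the number $N$ of free variables} fails in an edge case your own setup allows: when the third slot of the associator is exactly the fixed element $a$, the Teichm\"{u}ller term $[w,x,y]z$ becomes $[X,Y_1,Y_2]\,a$, whose associator still carries all $N$ free variables, so your inductive hypothesis does not apply to it; the same problem recurs once alternation has rotated $a$ into other slots, producing context blocks with no free variables. The fix is to induct on the total number of factors (leaves) covered by the associator --- then both right-hand Teichm\"{u}ller terms strictly drop, since the context blocks $w$ and $z$ are always nonempty --- with an inner induction on the size of the middle block to run the telescoping, rotating blocks into the middle slot by alternation. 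Second, the ``collapsing'' step is vaguer than it should be: rotating blocks moves $a$ out of the last position, so besides $\sum_{\mathrm{perm}}[p,qr,a]=0$ you must enumerate the remaining small cases (e.g.\ $\mathrm{Sym}\,[p,q,ra]$ and associators with a slot equal to $a$ alone, such as $[u,a,v]$); all of these do vanish by alternation on the two free singleton slots, and your final computation $\sum_{\mathrm{perm}}[pq,r,a]=\sum_{\mathrm{perm}}[p,qr,a]=-\sum_{\mathrm{perm}}[p,qr,a]$ is verified correct. With these repairs your proof is valid, though markedly longer than the paper's.
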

\begin{proof}
Let $a\in \mathbb{A}$, $ x=\sum_{i=1}^{k}t_{i}a_{j_i}\in \mathbb{A}$, where $ t_{i}\in \mathbb{R}, j_i \in \{1,2,\ldots,m\}, i=1,2,\ldots,k$.
Observe  that, for a fixed associative order $\otimes_{(k+1)}$,
   the coefficient of $k_1!k_2!\cdots k_m!t_{1}t_{2}\cdots t_{k}$ in $(\underbrace{x x \cdots x}_{k} a)_{\otimes_{(k+1)}}$
  is given by the sum in (\ref{sum}), where $k_{\ell}$  is the appearing times of $\ell$ in $(j_1,j_2,\ldots, j_k)$, $\ell=1,\ldots,m$.

 For any associative order $\otimes_{(k+1)}$, it holds that  by Proposition  \ref{artin}
 $$(\underbrace{x x \cdots x}_{k} a)_{\otimes_{(k+1)}}=x^{k}a,$$
which means  the sum in (\ref{sum}) dees not depend on the associative order $\otimes_{(k+1)}$, which completes the proof.
\end{proof}
%The proof of Proposition \ref{no-order} follows closely the argument in the octonionic  case   \cite{Xu-Sabadini-25-o}.
In the octonionic setting,  Proposition \ref{no-order}  was obtained  in the  case of    $a=1$ for Fueter variables by induction  \cite{Liao-Li-11}.

 For  $\mathrm{k}=(k_1,\ldots,k_{m})\in\mathbb{N}^{m}$, denote $\mathrm{k}!:=  k_1!\cdots k_m!$ and $|\mathrm{k}|:=k_1+k_2+\cdots+k_{m}$.
Proposition \ref{no-order} allows to  construct   monogenic Fueter polynomials in the general alternative setting.

\begin{definition}\label{definition-Fueter}
For  $\mathrm{k} \in \mathbb{N}^{m}$, let   $\overrightarrow{\mathrm{k}}:=(j_1,j_2,\ldots, j_k)$   be an alignment where the number of $1$ in the alignment is $k_1$, the number of $2$ is $k_2$, and the number of $m$ is $k_{m}$, where $k=|\mathrm{k}|, 1\leq j_1\leq j_2\leq \ldots\leq j_k\leq m$. Define
$$\mathcal{P}_{\mathrm{k}}( x) = \frac{1}{k!}  \sum_{(i_1,i_2, \ldots, i_k)\in \sigma(\overrightarrow{\mathrm{k}})} z_{i_1}  z_{i_2} \cdots  z_{i_k},$$
where the sum runs over the $\dfrac{k!}{\mathrm{k}!}$ distinguishable   permutations $\sigma(\overrightarrow{\mathrm{k}})$ of $\overrightarrow{\mathrm{k}}$. When $\mathrm{k}=(0,\ldots,0)=\mathbf{0}$, we set $\mathcal{P}_{ \mathbf{0}}(x)=1$.
\end{definition}

\begin{proposition} \label{Fueter-Mono}
  For all $\mathrm{k} \in \mathbb{N}^{m}$, it holds that $\mathcal{P}_{\mathrm{k}} \in \mathcal{ M}^{L}(M, \mathbb{A})\cap \mathcal{ M}^{R}(M, \mathbb{A}).$
\end{proposition}

\begin{proof}
First, let us show   $\mathcal{P}_{\mathrm{k}} \in \mathcal{ M}^{L}(M, \mathbb{A})$. By Definition \ref{definition-Fueter} and Proposition \ref{no-order}, we have
\begin{eqnarray*}
 k!x_0  \overline{\partial}_{\mathcal{B}}  \mathcal{P}_{\mathrm{k}}( x)
  &=& x_0 \sum_{(i_1,i_2, \ldots, i_k)\in \sigma(\overrightarrow{\mathrm{k}}) } \Big(\partial_0 (z_{i_1}  z_{i_2} \cdots  z_{i_k})_{R} +
  \sum_{s=1}^{m}v_s\partial_{s}(z_{i_1}  z_{i_2} \cdots  z_{i_k})_{R} \Big)
    \\
   &=&x_0 \sum_{(i_1,i_2, \ldots, i_k)\in \sigma(\overrightarrow{\mathrm{k}}) } \Big(  -\sum_{\ell=1}^{k}(z_{i_1}  z_{i_2} \cdots  z_{i_{\ell-1}}  v_{i_{\ell}}   z_{i_{\ell+1}} \cdots z_{i_k})_{R} \\
  & &\quad \quad \quad\quad\quad\quad \quad\quad +\sum_{\ell=1}^{k} v_{i_\ell}(z_{i_1}  z_{i_2} \cdots  z_{i_{\ell-1}} 1  z_{i_{\ell+1}} \cdots z_{i_k})_{R} \Big)
  \\
   &= &  \sum_{\ell=1}^{k} \Big(   \sum_{(i_1,i_2, \ldots, i_k)\in \sigma(\overrightarrow{\mathrm{k}}) }   (z_{i_1}  z_{i_2} \cdots  z_{i_{\ell-1}}  (-x_0 v_{i_{\ell}}  ) z_{i_{\ell+1}} \cdots z_{i_k})_{R} \\
  &&\quad   \quad\quad  -  \sum_{(i_1,i_2, \ldots, i_k)\in \sigma(\overrightarrow{\mathrm{k}}) } (- x_0v_{i_\ell}) (z_{i_1}  z_{i_2} \cdots  z_{i_{\ell-1}}  z_{i_{\ell+1}} \cdots z_{i_k})_{R}\Big)
 \\
   &= & \sum_{\ell=1}^{k}  \Big(   \sum_{(i_1,i_2, \ldots, i_k)\in \sigma(\overrightarrow{\mathrm{k}}) }   (z_{i_1}  z_{i_2} \cdots  z_{i_{\ell-1}}  z_{i_{\ell}}   z_{i_{\ell+1}} \cdots z_{i_k})_{R} \\
  &&\quad\quad\quad-   \sum_{(i_1,i_2, \ldots, i_k)\in \sigma(\overrightarrow{\mathrm{k}}) } z_{i_\ell}  (z_{i_1}  z_{i_2} \cdots  z_{i_{\ell-1}}  z_{i_{\ell+1}} \cdots z_{i_k})_{R} \Big)
 \\
   &=&   \sum_{\ell=1}^{k} \Big(   \sum_{(i_1,i_2, \ldots, i_k)\in \sigma(\overrightarrow{\mathrm{k}}) }   (z_{i_1}  z_{i_2} \cdots  z_{i_{\ell-1}}  z_{i_{\ell}}   z_{i_{\ell+1}} \cdots z_{i_k})_{R} \\
  &&\quad\quad\quad -\sum_{(i_1,i_2, \ldots, i_k)\in \sigma(\overrightarrow{\mathrm{k}}) } ( z_{i_\ell}z_{i_1}  z_{i_2} \cdots  z_{i_{\ell-1}}  z_{i_{\ell+1}} \cdots z_{i_k})_{R} \Big)
  \\
  &=&0,
\end{eqnarray*}
where $\sigma(\overrightarrow{\mathrm{k}})$ is  as in Definition \ref{definition-Fueter} and the forth equality follows from the fact
$$ \big( z_{i_1}  z_{i_2} \cdots  z_{i_{\ell-1}} x_{i_\ell}   z_{i_{\ell+1}} \cdots z_{i_k} \big)_R
=  \big( x_{i_\ell}   z_{i_1}  z_{i_2} \cdots  z_{i_{\ell-1}}   z_{i_{\ell+1}} \cdots z_{i_k} \big)_R, \ x_{i_\ell}\in \mathbb{R}. $$
Hence, we get the conclusion $\mathcal{P}_{\mathrm{k}} \in \mathcal{ M}^{L}(M, \mathbb{A})$. Similarly, we can prove the conclusion $\mathcal{P}_{\mathrm{k}} \in \mathcal{ M}^{R}(M, \mathbb{A})$ in which we should consider  the   multiplication in  $\mathcal{P}_{\mathrm{k}}$ from  right to left.  The proof is complete.
\end{proof}

In order to show that $\mathcal{P}_{\mathrm{k}}(x)\in M$ for all $x\in M$, we resort to  a  version of Cauchy-Kovalevskaya extension  starting from some real analytic functions defined in a domain in $\mathbb{R}^{m}$.  For simplicity,  we consider  only the case of polynomials defined in $\mathbb R^{m}$, which is enough for our purpose.
\begin{definition}[CK-extension]\label{Cauchy-Kovalevska-extension}
Let $f_{0}:\mathbb{R}^{m} \to \mathbb{A}$ be a  polynomial. Define the  left  Cauchy-Kovalevskaya extension (CK-extension, for short)  $CK[f_{0}]:  M\to \mathbb{A}$ by
\begin{eqnarray}\label{series-CK-left}
 CK[f_{0}]( x)=CK^{L}[f_{0}]( x) ={\rm exp} (-x_0 \underline{\overline{\partial}}_{\mathcal{B}}) f_{0}(\underline{x})
=\sum_{k=0}^{+\infty} \frac{(-x_0)^{k}}{k!} \big( ( \underline{\overline{\partial}}_{\mathcal{B}} )^{k}f_{0}(\underline{x})\big),
\end{eqnarray}
where $\underline{\overline{\partial}}_{\mathcal{B}}=  \sum_{s=1}^{m}v_s\partial_{s}$.
Similarly, define the  right  Cauchy-Kovalevskaya extension   $CK^{R}[f_{0}]:  M\to \mathbb{A}$ by
$$CK^{R}[f_{0}]( x) =f_{0}(\underline{x}){\rm exp} (-x_0 \underline{\overline{\partial}}_{\mathcal{B}})
=\sum_{k=0}^{+\infty} \frac{(-x_0)^{k}}{k!}\big(f_{0}(\underline{x})( \underline{\overline{\partial}}_{\mathcal{B}} )^{k}\big).
$$
\end{definition}
It should be pointed that $CK[f_{0}]$ is well-defined. The first issue is that there is no non-associativity from Proposition \ref{artin}  since every term in the series  (\ref{series-CK-left}) has only two elements
$\underline{\overline{\partial}}_{\mathcal{B}}$ and $ f_{0}(\underline{x})$. The second issue  is  that the series  (\ref{series-CK-left}) is indeed  a finite sum when $f_0$ is a polynomial. Furthermore, we can rewrite $CK[f_{0}]$ as
\begin{eqnarray}\label{series-CK-two}
CK[f_{0}](x)=
\sum_{k=0}^{+\infty} \frac{x_0^{2k}}{(2k)!} \big(  \Delta_{m}^{k} f_{0}(\underline{x})\big)
-  \sum_{k=0}^{+\infty} \frac{x_0^{2k+1}}{(2k+1)!}\big( \Delta_{m} ^{k} \underline{\overline{\partial}}_{\mathcal{B}} f_{0}(\underline{x})\big),\end{eqnarray}
where $ \Delta_{m}=  \sum _{s=1}^{m} \partial_{s}^{2}$.

\begin{theorem}\label{CK-slice-monogenic}
Let $f_{0}:  \mathbb{R}^{m} \to \mathbb{A}$ be a  polynomial. Then $CK^{L}[f_0]$ (resp. $CK^{R}[f_0]$)   is the unique extension  of $f_{0}$ to $M$ which is left (resp. right)  monogenic. In particular, if $f_{0}$ is $\mathbb{R}$-valued, then $CK^{L}[f_0]= CK^{R}[f_0]$ take both values in  $ M$.
\end{theorem}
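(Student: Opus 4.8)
The plan is to verify four things for the left extension $CK^{L}[f_0]$, the right case being entirely symmetric: that it restricts to $f_0$ on $\mathbb{R}^m=\{x_0=0\}$, that it is left monogenic, that it is the \emph{only} such extension, and finally the special features of the $\mathbb{R}$-valued case. Throughout I would exploit that $f_0$ is a polynomial, so every series in Definition \ref{Cauchy-Kovalevska-extension} is a finite sum and all term-by-term differentiations are legitimate. The extension property is immediate: setting $x_0=0$ in (\ref{series-CK-left}) annihilates every summand except $k=0$, leaving $f_0(\underline{x})$.

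For monogenicity I would write $\overline{\partial}_{\mathcal{B}}=\partial_0+\underline{\overline{\partial}}_{\mathcal{B}}$ (recall $v_0=1$) and differentiate (\ref{series-CK-left}) term by term. The key point, and the place where one must respect the absence of associativity, is that each summand is a \emph{real} scalar $(-x_0)^k/k!$ times the single algebra element $(\underline{\overline{\partial}}_{\mathcal{B}})^{k}f_0(\underline{x})$. Since the real factor associates and commutes with everything (Proposition \ref{real}) and $\partial_s$ for $s\ge1$ annihilates the $x_0$-factor, applying $\underline{\overline{\partial}}_{\mathcal{B}}$ merely raises the operator power by one, $\underline{\overline{\partial}}_{\mathcal{B}}\big(\tfrac{(-x_0)^k}{k!}(\underline{\overline{\partial}}_{\mathcal{B}})^{k}f_0\big)=\tfrac{(-x_0)^k}{k!}(\underline{\overline{\partial}}_{\mathcal{B}})^{k+1}f_0$, with no reassociation required because only iterated application of the single operator is involved. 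Applying $\partial_0$ shifts the index in the opposite direction, and the two resulting sums telescope to $0$.

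For uniqueness, let $g\colon M\to\mathbb{A}$ be any left monogenic extension of $f_0$. By Proposition \ref{GSM-Harm} the function $g$ is real analytic, so it is determined near $\mathbb{R}^m$ by its derivatives there. The monogenicity equation reads $\partial_0 g=-\underline{\overline{\partial}}_{\mathcal{B}}g$, and since $\partial_0$ commutes with $\underline{\overline{\partial}}_{\mathcal{B}}$ an induction gives $\partial_0^{k}g=(-1)^{k}(\underline{\overline{\partial}}_{\mathcal{B}})^{k}g$; restricting to $x_0=0$ (which commutes with the tangential operator $\underline{\overline{\partial}}_{\mathcal{B}}$) yields $\partial_0^{k}g|_{x_0=0}=(-1)^{k}(\underline{\overline{\partial}}_{\mathcal{B}})^{k}f_0$, exactly the coefficients of $CK^{L}[f_0]$. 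Hence $h:=g-CK^{L}[f_0]$ is left monogenic and every derivative $\partial_0^{k_0}\partial_1^{k_1}\cdots\partial_m^{k_m}h$ vanishes on $\{x_0=0\}$, since tangential derivatives preserve the vanishing of $\partial_0^{k_0}h|_{x_0=0}$. Real analyticity then makes $h$ vanish on a ball, and Proposition \ref{Identity-theorem-monogenic-harmonic} forces $h\equiv0$ on the connected set $M$.

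Finally, suppose $f_0$ is $\mathbb{R}$-valued. For the $M$-valuedness I would use the rewriting (\ref{series-CK-two}): since $\Delta_{\mathcal{B}}$ is a real scalar operator, $(\Delta_{\mathcal{B}})^{k}f_0$ is again $\mathbb{R}$-valued, while $(\Delta_{\mathcal{B}})^{k}\underline{\overline{\partial}}_{\mathcal{B}}f_0=\sum_{s=1}^{m}v_s\,(\Delta_{\mathcal{B}})^{k}\partial_s f_0$ lies in $\mathrm{span}_{\mathbb{R}}(v_1,\dots,v_m)$; thus $CK^{L}[f_0]$ takes values in $\mathbb{R}\oplus\mathrm{span}_{\mathbb{R}}(v_1,\dots,v_m)=M$. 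For the identity $CK^{L}[f_0]=CK^{R}[f_0]$, I would compare the two series degree by degree in $x_0$: for fixed $k$ the homogeneous part is $\sum_{1\le s_1,\dots,s_k\le m}(\partial_{s_1}\cdots\partial_{s_k}f_0)$ times a product of the units $v_{s_1},\dots,v_{s_k}$, associated to the right in $CK^{L}$ and to the left in $CK^{R}$; the real derivative factor may be pulled out, and Proposition \ref{no-order} (with $a=1$) says precisely that the sum over all distinguishable orderings of such a product is independent of the bracketing, so the two nested sums agree term by term. The main obstacle throughout is exactly this non-associativity: it forbids a naive symbolic manipulation of the operator powers, and it is circumvented by keeping every product reduced to a real scalar times one algebra element in Steps 1--3, and by invoking the order-independence of symmetrized products (Proposition \ref{no-order}) together with the even/odd rewriting (\ref{series-CK-two}) in the last step.
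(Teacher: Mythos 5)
Your proof is correct. The extension and monogenicity steps coincide with the paper's argument: the paper performs the same telescoping computation, splitting $\overline{\partial}_{\mathcal{B}}=\partial_0+\underline{\overline{\partial}}_{\mathcal{B}}$ and shifting indices, with non-associativity defused exactly as you say (each summand of (\ref{series-CK-left}) is a real scalar times a single element $(\underline{\overline{\partial}}_{\mathcal{B}})^{k}f_0(\underline{x})$, so only Propositions \ref{real} and \ref{artin} are needed). Where you genuinely diverge is uniqueness: the paper disposes of it in one line by noting that two monogenic extensions agree on $\mathbb{R}^m$, an $m$-dimensional smooth submanifold of $M$, and citing the identity theorem (Theorem \ref{Identity-theorem}); you instead re-derive the relevant special case by hand, using $\partial_0 g=-\underline{\overline{\partial}}_{\mathcal{B}}g$ and the induction $\partial_0^k g=(-1)^k(\underline{\overline{\partial}}_{\mathcal{B}})^k g$ to pin down all normal derivatives of $g$ on $\{x_0=0\}$, then concluding from real analyticity (Proposition \ref{GSM-Harm}) and Proposition \ref{Identity-theorem-monogenic-harmonic}. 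Your route is longer but more self-contained --- it bypasses the manifold identity theorem (whose proof via Theorem \ref{Identity-lemma} needed a rank argument and invertibility in $M$) and makes explicit why $CK^{L}[f_0]$ is the canonical Taylor reconstruction in $x_0$; the paper's citation buys brevity at the cost of relying on that earlier machinery. For the final claim the paper simply reads everything off from (\ref{series-CK-left}), whereas your treatment via (\ref{series-CK-two}) for $M$-valuedness and Proposition \ref{no-order} for $CK^{L}[f_0]=CK^{R}[f_0]$ is correct but heavier than necessary: once $f_0$ is $\mathbb{R}$-valued, $(\underline{\overline{\partial}}_{\mathcal{B}})^{2k}f_0$ is $(-1)^k$ times an iterated tangential Laplacian of $f_0$ (hence real) and $(\underline{\overline{\partial}}_{\mathcal{B}})^{2k+1}f_0=\sum_{s=1}^{m}v_s\,\partial_s(\cdot)$ has real components, so every coefficient of the series lies in $M$ and commutes with the real derivatives, giving $CK^{L}[f_0]=CK^{R}[f_0]$ directly without invoking the symmetrized-product machinery.
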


\begin{proof}
Here we only consider the left case. From   Proposition \ref{artin}, we have
\begin{eqnarray*}
  \overline{\partial}_{\mathcal{B}} CK[f_{0}](x)& =&
  \partial_0 \sum_{k=0}^{+\infty} \frac{(-x_0)^{k}}{k!} \big( ( \underline{\overline{\partial}}_{\mathcal{B}} )^{k}f_{0}(\underline{x})\big)
  +\overline{\underline{\partial}}_{\mathcal{B}} \sum_{k=0}^{+\infty}  \frac{(-x_0)^{k}}{k!} \big( ( \underline{\overline{\partial}}_{\mathcal{B}} )^{k}f_{0}(\underline{x})\big)
  \\
 &=& \sum_{k=1}^{+\infty} -\frac{(-x_0)^{k-1}}{(k-1)!} \big( ( \underline{\overline{\partial}}_{\mathcal{B}} )^{k}f_{0}(\underline{x})\big)
 + \sum_{k=0}^{+\infty} \frac{(-x_0)^{k }}{ k !} \big( ( \underline{\overline{\partial}}_{\mathcal{B}} )^{k+1}f_{0}(\underline{x})\big)
 \\
 &=& -\sum_{k=0}^{+\infty}  \frac{(-x_0)^{k }}{k!} \big( ( \underline{\overline{\partial}}_{\mathcal{B}} )^{k+1}f_{0}(\underline{x})\big)
 + \sum_{k=0}^{+\infty} \frac{(-x_0)^{k }}{ k !} \big( ( \underline{\overline{\partial}}_{\mathcal{B}} )^{k+1}f_{0}(\underline{x})\big)
 \\
&=&  0,
\end{eqnarray*}
i.e. $CK[f_{0}]\in \mathcal{M}^{L}(M, \mathbb{A})$.
Furthermore, Theorem \ref{Identity-theorem} gives the uniqueness of extension. Finally,   if $f_{0}$ is $\mathbb{R}$-valued, then $CK^{L}[f_0]= CK^{R}[f_0]$ take both values in  $ M$ from (\ref{series-CK-two}). The proof is
complete.
\end{proof}

\begin{remark}\label{CK-real-special-case}
If   $f_0$ is  $\mathbb{R}$-valued, then $CK[f_{0}] (x)=\sum_{s=0}^m \phi_s(x) v_s$ satisfies
 (\ref{i-j-E-1}).
\end{remark}

For  $\mathrm{k}=(k_1,\ldots,k_{m})\in\mathbb{N}^{m}$ and $\underline{x}=(x_1, \ldots, x_m)\in\mathbb{ R}^{m}$, denote
$\underline{x}^{\mathrm{k}}=x_1^{k_1}\ldots x_m^{k_m}$.

\begin{proposition} \label{Ck-P}
For  $\mathrm{k}\in\mathbb{N}^{m}$, it holds that
$$CK[\underline{x}^{\mathrm{k}}]= \mathrm{k}!\mathcal{P}_{ \mathrm{k}}\in \mathcal{M}^{L}(M, M) \cap \mathcal{M}^{R}(M, M).$$
\end{proposition}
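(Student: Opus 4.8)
The plan is to recognize $\mathrm{k}!\,\mathcal{P}_{\mathrm{k}}$ as the unique left monogenic extension of the real monomial $\underline{x}^{\mathrm{k}}$ and then read off the remaining properties from the Cauchy--Kovalevskaya machinery. First I would collect the two ingredients already in hand: on one side, $\mathcal{P}_{\mathrm{k}}$ is both left and right monogenic on $M$ (the monogenicity of the Fueter polynomials established above); on the other side, Theorem \ref{CK-slice-monogenic} tells us that $CK[\underline{x}^{\mathrm{k}}]$ is \emph{the} unique left monogenic function on $M$ whose restriction to the hyperplane $\{x_0=0\}\cong\mathbb{R}^{m}$ equals $\underline{x}^{\mathrm{k}}$. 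So the whole matter reduces to checking that $\mathrm{k}!\,\mathcal{P}_{\mathrm{k}}$ is an extension of $\underline{x}^{\mathrm{k}}$ in this sense.

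The only genuine computation is thus the restriction of $\mathcal{P}_{\mathrm{k}}$ to $\{x_0=0\}$. On this hyperplane each Fueter variable degenerates to the real scalar $z_{\ell}(x)=x_{\ell}-x_0 v_{\ell}=x_{\ell}$, so every summand $z_{i_1}z_{i_2}\cdots z_{i_k}$ appearing in Definition \ref{definition-Fueter} collapses to an ordinary product of real numbers, for which order and association are irrelevant. Since the multiset of indices is always $\overrightarrow{\mathrm{k}}$ regardless of the permutation $\sigma$, each such product equals $x_1^{k_1}\cdots x_m^{k_m}=\underline{x}^{\mathrm{k}}$; as the sum ranges over the $k!/\mathrm{k}!$ distinct permutations, I obtain $\mathcal{P}_{\mathrm{k}}(x)\big|_{x_0=0}=\frac{1}{k!}\cdot\frac{k!}{\mathrm{k}!}\,\underline{x}^{\mathrm{k}}=\underline{x}^{\mathrm{k}}/\mathrm{k}!$, whence $\mathrm{k}!\,\mathcal{P}_{\mathrm{k}}$ restricts to $\underline{x}^{\mathrm{k}}$ on $\{x_0=0\}$. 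I would stress here that non-associativity, the recurring difficulty of the paper, causes no trouble at this step precisely because the Fueter variables are real on the restriction, while away from the restriction we never manipulate the products directly, instead leaning entirely on the uniqueness theorem.

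Putting the pieces together, $\mathrm{k}!\,\mathcal{P}_{\mathrm{k}}$ is a left monogenic extension of $\underline{x}^{\mathrm{k}}$, so the uniqueness clause of Theorem \ref{CK-slice-monogenic} forces $\mathrm{k}!\,\mathcal{P}_{\mathrm{k}}=CK[\underline{x}^{\mathrm{k}}]$; alternatively one may observe that both functions are left monogenic and agree on the $m$-dimensional manifold $\{x_0=0\}$ and invoke the Identity theorem \ref{Identity-theorem}. For the final membership claim, since $\underline{x}^{\mathrm{k}}$ is $\mathbb{R}$-valued, the last assertion of Theorem \ref{CK-slice-monogenic} gives $CK^{L}[\underline{x}^{\mathrm{k}}]=CK^{R}[\underline{x}^{\mathrm{k}}]$ with values in $M$; this simultaneously supplies right monogenicity and $M$-valuedness, yielding $CK[\underline{x}^{\mathrm{k}}]=\mathrm{k}!\,\mathcal{P}_{\mathrm{k}}\in\mathcal{M}^{L}(M,M)\cap\mathcal{M}^{R}(M,M)$. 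The argument is essentially bookkeeping once the restriction identity is in place, so the main point to get right is the combinatorial factor $k!/\mathrm{k}!$ and the careful verification that the extension property demanded by the uniqueness statement is exactly the one supplied by the restriction computation.
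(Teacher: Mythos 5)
Your proposal is correct and follows essentially the same route as the paper: both identify $\mathrm{k}!\,\mathcal{P}_{\mathrm{k}}$ as a monogenic function agreeing with $CK[\underline{x}^{\mathrm{k}}]$ on the hyperplane $\{x_0=0\}$, invoke the Identity Theorem (equivalently, the uniqueness clause of Theorem \ref{CK-slice-monogenic}) to conclude equality on $M$, and obtain $M$-valuedness and right monogenicity from the $\mathbb{R}$-valued case of that theorem. The only difference is that you spell out the restriction computation with the combinatorial factor $k!/\mathrm{k}!$, which the paper merely asserts via ``$\mathrm{k}!\mathcal{P}_{\mathrm{k}}(\underline{x})=\underline{x}^{\mathrm{k}}$''; your version is a slightly more complete write-up of the same argument.
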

\begin{proof}
By  Proposition  \ref{Fueter-Mono} and Theorem \ref{CK-slice-monogenic}, we have $\mathcal{P}_{ \mathrm{k}} \in \mathcal{M}^{R}(M, \mathbb{A})\cap \mathcal{M}^{R}(M, \mathbb{A})$ and $CK[\underline{x}^{\mathrm{k}}] \in \mathcal{M}^{L}(M, M)\cap \mathcal{M}^{R}(M, M)$.
Observing that $\mathrm{k}!\mathcal{P}_{\mathrm{k}}  ( \underline{x})=\underline{x}^{\mathrm{k}}=CK[\underline{x}^{\mathrm{k}}](0+\underline{x})$,  the identity  theorem in Theorem \ref{Identity-theorem} shows $CK[\underline{x}^{\mathrm{k}}] \equiv \mathrm{k}!\mathcal{P}_{ \mathrm{k}}$ on $M$.   The proof is complete.
\end{proof}

From Remark \ref{CK-real-special-case}, Proposition  \ref{Ck-P},  and  Lemma \ref{E-lemma}, we have
\begin{lemma}\label{pa-monogenic}
For $\mathrm{k}\in \mathbb{N}^{m},$ we have $  \mathcal{P}_{\mathrm{k}}  (x) a  \in \mathcal{M}^{L}(M,  \mathbb{A})$ and  $ a \mathcal{P}_{\mathrm{k}}  (x) \in \mathcal{M}^{R}(M, \mathbb{A})$ for all $a\in \mathbb{A}$.
\end{lemma}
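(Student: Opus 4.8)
The plan is to establish the two claims of Lemma \ref{pa-monogenic} by combining the monogenicity of the Fueter polynomials $\mathcal{P}_{\mathrm{k}}$ (already available from Proposition \ref{Ck-P}) with the ``pulling-out'' identity of Lemma \ref{E-lemma}. The key observation is that Lemma \ref{E-lemma} says precisely: if a function $\phi \in C^1(\Omega, M)$ has its imaginary components $\phi_1, \ldots, \phi_m$ satisfying the symmetry condition (\ref{i-j-E-1}), namely $\partial_s \phi_t = \partial_t \phi_s$ for $1 \leq s, t \leq m$, then $\overline{\partial}_{\mathcal{B}}(\phi a) = (\overline{\partial}_{\mathcal{B}} \phi) a$ for every $a \in \mathbb{A}$. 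So the whole matter reduces to checking that $\mathcal{P}_{\mathrm{k}}$, as a function taking values in $M$, verifies the hypothesis of Lemma \ref{E-lemma}, and then invoking that the right-hand side $(\overline{\partial}_{\mathcal{B}} \mathcal{P}_{\mathrm{k}}) a$ vanishes because $\mathcal{P}_{\mathrm{k}}$ is already left monogenic.

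First I would record that $\mathcal{P}_{\mathrm{k}}$ is $M$-valued. This is exactly the content of Proposition \ref{Ck-P}, which states $\mathcal{P}_{\mathrm{k}} = \frac{1}{\mathrm{k}!} CK[\underline{x}^{\mathrm{k}}] \in \mathcal{M}^{L}(M, M) \cap \mathcal{M}^{R}(M, M)$; in particular $\mathcal{P}_{\mathrm{k}}$ has values in $M$, so that we may write $\mathcal{P}_{\mathrm{k}}(x) = \sum_{s=0}^{m} \phi_s(x) v_s$ with real-valued components $\phi_s$, placing us in the exact situation of Lemma \ref{E-lemma}. Next I would verify the symmetry (\ref{i-j-E-1}) for these components. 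Here is where Remark \ref{CK-real-special-case} does the work: since $\underline{x}^{\mathrm{k}}$ is an $\mathbb{R}$-valued polynomial, the remark tells us that $CK[\underline{x}^{\mathrm{k}}](x) = \sum_{s=0}^{m} \phi_s(x) v_s$ satisfies (\ref{i-j-E-1}), and dividing by the real scalar $\mathrm{k}!$ preserves this, so $\mathcal{P}_{\mathrm{k}}$ satisfies (\ref{i-j-E-1}) as well.

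With both ingredients in place, Lemma \ref{E-lemma} applies to $\phi = \mathcal{P}_{\mathrm{k}}$ and yields, for every $a \in \mathbb{A}$,
$$\overline{\partial}_{\mathcal{B}}(\mathcal{P}_{\mathrm{k}} a) = (\overline{\partial}_{\mathcal{B}} \mathcal{P}_{\mathrm{k}}) a = 0,$$
the final equality being the left monogenicity of $\mathcal{P}_{\mathrm{k}}$ from Proposition \ref{Ck-P}. This gives $\mathcal{P}_{\mathrm{k}}(x) a \in \mathcal{M}^{L}(M, \mathbb{A})$. For the right-monogenic claim $a \mathcal{P}_{\mathrm{k}}(x) \in \mathcal{M}^{R}(M, \mathbb{A})$, I would run the mirror-image argument: one needs the right-handed analogue of Lemma \ref{E-lemma}, which follows by the same associator computation with $[v_s, v_t, a]$ replaced by $[a, v_s, v_t]$ (equivalently, reading the multiplication from right to left), together with the right monogenicity of $\mathcal{P}_{\mathrm{k}}$, again supplied by Proposition \ref{Ck-P}.

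The argument is short and essentially bookkeeping once the right lemmas are lined up, so I do not anticipate a genuine obstacle; the only subtlety to be careful about is the non-associativity. Specifically, the whole point of phrasing the result through Lemma \ref{E-lemma} is that one must \emph{not} naively write $\mathcal{P}_{\mathrm{k}} a$ as monogenic by treating the scalar $a$ as freely commuting with $\overline{\partial}_{\mathcal{B}}$ — this is false in general, and the associator $[\overline{\partial}_{\mathcal{B}}, \mathcal{P}_{\mathrm{k}}, a]$ is exactly what could obstruct it. It is the symmetry condition (\ref{i-j-E-1}), verified via Remark \ref{CK-real-special-case}, that forces this associator to vanish. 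Thus the main point to be careful about is to invoke Lemma \ref{E-lemma} rather than an associativity shortcut, and for the right case to make sure the correct (right-multiplicative) version of the pulling-out identity is used.
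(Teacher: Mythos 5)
Your proof is correct and takes exactly the paper's route: the paper derives this lemma directly from Remark \ref{CK-real-special-case} (the components of $CK[\underline{x}^{\mathrm{k}}]$ satisfy the symmetry condition (\ref{i-j-E-1})), Proposition \ref{Ck-P} ($\mathcal{P}_{\mathrm{k}}$ is $M$-valued and two-sided monogenic), and Lemma \ref{E-lemma} (the pulling-out identity), which are precisely your three ingredients assembled in the same order. Your explicit mirror-image treatment of the right-monogenic case via the right-handed analogue of Lemma \ref{E-lemma} fills in a detail the paper leaves implicit, and it is sound since the associator is alternating.
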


\begin{lemma}\label{lemmakth-E}
Let $k\in \mathbb{N}$ and  $P: M \rightarrow \mathbb{A}$ be a  homogeneous polynomial  of degree $k$.
Then we have
 $$  P\in \mathcal{M}^{L}(M, \mathbb{A}) \Leftrightarrow  P(x)= \sum_{|\mathrm{k}|=k} \mathcal{P}_{\mathrm{k}}  (x) a_{\mathrm{k}},\quad  a_{\mathrm{k}}= \partial_{\mathrm{k}} P(0), \mathrm{k}\in \mathbb{N}^{m}. $$
\end{lemma}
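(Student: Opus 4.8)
The plan is to establish the two implications separately; the reverse one is immediate and the forward one carries the whole argument. For the reverse direction, assume $P(x)=\sum_{|\mathrm{k}|=k}\mathcal{P}_{\mathrm{k}}(x)a_{\mathrm{k}}$. By Lemma \ref{pa-monogenic} each summand $\mathcal{P}_{\mathrm{k}}(x)a_{\mathrm{k}}$ lies in $\mathcal{M}^{L}(M,\mathbb{A})$, and since $\overline{\partial}_{\mathcal{B}}$ is $\mathbb{R}$-linear the finite sum is again left monogenic; the specific value of $a_{\mathrm{k}}$ is irrelevant here.

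For the forward direction I would reduce everything to the hyperplane $H=\{x_{0}=0\}$, i.e. to $\underline{x}=\sum_{s=1}^{m}x_{s}v_{s}$, which is an $m$-dimensional linear (hence smooth) submanifold of $M$. On $H$ the Fueter variables collapse to $z_{\ell}(0,\underline{x})=x_{\ell}$, so from Definition \ref{definition-Fueter}, together with the facts that $\overrightarrow{\mathrm{k}}$ admits $k!/\mathrm{k}!$ distinguishable permutations and that each reordered real monomial equals $\underline{x}^{\mathrm{k}}$, I obtain the key identity $\mathcal{P}_{\mathrm{k}}(0,\underline{x})=\frac{1}{\mathrm{k}!}\underline{x}^{\mathrm{k}}$. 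Setting $a_{\mathrm{k}}:=\partial_{\mathrm{k}}P(0)$ and $Q:=\sum_{|\mathrm{k}|=k}\mathcal{P}_{\mathrm{k}}a_{\mathrm{k}}$, this gives $Q(0,\underline{x})=\sum_{|\mathrm{k}|=k}\frac{1}{\mathrm{k}!}\underline{x}^{\mathrm{k}}a_{\mathrm{k}}$.

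Next I would match this against the restriction of $P$. Since $P$ is homogeneous of degree $k$, setting $x_{0}=0$ annihilates every monomial with a positive power of $x_{0}$ and leaves a homogeneous polynomial $P(0,\underline{x})=\sum_{|\mathrm{k}|=k}\underline{x}^{\mathrm{k}}b_{\mathrm{k}}$ of degree $k$ in $\underline{x}$. Applying the purely spatial operator $\partial_{\mathrm{k}}=\partial_{x_{1}}^{k_{1}}\cdots\partial_{x_{m}}^{k_{m}}$ and evaluating at the origin forces the surviving monomial to have vanishing $x_{0}$-power and spatial multidegree exactly $\mathrm{k}$, whence $\partial_{\mathrm{k}}P(0)=\mathrm{k}!\,b_{\mathrm{k}}$, i.e. $b_{\mathrm{k}}=\frac{1}{\mathrm{k}!}a_{\mathrm{k}}$. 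Because $\underline{x}^{\mathrm{k}}$ is real and hence associates and commutes freely with the constants (Proposition \ref{real}), this yields $P(0,\underline{x})=Q(0,\underline{x})$, so $P$ and $Q$ coincide on $H$. Finally, $P$ is left monogenic by hypothesis and $Q$ is left monogenic by Lemma \ref{pa-monogenic}, so the identity theorem (Theorem \ref{Identity-theorem}) promotes their agreement on the $m$-dimensional manifold $H$ to the global identity $P\equiv Q$ on $M$, which is precisely the asserted representation with $a_{\mathrm{k}}=\partial_{\mathrm{k}}P(0)$.

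I expect the only delicate point to be the bookkeeping in the previous paragraph identifying $\partial_{\mathrm{k}}P(0)$ with $\mathrm{k}!$ times the hyperplane Taylor coefficient $b_{\mathrm{k}}$; this is where homogeneity is genuinely used, since it guarantees that the restriction $P(0,\underline{x})$ is purely of degree $k$ and that the spatial multi-derivative isolates a single coefficient. The potential obstruction from non-associativity—which would arise if one tried to pull the constants $a_{\mathrm{k}}$ through the Cauchy–Kovalevskaya operator in order to invoke the uniqueness statement of Theorem \ref{CK-slice-monogenic} directly—is entirely circumvented by comparing restrictions to $H$ and appealing to the identity theorem instead.
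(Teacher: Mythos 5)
Your proof is correct, but the necessity direction follows a genuinely different route from the paper's. The paper argues constructively and purely algebraically: combining Euler's relation $\sum_{s=0}^m x_s\partial_s P=kP$ with $\overline{\partial}_{\mathcal{B}}P=0$ gives $kP(x)=\sum_{s=1}^m z_s\,\partial_s P(x)$; iterating this $k$ times yields $P(x)=\frac{1}{k!}\sum_{i_1,\ldots,i_k}\big(z_{i_1}\cdots z_{i_k}\,\partial^k P(x)\big)_{R}$, and Proposition \ref{no-order} is then invoked to trade the right-nested products for left-nested ones so that the constants $\partial_{\mathrm{k}}P(x)=\partial_{\mathrm{k}}P(0)$ can be factored out of the non-associative products, giving the representation directly. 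You instead restrict to the hyperplane $\{x_0=0\}$, where $z_\ell$ collapses to the real variable $x_\ell$ and $\mathcal{P}_{\mathrm{k}}(0,\underline{x})=\underline{x}^{\mathrm{k}}/\mathrm{k}!$, verify that homogeneity forces $\partial_{\mathrm{k}}P(0)=\mathrm{k}!\,b_{\mathrm{k}}$ (your bookkeeping here is sound: any monomial with a positive power of $x_0$ has spatial degree $<k$ and is annihilated by the order-$k$ spatial operator $\partial_{\mathrm{k}}$), and then upgrade agreement of $P$ and $Q:=\sum_{|\mathrm{k}|=k}\mathcal{P}_{\mathrm{k}}a_{\mathrm{k}}$ on this $m$-dimensional smooth manifold to $P\equiv Q$ on $M$ via Theorem \ref{Identity-theorem}, $Q$ being monogenic by Lemma \ref{pa-monogenic}. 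This restriction-plus-uniqueness device is exactly how the paper itself proves $CK[\underline{x}^{\mathrm{k}}]=\mathrm{k}!\,\mathcal{P}_{\mathrm{k}}$, so your argument fits the paper's Cauchy--Kovalevskaya philosophy even though it bypasses the paper's proof of this particular lemma. The trade-offs: your route avoids Proposition \ref{no-order} and all associativity bookkeeping (as you correctly anticipate), and is shorter; the paper's route is elementary and self-contained at the algebraic level --- it does not lean on the identity theorem, whose proof ultimately rests on the analytic facts (via the Cauchy formula) that monogenic functions are harmonic and real analytic --- and it produces the pointwise intermediate identity $P(x)=\sum_{|\mathrm{k}|=k}\mathcal{P}_{\mathrm{k}}(x)\,\partial_{\mathrm{k}}P(x)$, which is of independent use. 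The sufficiency direction is handled identically in both proofs (Lemma \ref{pa-monogenic} plus $\mathbb{R}$-linearity of $\overline{\partial}_{\mathcal{B}}$).
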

\begin{proof}
By Lemma \ref{pa-monogenic}, the sufficiency  can be obtained directly.  Now let us consider the  necessity. Let the polynomial $P\in \mathcal{M}^{L}(M, \mathbb{A})$ be homogeneous of degree $k$.  We have
 $$\sum_{s=0}^m x_s\partial_{s} P(x)= k P(x),$$
 and
 $$\sum _{s=0}^{m}v_{s} \partial_s P(x)=0,$$
which imply
 $$ k P(x)=\sum_{s=0}^m (x_s- x_0 v_s)\partial_{s} P(x)=\sum_{s=1}^m z_s \partial_{s} P(x).$$
Now we iterate the procedure for polynomials $\partial_{s}P$, $s=1,2,\ldots, m,$ which are monogenic  by Proposition \ref{preserving-monogenic} and are homogeneous of degree $(k-1)$.  Hence,
  \begin{eqnarray*}
P(x)& =&\frac{1}{k}\sum_{s=1}^m z_s (\partial_{s} P)(x)
  \\
 &=&\frac{1}{k(k-1)}  \sum_{s,t=1}^m z_s (z_t (\partial_{s,t} P)(x)) \\
 & &\cdots \\
&=& \frac{1}{k! }  \sum_{i_1,\ldots, i_k=1}^m \Big( z_{i_1}\ldots z_{i_k}\frac{\partial^k}{\partial_{x_{i_1}}\ldots \partial_{x_{i_k}}}P(x)\Big)_{R}.
\end{eqnarray*}
Let $\mathrm{k}=(k_1,\ldots,k_{m})\in \mathbb{N}^{m}$ with $k=|\mathrm{k}|$. Grouping all the derivatives of the form $$\partial_{\mathrm{k}} P(x)=\frac{\partial^{k}  }{\partial_{x_{1}}^{k_1}\cdots\partial_{x_{m}}^{k_m} } P(x),$$ we get
   \begin{eqnarray*}
P(x)& =&\frac{1}{k! }  \sum_{i_1,\ldots, i_k=1}^m \Big(z_{i_1}\ldots z_{i_k}\frac{\partial^k}{\partial_{x_{i_1}}\ldots \partial_{x_{i_k}}}P(x) \Big)_{R}
  \\
 &=&\frac{1}{k! } \sum_{|\mathrm{k}|=k}  \sum_{(i_1,i_2, \ldots, i_k)\in \sigma(\overrightarrow{\mathrm{k}})  }
  (z_{i_1}  z_{i_2} \cdots  z_{i_k}  \partial_{\mathrm{k}} P(x))_{R} \\
 &= &\frac{1}{k! } \sum_{|\mathrm{k}|=k}  \sum_{(i_1,i_2, \ldots, i_k)\in \sigma(\overrightarrow{\mathrm{k}})  }
  (z_{i_1}  z_{i_2} \cdots  z_{i_k}  \partial_{\mathrm{k}} P(x))_{L}\\
&=& \frac{1}{k! } \sum_{|\mathrm{k}|=k} \Big( \sum_{(i_1,i_2, \ldots, i_k)\in  \sigma(\overrightarrow{\mathrm{k}})   }
  (z_{i_1}  z_{i_2} \cdots  z_{i_k})_{L} \Big)\partial_{\mathrm{k}} P(x)\\
&=& \sum_{|\mathrm{k}|=k} \mathcal{P}_{\mathrm{k}}(x)\partial_{\mathrm{k}}P(x)\\
&=& \sum_{|\mathrm{k}|=k} \mathcal{P}_{\mathrm{k}}(x) \partial_{\mathrm{k}} P(0),
\end{eqnarray*}
 where  $\sigma(\overrightarrow{\mathrm{k}})$ denotes  all distinguishable permutations  of $\overrightarrow{\mathrm{k}}$ as in Definition \ref{definition-Fueter}, the third equality follows from Proposition \ref{no-order}, and last equality follows from the fact that $P$ has degree $k$.
  The proof is complete.
\end{proof}

%%%%%%%%%%%%%%%%%%%%%%%%%%%%%%%%%%%%%%%%%%%%%

To obtain  the  Taylor series expansion of monogenic functions over real alternative $\ast$-algebras, we formulate  a  technical result, instead of  \cite[Theorem 3.18]{Ghiloni-Stoppato-24-2} which is valid for the  associative  case.
 For $\rho >0$, denote $B(\rho)=\{ x \in M: | x|<\rho\}$.

\begin{lemma} \label{Taylor-lemma-E}
Given $y \in M\setminus \{0\}$, it holds that for all $ x\in B(|y|)$ and  $a\in \mathbb{A}$
$$E_{ y}( x)a=  \sum_{k=0}^{+\infty}\Big( \sum_{|\mathrm{k}|=k} \mathcal{P}_{\mathrm{k}}  ( x) (\mathcal{Q}_{ \mathrm{k}}  ( y)a)\Big), $$
where the series converges normally on  $ B(|y|)$.
\end{lemma}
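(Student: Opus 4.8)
The plan is to regard $F(x):=E_{y}(x)a=E(y-x)a$ as a function of $x$, to expand it into homogeneous monogenic components, and to identify these components through Lemma \ref{lemmakth-E}. First I would check that $F$ is left monogenic on the ball $B(|y|)$. Since $x^{c}\in M$ whenever $x\in M$, the kernel $E$ is $M$-valued on $M\setminus\{0\}$, so $E_{y}\in C^{1}(M\setminus\{y\},M)$; moreover $\overline{\partial}_{\mathcal{B}}[E(y-x)]=-(\overline{\partial}_{\mathcal{B}}E)(y-x)=0$ by Example \ref{Cauchy-kernel-example}, and the chain rule carries the symmetry (\ref{Eij}) of the components of $E$ over to those of $E_{y}$, so that $E_{y}$ satisfies (\ref{i-j-E-1}). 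Hence Lemma \ref{E-lemma} gives $\overline{\partial}_{\mathcal{B}}(E_{y}(x)a)=(\overline{\partial}_{\mathcal{B}}E_{y}(x))a=0$ for every $a\in\mathbb{A}$. Because $0\in B(|y|)$ and $x\neq y$ throughout $B(|y|)$, the function $F$ is left monogenic, hence harmonic and real analytic, on $B(|y|)$ by Proposition \ref{GSM-Harm}.

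Next I would decompose $F=\sum_{k=0}^{\infty}F_{k}$ into its homogeneous Taylor components at the origin, $F_{k}$ being the part of total degree $k$. As $\overline{\partial}_{\mathcal{B}}$ has constant coefficients and lowers the degree of homogeneity by one, the relation $\overline{\partial}_{\mathcal{B}}F=0$ forces $\overline{\partial}_{\mathcal{B}}F_{k}=0$ for each $k$, so every $F_{k}$ is a homogeneous left monogenic (hence harmonic) polynomial of degree $k$ and Lemma \ref{lemmakth-E} yields $F_{k}(x)=\sum_{|\mathrm{k}|=k}\mathcal{P}_{\mathrm{k}}(x)\,\partial_{\mathrm{k}}F_{k}(0)$. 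To compute the coefficients, note that $\partial_{\mathrm{k}}F_{j}(0)=0$ for $j\neq k$ when $|\mathrm{k}|=k$, so $\partial_{\mathrm{k}}F_{k}(0)=\partial_{\mathrm{k}}F(0)$; since $a$ is constant this equals $\big(\partial_{\mathrm{k}}E_{y}\big)(0)\,a$, and differentiating $x\mapsto E(y-x)$ gives $\partial_{\mathrm{k}}E_{y}(x)=(-1)^{|\mathrm{k}|}(\partial_{\mathrm{k}}E)(y-x)$, whence $\big(\partial_{\mathrm{k}}E_{y}\big)(0)=(-1)^{|\mathrm{k}|}(\partial_{\mathrm{k}}E)(y)=\mathcal{Q}_{\mathrm{k}}(y)$. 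Therefore $\partial_{\mathrm{k}}F_{k}(0)=\mathcal{Q}_{\mathrm{k}}(y)a$, which in the non-associative setting must be kept as the single bracketed element $(\mathcal{Q}_{\mathrm{k}}(y)a)$, so that $F_{k}(x)=\sum_{|\mathrm{k}|=k}\mathcal{P}_{\mathrm{k}}(x)(\mathcal{Q}_{\mathrm{k}}(y)a)$. Summing over $k$ gives the asserted formula.

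The delicate point, and the one I expect to be the main obstacle, is the normal convergence on the \emph{full} ball $B(|y|)$. A crude majorization using $|z_{\ell}(x)|=\sqrt{x_{0}^{2}+x_{\ell}^{2}}\le|x|$ and Proposition \ref{norm} bounds $|\mathcal{P}_{\mathrm{k}}(x)|$ by $C^{|\mathrm{k}|}|x|^{|\mathrm{k}|}/\mathrm{k}!$, but the corresponding bound on the kernel derivatives $\mathcal{Q}_{\mathrm{k}}(y)$ carries an extra geometric factor which, together with the constant $C\ge1$ of Proposition \ref{norm}, only secures convergence on a possibly smaller ball. To reach the boundary I would instead exploit that each $F_{k}$ is a solid harmonic: being homogeneous and harmonic, $F_{k}$ restricts to a spherical harmonic of degree $k$, and the $L^{2}(\partial B(r))$-orthogonality of distinct degrees together with the interior estimate for harmonic functions yields $\sup_{\overline{B(\rho)}}|F_{k}|\le C_{m}(1+k)^{m}(\rho/r)^{k}\sup_{\partial B(r)}|F|$ for all $\rho<r<|y|$. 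Summing this geometric majorant and letting $r\to|y|$ gives uniform absolute convergence on every compact subset of $B(|y|)$, i.e.\ normal convergence; equivalently, the sharp radius can be read off from the scalar Gegenbauer expansion of the Newtonian kernel $|x-y|^{-(m-1)}$, to which $E(x-y)$ is related by applying $\overline{\partial}_{\mathcal{B}}$ termwise. The algebraic identification of the coefficients above is then routine, provided the bracketing dictated by non-associativity is respected.
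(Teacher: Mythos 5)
Your argument is correct, and it takes a genuinely different route from the paper for the analytic core of the lemma. The coefficient identification is shared: both proofs reduce, degree by degree, to Lemma \ref{lemmakth-E}, and both rely on Lemma \ref{E-lemma} (with the symmetry \eqref{Eij}) to move the constant $a$ past the operator. But the paper never expands $E_y a$ abstractly: for $m>1$ it writes $E(y-x)a=\frac{1}{m-1}\,\partial_{x,\mathcal{B}}\bigl(|y-x|^{1-m}a\bigr)$, substitutes the normally convergent Taylor (Gegenbauer) series $|y-x|^{1-m}=\sum_{k}\frac{(-1)^{k}}{k!}\langle x,\nabla_{y}\rangle^{k}|y|^{1-m}$, obtains explicit homogeneous terms $P_{k}(x,y)$ that are monogenic by Lemma \ref{E-lemma} and whose normal convergence on $B(|y|)$ is inherited from the scalar series, and computes $\partial_{x,\mathrm{k}}P_{k}(0,y)=\sigma_{m}\,\mathcal{Q}_{\mathrm{k}}(y)a$; the case $m=1$ is treated separately, since the factor $1/(m-1)$ degenerates. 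You instead prove first that $F=E_{y}(\cdot)a$ is left monogenic, hence harmonic and real analytic (Proposition \ref{GSM-Harm}), take the abstract expansion of a harmonic function into homogeneous harmonic polynomials, check $\overline{\partial}_{\mathcal{B}}F_{k}=0$ degreewise (valid, since termwise differentiation is legitimate and homogeneous parts are unique, so $\overline{\partial}_{\mathcal{B}}F_{k}$ is a polynomial vanishing identically), and identify $\partial_{\mathrm{k}}F_{k}(0)=\partial_{\mathrm{k}}F(0)=\mathcal{Q}_{\mathrm{k}}(y)a$ directly from the definition of $\mathcal{Q}_{\mathrm{k}}$ --- arguably cleaner than the paper's computation, and with no special case at $m=1$. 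On convergence, you correctly flag that crude term bounds using the constant $C\geq 1$ of Proposition \ref{norm} only secure a smaller ball, and your fix --- $L^{2}(\partial B(r))$-orthogonality of distinct degrees applied componentwise to the real components of $F$ (each harmonic), sup-norm bounds growing polynomially in $k$ beaten by the geometric factor $(\rho/r)^{k}$, then $r\to|y|$ --- is a standard, valid proof of normal convergence on all of $B(|y|)$, and it delivers the convergence for the series grouped by total degree, which is exactly what the statement asserts. The trade-off: the paper's construction is self-contained given the scalar ``fundamental formula'' and produces the convergent series directly, while yours outsources the convergence to classical spherical-harmonics theory but avoids the kernel manipulations; as you note at the end, the two are reconciled by the fact that the paper's expansion of the Newtonian kernel is precisely the Gegenbauer expansion from which the sharp radius can be read off.
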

\begin{proof}
 For $m=1$, the conclusion follows directly from the  holomorphic result.
Let $m>1$. Given $y \in M\setminus \{0\}$, for all $a\in \mathbb{A}$ and $x\in B(|y|)$, we have by Proposition \ref{real}
$$\frac{(y-x)^{c}}{|y-x|^{m+1}} a  =  -\frac{1}{m-1}  \Big(\partial_{y,\mathcal{B}}  \frac{1}{|y-x|^{m-1}} \Big)a=-\frac{1}{m-1}  \partial_{y,\mathcal{B}}   \Big( \frac{a}{|y-x|^{m-1}} \Big).
 $$
Note that    the fundamental formula holds
$$ \frac{1}{|y-x|^{m-1}}=\sum_{k=0}^{\infty} \frac{(-1)^{k}}{k!} \langle x, \nabla_{y}\rangle^{k}   \frac{1}{|y|^{m-1}},$$
where   the series converges normally on  $B(|y|)$ and
$ \langle x, \nabla_{y}\rangle = \sum_{s=0}^{m}x_s\partial_{y_s}$,\\
Hence, we get
$$ \frac{(y-x)^{c}}{|y-x|^{m+1}} a =\sum_{k=0}^{\infty}P_{k}(x, y), $$
where the homogeneous polynomial $P_{k}( \cdot, y)$  of degree $k$  given by
$$P_{k}(x, y)=- \frac{(-1)^{k}}{k! (m-1)} \partial_{y,\mathcal{B}}  \big(\langle x, \nabla_{y} \rangle^{k} \frac{a}{|y|^{m-1}}\big)
=\frac{(-1)^{k}}{k!  }   \langle x, \nabla_{y} \rangle^{k} \frac{y^{c}}{|y|^{m+1}}a$$
 is left monogenic in the variable $x\in B(|y|)$  since the   polynomial $ \langle \cdot, \nabla_{y} \rangle^{k} \frac{y^{c}}{|y|^{m+1}}$ satisfies (\ref{i-j-E-1}) and then by Lemma \ref{E-lemma}
 \begin{eqnarray*}
\overline{\partial}_{x, \mathcal{B}} P_{k}(x, y) & =&\frac{(-1)^{k}}{k!  }   \Big(\overline{\partial}_{x, \mathcal{B}} \big(\langle x, \nabla_{y} \rangle^{k} \frac{y^{c}}{|y|^{m+1}}\big)\Big)a
  \\
 &=&\frac{(-1)^{k}}{(k-1)!  }   \Big(\langle x, \nabla_{y} \rangle^{k-1} \big(\overline{\partial}_{y, \mathcal{B}}\frac{y^{c}}{|y|^{m+1}}\big)\Big)a \\
 &= &0.
\end{eqnarray*}

Observing  that, for  $\mathrm{k}=(k_1,\ldots,k_{m})\in\mathbb{N}^{m}$ with $|\mathrm{k}|=k$,
$$\partial_{x, \mathrm{k}} P_{k}(0,  y)=(-1)^{k}   \partial_{ y, \mathrm{k}}   \frac{y^{c}}{| y|^{m+1}}a=\sigma_{m} \mathcal{Q}_{ \mathrm{k}}  ( y)a,\quad $$
we get from Lemma \ref{lemmakth-E}    the desired conclusion
$$E_{y}(x)a= \frac{1}{\sigma_{m}}\sum_{k=0}^{+\infty} P_{k}(x, y) = \sum_{k=0}^{+\infty}\Big( \sum_{|\mathrm{k}|=k} \mathcal{P}_{\mathrm{k}}  (x) (\mathcal{Q}_{ \mathrm{k}}  (y)a)\Big),$$
where the series converges normally on  $ B(|y|)$.
The proof is complete.
\end{proof}

Now we can establish the  Taylor series expansion,  which subsumes \cite[Theorem 1]{Li-Peng-01}  and  \cite[Theorem 11.3.4]{Brackx}. Note that this  Taylor series  is different from \cite[Theorem 6.16]{Xu-Sabadini-25-o} where an additional associator term appears.
\begin{theorem} \label{Taylor-lemma-T}
Let $\rho>0$ and $ f:B(\rho) \rightarrow \mathbb{A}$ be a   monogenic function.   Then,   for all $x\in B(\rho)$,
$$f( x)=  \sum_{k=0}^{+\infty} \Big( \sum_{|\mathrm{k}|=k} \mathcal{P}_{\mathrm{k}}  ( x)\partial_{ \mathrm{k}}  f(0) \Big) , \quad \mathrm{k}=(k_1,\ldots,k_{m})\in \mathbb{N}^{m},$$
where the series converges normally on  $ B(\rho)$.
\end{theorem}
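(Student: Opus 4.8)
The plan is to combine the Cauchy integral representation of $f$ (Theorem \ref{Cauchy-slice}) with the normally convergent series expansion of the Cauchy kernel obtained in Lemma \ref{Taylor-lemma-E}, and then to recognize the integrals that appear as the coefficients $\partial_{\mathrm{k}} f(0)$ via Corollary \ref{Cauchy-slice-high}. Because associativity is unavailable, the guiding principle throughout is to never re-associate: every bracketing must be carried exactly as it is produced by those lemmas.

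First I would fix $x \in B(\rho)$ and choose a radius $r$ with $|x| < r < \rho$, so that $\overline{B(r)} \subset B(\rho)$ and $f \in \mathcal{M}(\overline{B(r)}, \mathbb{A})$. Theorem \ref{Cauchy-slice} then gives
$$f(x) = \int_{\partial B(r)} E_y(x)\big(n(y) f(y)\big) \, dS(y).$$
For each $y \in \partial B(r)$ one has $|y| = r > |x|$, hence $x \in B(|y|)$, and Lemma \ref{Taylor-lemma-E} applies with the single element $a = n(y) f(y)$:
$$E_y(x)\big(n(y) f(y)\big) = \sum_{k=0}^{+\infty} \Big( \sum_{|\mathrm{k}|=k} \mathcal{P}_{\mathrm{k}}(x)\big(\mathcal{Q}_{\mathrm{k}}(y)(n(y) f(y))\big) \Big).$$
Substituting this into the Cauchy formula, interchanging the integral with the finite inner sum and the outer series, and using the left-linearity of the surface integral (Proposition \ref{absolute-boundary}(iii)) to pull out the $y$-independent factor $\mathcal{P}_{\mathrm{k}}(x)$, I would arrive at
$$f(x) = \sum_{k=0}^{+\infty} \sum_{|\mathrm{k}|=k} \mathcal{P}_{\mathrm{k}}(x) \int_{\partial B(r)} \mathcal{Q}_{\mathrm{k}}(y)\big(n(y) f(y)\big) \, dS(y).$$
Finally, Corollary \ref{Cauchy-slice-high} on the domain $B(r) \ni 0$ (with the first multi-index entry equal to $0$) identifies the inner integral as exactly $\partial_{\mathrm{k}} f(0)$, giving the asserted expansion.

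The hard part will be justifying the term-by-term integration, which demands convergence that is uniform not only in $x$ on compact subsets of $B(r)$ but also in the integration variable $y$ ranging over $\partial B(r)$. I would establish this by a Weierstrass $M$-test. Using Proposition \ref{norm} together with the homogeneity of $\mathcal{P}_{\mathrm{k}}$ of degree $|\mathrm{k}|$ and of $\mathcal{Q}_{\mathrm{k}} = (-1)^{|\mathrm{k}|}\partial_{\mathrm{k}} E$ of degree $-(m + |\mathrm{k}|)$, the $k$-th block is bounded on $\partial B(r)$, after integration over the sphere, by a polynomial-in-$k$ multiple of $(|x|/r)^k$; since $|x|/r < 1$, summation yields the required geometric domination. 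The second point to watch, as already noted, is that the extraction of $\mathcal{P}_{\mathrm{k}}(x)$ is legitimate only because Lemma \ref{Taylor-lemma-E} groups $\mathcal{Q}_{\mathrm{k}}(y)(n(y) f(y))$ as a single right factor of $\mathcal{P}_{\mathrm{k}}(x)$, which matches verbatim the integrand of Corollary \ref{Cauchy-slice-high}; no re-association is performed, and this is precisely why no extra associator term appears here, in contrast with the partial-slice expansion of \cite{Xu-Sabadini-25-o}. The same geometric estimate furnishes the normal convergence of the final series on compact subsets of $B(\rho)$.
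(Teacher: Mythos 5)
Your proposal follows essentially the same route as the paper's own proof: the Cauchy formula of Theorem \ref{Cauchy-slice} on a sphere $\partial B(r)$ with $|x|<r<\rho$, the kernel expansion of Lemma \ref{Taylor-lemma-E} with $a=n(y)f(y)$, term-by-term integration, and identification of the coefficients via Corollary \ref{Cauchy-slice-high}, with the same care never to re-associate the bracketing $\mathcal{P}_{\mathrm{k}}(x)\bigl(\mathcal{Q}_{\mathrm{k}}(y)(n(y)f(y))\bigr)$. Your explicit Weierstrass $M$-test justifying the interchange uniformly in $y\in\partial B(r)$ is in fact slightly more detailed than the paper, which invokes the normal convergence from Lemma \ref{Taylor-lemma-E} without spelling out the uniformity in the integration variable.
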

\begin{proof}
Let $f\in\mathcal {M}(B(\rho), \mathbb{A})$.   By Theorem \ref{Cauchy-slice}, it holds that, for all  $x\in B(r)$ with  $ r<\rho$,
\begin{equation*}
f(x)=\int_{\partial B(r) }  E_{y}(x) (n(y)f(y)) dS(y),
 \end{equation*}
where  $n(y)=y/r$ is the unit exterior normal to $\partial B(r)$ at $y$,
 $dS$  stands  for  the   classical   Lebesgue surface  element  in $\mathbb{R}^{m+1}$.\\
Hence, by Lemma \ref{Taylor-lemma-E} and   Corollary \ref{Cauchy-slice-high}, we get
 \begin{eqnarray*}
 f(x) & =& \int_{\partial B(r)}  \sum_{k=0}^{+\infty}  \Big( \sum_{|\mathrm{k}|=k} \mathcal{P}_{\mathrm{k}}  (x) \big(\mathcal{Q}_{ \mathrm{k}}  (y) (n(y)f(y)) \big)  \Big)   dS(y)
  \\
 &=&\sum_{k=0}^{+\infty} \Big( \sum_{|\mathrm{k}|=k} \mathcal{P}_{\mathrm{k}}  (x)\int_{\partial B(r)} \big(\mathcal{Q}_{ \mathrm{k}}  (y) (n(y)f(y)) \big) dS(y) \Big)\\
 &= &\sum_{k=0}^{+\infty} \Big( \sum_{|\mathrm{k}|=k} \mathcal{P}_{\mathrm{k}}  ( x)\partial_{ \mathrm{k}}  f(0) \Big),
\end{eqnarray*}
where $\mathrm{k}=(k_1,\ldots,k_{m})\in \mathbb{N}^{m}$.
  The proof is complete.
\end{proof}
\par
\noindent
\textbf{Acknowledgements}
 The authors are grateful to the referees for their careful reading of
the paper and valuable suggestions and comments, which improve the quality of this paper significantly.\\
\noindent
\textbf{Declarations}
\\
\textbf{Author contributions}
All authors have contributed equally to all aspects of this manuscript and have reviewed its final draft.
\\
\textbf{Conflict of interest}
There is no financial or non-financial interests that are directly or indirectly related to the work submitted for publication.
\\
\textbf{Data availability}
Data sharing is not applicable to this article as no datasets were generated  during the current study.

%Denote by $\mathcal {M}_k(\Omega, \mathbb{A})$ or $\mathcal {M}_k^{L}(\Omega, \mathbb{A})$  (resp. $\mathcal {M}_k^{R}(\Omega, \mathbb{A})$)   the function class of  all  left  (resp. right) poly-monogenic functions  of order $k$ $f:\Omega \rightarrow \mathbb{A}$.
%%%%%%%%%%%%%%%       Reference   %%%%%%%%%%%%%%%%%%%%%%%

%%%%%%%%%%%%%%%

%%%%%%%%%%%%%%%%%%%%%%%%%%%%%%%%%%%%%%%%%%%%%%%%%%%%%%%%%

\vskip 10mm
\end{document}